\numberwithin{equation}{section}
\theoremstyle{plain}
\newtheorem{theorem}{Theorem}
\newtheorem{proposition}[theorem]{Proposition}
\newtheorem{remark}[theorem]{Remark}
\begin{document}

\begin{frontmatter}
\title{Large deviations and applications for Markovian Hawkes processes with a large initial intensity}
\runtitle{Gao and Zhu}

\begin{aug}
\author{\fnms{Xuefeng} \snm{Gao}\thanksref{a}\ead[label=e1]{xfgao@se.cuhk.edu.hk}}
\and
\author{\fnms{Lingjiong} \snm{Zhu}\thanksref{b}\ead[label=e2]{zhu@math.fsu.edu}}
%\and
%\author{\fnms{Third} \snm{Author}\thanksref{b}%
%\ead[label=e3]{third@somewhere.com}%
%\ead[label=u1,url]{www.foo.com}}

\address[a]{Department of Systems
    Engineering and Engineering Management, The Chinese University of Hong Kong, Shatin, N.T. Hong Kong.
\printead{e1}}

\address[b]{Department of Mathematics, Florida State University, 1017 Academic Way, Tallahassee, FL-32306, United States of America.
\printead{e2}}

\runauthor{Gao and Zhu}

%\affiliation{Some University and Another University}

\end{aug}

\begin{abstract}
Hawkes process is a class of simple point processes that is self-exciting
and has clustering effect. The intensity of this point process depends on its
entire past history. It has wide applications in finance, insurance, neuroscience, social networks, criminology, seismology, and many
other fields. In this paper, we study linear Hawkes process with an exponential kernel in the asymptotic regime where the initial intensity of the Hawkes process is large.
We establish large deviations for Hawkes processes in this regime as well as the
regime when both the initial intensity and the time are large.
We illustrate the strength of our results
by discussing the applications to insurance and queueing systems.
\end{abstract}

%\begin{keyword}
%\kwd{sample}
%\kwd{\LaTeXe}
%\end{keyword}

\end{frontmatter}

\section{Introduction}
Let $N$ be a simple point process on $\mathbb{R}$ and let $\mathcal{F}^{-\infty}_{t}:=\sigma(N(C),C\in\mathcal{B}(\mathbb{R}), C\subset(-\infty,t])$ be
an increasing family of $\sigma$-algebras. Any nonnegative $\mathcal{F}^{-\infty}_{t}$-progressively measurable process $\lambda_{t}$ with
\begin{equation*}
\mathbb{E}\left[N(a,b]|\mathcal{F}^{-\infty}_{a}\right]=\mathbb{E}\left[\int_{a}^{b}\lambda_{s}ds\big|\mathcal{F}^{-\infty}_{a}\right], \quad \text{almost surely},
\end{equation*}
for all intervals $(a,b]$ is called an $\mathcal{F}^{-\infty}_{t}$-intensity of $N$. We use the notation $N_{t}:=N(0,t]$ to denote the number of
points in the interval $(0,t]$.

A Hawkes process is a simple point process $N$ admitting an $\mathcal{F}^{-\infty}_{t}$-intensity
\begin{equation}
\lambda_{t}:=\lambda\left(\int_{-\infty}^{t-}\phi(t-s)dN_{s}\right),\label{dynamics}
\end{equation}
where $\lambda(\cdot):\mathbb{R}^{+}\rightarrow\mathbb{R}^{+}$ is locally integrable, left continuous,
$\phi(\cdot):\mathbb{R}^{+}\rightarrow\mathbb{R}^{+}$ and
we always assume that $\Vert \phi\Vert_{L^{1}}=\int_{0}^{\infty}\phi(t)dt<\infty$.
In \eqref{dynamics}, $\int_{-\infty}^{t-}\phi(t-s)dN_{s}$ stands for $\sum_{\tau<t}\phi(t-\tau)$, where
$\tau$ are the occurrences of the points before time $t$. In the literature, $\phi(\cdot)$ and $\lambda(\cdot)$ are usually referred to
as exciting function (or sometimes kernel function) and rate function respectively.
A Hawkes process is linear if $\lambda(\cdot)$ is linear and it is nonlinear otherwise.

The linear Hawkes process was first introduced by A.G. Hawkes in 1971 \cite{Hawkes, Hawkes71II}. It naturally generalizes the Poisson process and it
captures both the self--exciting\footnote{Self--exciting refers the phenomenon that the occurrence of one event increases the probability of the occurrence of further
events.} property and the clustering effect. In addition, Hawkes process is a very versatile model which is amenable to statistical analysis. These explain why it has wide applications in insurance, finance, social networks, neuroscience,
criminology and many other fields.
For a list of references, we refer to \cite{ZhuThesis}.

%The linear Hawkes process was proposed by Alan Hawkes \cite{Hawkes} in 1971.
%The more general nonlinear Hawkes process was first studied in Br\'{e}maud and Massouli\'{e} \cite{Bremaud}.

Throughout this paper, we assume an exponential exciting function $\phi(t):=\alpha e^{-\beta t}$ where $\alpha,\beta>0$, and a linear rate function
$\lambda(z):=\mu+z$ where the base intensity $\mu\geq 0$. That is, we restrict ourselves
to the linear Markovian Hawkes process. To see the Markov property, we define
\[Z_{t}:=\int_{-\infty}^{t}\alpha e^{-\beta(t-s)}dN_{s} = Z_0 \cdot e^{-\beta t} + \int_{0}^{t}\alpha e^{-\beta(t-s)}dN_{s} .\]
Then, the process $Z$ is Markovian and satisfies the dynamics:
\begin{equation*}\label{eq:markov-Z}
dZ_{t}=-\beta Z_{t}dt+\alpha dN_{t},
\end{equation*}
where $N$ is a Hawkes process with intensity $\lambda_t = \mu + Z_{t-}$ at time $t$. In addition, the pair $(Z, N)$ is also Markovian. For simplicity, we also assume $Z_0=Z_{0-}$, i.e., there is no jump at time zero.

In this paper we consider an asymptotic regime where $Z_{0}=n$, and $n \in \mathbb{R}^{+}$ is sent to infinity. This implies the initial intensity $\lambda_0 = \mu+Z_0$ is large for fixed $\mu$.
Our main contribution is to provide the large deviations analysis
of Markovian Hawkes processes in this asymptotic regime as well as the regime when both $Z_0$ and the time are large.
%We derive large deviations for both the intensity process $Z$ and the counting process $N$.
%We also provide large deviations for the intensity and counting processes when both $Z_0$ and the time are large.
The rate functions are found explicitly. Our large deviations analysis here complement our previous results in \cite{GZ}, where we establish
functional law of large numbers and functional central limit theorems
for Markovian Hawkes processes in the same asymptotic regimes.

%We derive limit theorems (functional law of large numbers and functional central limit theorems) for linear Markovian Hawkes processes in this asymptotic regime as well as the regime when both the initial intensity and the time are large. We do not study here the non--Markovian case which may require a different approach since knowing the value of $Z_0$ is not
%sufficient to describe the future evolution of a non--Markovian Hawkes process.

For simplicity, the discussions in our paper are restricted to the case when the exciting function $\phi$
is exponential, that is the Markovian case. Indeed, all the results can be extended to the case
when the exciting function $\phi$ is a sum of exponential functions.
And for the non--Markovian case, we know that any continuous
and integrable function $\phi$ can be approximated by a sum of exponential functions, see e.g. \cite{ZhuI}.
In this respect, the Markovian setting in this paper is not too restrictive.
From the application point of view, the exponential exciting function
and thus the Markovian case, together with the linear rate function,
is the most widely used due to the tractability of the theoretical analysis
as well as the simulations and calibrations. See, e.g., \cite{AJ, Ait, Dassio13, Hawkes} and the references therein.

To illustrate the strength of our results, we apply them to two examples.
 In the first example, we develop approximations for finite--horizon ruin probabilities in the insurance setting where claim arrivals are modeled by Hawkes processes. Here, the initial arrival rate of claims could be high, say, right after a catastrophe event. In the second example, we rely on our large deviations results to approximate the loss probability in a multi--server queueing system where the traffic input is given by a Hawkes process with a large initial intensity. Such a queueing system could be relevant for modeling large scale service systems (e.g., server farms with thousands of servers) with high--volume traffic which exhibits clustering.

We now explain the difference between our work and the existing literature on limit theorems of Hawkes processes, especially the large deviations.
The large--time large deviations of Hawkes processes have been extensively studied
in the literature, that is the large deviation principle for $\mathbb{P}(N_{t}/t\in\cdot)$ as $t\rightarrow\infty$.
Bordenave and Torrisi \cite{Bordenave} derived the large deviations
when $\lambda(\cdot)$ is linear and obtained a closed-form formula for the rate function.
When $\lambda(\cdot)$ is nonlinear, the lack of immigration-birth representation (\cite{Hawkes}) makes
the study of large deviations much more challenging mathematically.
In the case when $\phi(\cdot)$ is exponential, the large deviations were obtained in Zhu \cite{ZhuI}
by using the Markovian property, and $\lambda(\cdot)$ is assumed to be sublinear
so that a delicate application of minmax theorem can match the lower and upper bounds.
For the general non-Markovian case, i.e., general $\phi(\cdot)$, the large deviations was obtained at the process-level in Zhu \cite{ZhuII}. The large deviations for extensions of Hawkes processes have also
been studied in the literature, see e.g. Karabash and Zhu \cite{KarabashZhu}
for the linear marked Hawkes process, and Zhu \cite{ZhuCIR} for the Cox--Ingersoll-Ross process with Hawkes jumps
and also Zhang et al. \cite{Zhang} for affine point processes. Other than the large deviations, the central limit theorems
for linear, nonlinear and extensions of Hawkes processes have been considered in, e.g., \cite{Bacry, ZhuCLT, ZhuCIR}.
Recently, Torrisi \cite{TorrisiI,TorrisiII} studied the rate of convergence
in the Gaussian and Poisson approximations of the simple point processes with stochastic intensity,
which includes as a special case, the nonlinear Hawkes process.
The moderate deviations for linear Hawkes processes were obtained in Zhu \cite{ZhuMDP},
that fills in the gap between the central limit theorem and large deviations.
Also, the large--time limit theorems for
nearly unstable, or nearly critical Hawkes processes have been considered in Jaisson and Rosenbaum \cite{JR,JRII}.
The large--time asymptotics for other regimes are referred to Zhu \cite{ZhuThesis}.
The limit theorems considered in Bacry et al. \cite{Bacry} hold for the multidimensional Hawkes process. Indeed,
one can also consider the large dimensional asymptotics for the Hawkes process, that is, mean-field limit,
see e.g. Delattre et al. \cite{Delattre}.

We organize our paper as follows. In Section \ref{TheorySec}, we will state the main theoretical results in our paper, i.e.,
the large deviations for the linear Markovian Hawkes processes with a large initial intensity.
We will then discuss the applications of our results to two examples in Section \ref{sec:app}. We prove Theorems~\ref{thm:1} and \ref{thm:2} in Section \ref{ProofsSec}.
 Technical proofs for additional results will be presented in the online appendix due to space considerations.

\section{Main results}\label{TheorySec}

In this section we state our main results.
First, let us introduce the notation that will be used throughout the paper and introduce
the definition and the contraction principle in the large deviations theory
that will be used repeatedly in the paper.

\subsection{Notation and background of large deviations theory}

We define $\mathbb{R}^{+}=\{x\in\mathbb{R}: x>0\}$
and $\mathbb{R}_{\geq 0}=\{x\in\mathbb{R}:x\geq 0\}$.
We fix $T>0$ throughout this paper. Let us first define the following spaces:

\begin{itemize}
\item
$D[0,T]$ is defined as the space of c\`{a}dl\`{a}g functions from $[0,T]$ to $\mathbb{R}_{\geq 0}$.
\item
$\mathcal{AC}_{x}[0,T]$ is defined as the space
of absolutely continuous functions from $[0,T]$ to $\mathbb{R}_{\geq 0}$ that starts at $x$ at time $0$.
\item
$\mathcal{AC}_{x}^{+}[0,T]$ is defined as the space that
consists of all the non-decreasing functions $f\in\mathcal{AC}_{x}[0,T]$.
\end{itemize}

We also define $B_{\epsilon}(x)$ as the Euclidean ball centered at $x$ with radius $\epsilon>0$.

Before we proceed, let us give a formal definition of the large deviation principle and state the contraction principle.
We refer readers to Dembo and Zeitouni \cite{Dembo} or Varadhan \cite{VaradhanII}
for general background of large deviations and the applications.

A sequence $(P_{n})_{n\in\mathbb{N}}$ of probability measures on a topological space $X$
satisfies the large deviation principle with the speed $a_{n}$ and the rate function $I:X\rightarrow[0,\infty]$ if $I$
lower semicontinuous and for any measurable set $A$, we have
\begin{equation*}
-\inf_{x\in A^{o}}I(x)\leq\liminf_{n\rightarrow\infty}\frac{1}{a_{n}}\log P_{n}(A)
\leq\limsup_{n\rightarrow\infty}\frac{1}{a_{n}}\log P_{n}(A)\leq-\inf_{x\in\overline{A}}I(x).
\end{equation*}
Here, $A^{o}$ is the interior of $A$ and $\overline{A}$ is its closure.
The rate function $I$ is said to be good if for any $m$, the level set
$\{x:I(x)\leq m\}$ is compact.

The contraction principle concerns the behavior of large deviation principle under continuous mapping from one space to another. It states that if $(P_{n})_{n\in\mathbb{N}}$ satisfies a large deviation principle on $X$ with a good rate function $I(\cdot)$, and $F$ is a continuous
mapping from the Polish space $X$ to another Polish space $Y$, then the family $Q_n = P_n F^{-1}$
satisfies a large deviation principle on $Y$ with a good rate function $J(\cdot)$ given by
\begin{equation*}
J(y) = \inf_{x: F(x)=y} I(x).
\end{equation*}

%%%%%%%%%%%%%%%%%%%%%%%%%%%%%%%%%%%%%%%%%%%%%%%%%%%%%%%%%%%%%%
\subsection{Large deviation analysis for large initial intensity}\label{sec:largeintensity}
In this section we state a set of results on large deviations behavior of Markovian Hawkes processes when $Z_0=n$ is sent to infinity.
Note that processes $Z$ and $N$ both depend on the initial condition $Z_{0}=n$ and
we use $Z^{n},N^{n}$ to emphasize the dependence on $Z_{0}=n$.
We consider the process $Z^n$ first.

\begin{theorem}\label{thm:1}
$\mathbb{P}\left(\left\{\frac{1}{n}Z_{t}^n,0\leq t\leq T\right\}\in\cdot\right)$
satisfies a sample-path large deviation principle on $D[0,T]$ equipped with uniform topology with the speed $n$ and the good rate function
\begin{equation}\label{eq:Ig}
I_Z(g)=\int_{0}^{T}\frac{\beta g(t)+g'(t)}{\alpha}\log\left(\frac{\beta g(t)+g'(t)}{\alpha g(t)}\right)
-\left(\frac{\beta g(t)+g'(t)}{\alpha}-g(t)\right)dt,
\end{equation}
if $g\in\mathcal{AC}_{1}[0,T]$ and $g'\geq-\beta g$, and $I_Z(g)=\infty$ otherwise.
Moreover, $\mathbb{P}(\frac{1}{n}Z_{T}^n\in\cdot)$ satisfies a scalar large deviation
principle on $\mathbb{R}^{+}$ with the good rate function
\begin{eqnarray}
J(x;T)&=&\inf_{g(T)=x} I_Z(g) \label{JxT1}\\
&=& \sup_{\theta\in\mathbb{R}}\left\{\theta x-A(T;\theta)\right\} \label{JxT2}.
\end{eqnarray}
where
$A(t;\theta)$ satisfies the ODE (Ordinary Differential Equation):
\begin{eqnarray}
A'(t;\theta)&=&-\beta A(t;\theta)+e^{\alpha A(t;\theta)}-1, \label{eq:A-eq}
\\
A(0;\theta) &=& \theta. \label{eq:A-eq-initial}
\end{eqnarray}
\end{theorem}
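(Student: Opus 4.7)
The plan is to exploit the Markov structure of $(Z^n_t)$ by first computing its moment generating function explicitly via a Feynman--Kac ansatz, then deriving the scalar LDP through G\"artner--Ellis, obtaining the sample-path LDP via an exponential change of measure on the driving Poisson random measure, and finally reconciling the two expressions for $J(x;T)$ with the contraction principle. For the moment generating function, the infinitesimal generator of $Z$ is $\mathcal{L}f(z)=-\beta z f'(z)+(\mu+z)(f(z+\alpha)-f(z))$, and the exponential-affine ansatz $u(t,z)=\exp(A(t;\theta)z+B(t;\theta))$ reduces $\partial_t u=\mathcal{L}u$ to the pair
\begin{align*}
A'(t;\theta) &= -\beta A(t;\theta)+e^{\alpha A(t;\theta)}-1, & A(0;\theta)&=\theta,\\
B'(t;\theta) &= \mu(e^{\alpha A(t;\theta)}-1), & B(0;\theta)&=0,
\end{align*}
so that $n^{-1}\log\mathbb{E}[\exp(\theta Z^n_T)]=A(T;\theta)+B(T;\theta)/n\to A(T;\theta)$. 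The first ODE is of Riccati type; its effective domain $\mathcal{D}_T=\{\theta:A(T;\theta)<\infty\}$ is a left-unbounded open interval ending at a critical $\theta^\ast>0$ where the solution blows up. Verifying convexity and steepness of $A(T;\cdot)$ (i.e., $A'(T;\theta)\to\infty$ as $\theta\uparrow\theta^\ast$) then gives, by G\"artner--Ellis, a full LDP for $Z^n_T/n$ with Legendre rate function $J(x;T)=\sup_\theta\{\theta x-A(T;\theta)\}$.

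For the sample-path LDP, represent $N$ through a Poisson random measure $\pi$ on $[0,T]\times\mathbb{R}_{\geq 0}$ via $dN_t=\int_0^\infty\mathbf{1}_{\{u\le\mu+Z_{t-}\}}\pi(dt,du)$. For any bounded measurable tilt $\theta(\cdot)$,
\[M_t^\theta:=\exp\!\left(\int_0^t\theta(s)\,dN_s-\int_0^t(e^{\theta(s)}-1)(\mu+Z_{s-})\,ds\right)\]
is a mean-one exponential martingale, and the corresponding tilted law turns $N$ into a counting process with intensity $e^{\theta(t)}(\mu+Z_{t-})$. Combined with exponential tightness of $\{Z^n/n\}$ on $D[0,T]$ with the uniform topology (made possible by the uniformly small jump size $\alpha/n$), this change of measure yields a sample-path LDP first for $N^n/n$ and hence for $Z^n/n$ via the continuous solution map $\nu\mapsto g$ defined by $g(t)=e^{-\beta t}+\alpha\int_0^t e^{-\beta(t-s)}\,d\nu(s)$. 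A Legendre-dual computation of the limit log-MGF functional $\int_0^T g(t)(e^{\alpha\theta(t)}-1)\,dt$ subject to the deterministic relation $\dot\nu=(\beta g+g')/\alpha$ then recovers exactly \eqref{eq:Ig}, with the admissibility constraint $g'\geq-\beta g$ forced by non-negativity of $\dot\nu$.

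Finally, applying the contraction principle to the continuous evaluation map $g\mapsto g(T)$ on $\mathcal{AC}_1[0,T]$ produces $J(x;T)=\inf_{g(T)=x}I_Z(g)$, and this agrees with the G\"artner--Ellis Legendre form by convex duality---indeed, solving the resulting variational problem with Lagrange multiplier $\theta(\cdot)$ recovers the ODE \eqref{eq:A-eq} as the optimality condition. I expect the main obstacle to be the careful analysis of $A(T;\theta)$ near the blow-up threshold $\theta^\ast$, which is needed both to secure essential smoothness for G\"artner--Ellis and to certify that the Legendre dual of $A(T;\cdot)$ matches the infimum-form rate function on all of $\mathbb{R}^+$; a secondary issue is strengthening the path-space LDP from the Skorokhod to the uniform topology, which should follow by controlling large oscillations of $Z^n/n$ through exponential Chebyshev estimates based on $M^\theta_t$.
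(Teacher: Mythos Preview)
Your scalar LDP argument via the exponential--affine ansatz and G\"artner--Ellis is exactly what the paper does in its Step~1, and you have correctly anticipated that the delicate point is the blow-up analysis of $A(T;\theta)$ at the critical $\theta^\ast$ needed for essential smoothness; the paper handles this by a careful case split on $\alpha\lessgtr\beta$ and an implicit-function computation of $\partial_\theta A(T;\theta)$ from the integral representation $\int_\theta^{A(T;\theta)}\frac{dA}{-\beta A+e^{\alpha A}-1}=T$.

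For the sample-path LDP, however, you take a genuinely different route. The paper does \emph{not} use an exponential change of measure on a Poisson random measure. Instead, having already obtained the scalar rate $J(x;T)$, it exploits the Markov property to write finite-dimensional log-asymptotics as telescoping sums $\sum_j g(t_{j-1})J\!\left(\tfrac{g(t_j)}{g(t_{j-1})};t_j-t_{j-1}\right)$, then performs a short-time expansion of $J$ (using $A(0;\theta)=\theta$ and the ODE) to identify the Riemann-sum limit as $\int_0^T g(t)\sup_\theta\{\theta\,g'(t)/g(t)-(-\beta\theta+e^{\alpha\theta}-1)\}\,dt$. Exponential tightness (Claims \eqref{ClaimI}--\eqref{ClaimIII}) is proved by direct Chebyshev estimates using the explicit MGFs of $N_T$ and $N(t_{j-1},t_j]$, and then the $C$-exponential tightness upgrade to the uniform topology is trivial since jumps of $Z$ are of fixed size $\alpha$. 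Your martingale/tilting approach is equally valid and closer in spirit to the general Feng--Kurtz or Liptser--Puhalskii machinery; what it buys is a more systematic identification of the local rate as a Legendre transform of the Hamiltonian, while the paper's approach is more elementary and self-contained but requires the bare-hands Riemann-sum argument in Step~3. One small correction to your sketch: the limiting log-MGF functional for $Z^n/n$ should be $\int_0^T g(t)\bigl(-\beta\theta(t)+e^{\alpha\theta(t)}-1\bigr)\,dt$, not $\int_0^T g(t)(e^{\alpha\theta(t)}-1)\,dt$; the drift term $-\beta\theta g$ appears because you are tilting $Z$ rather than $N$ (equivalently, your martingale exponent contains $\int\theta\,dZ=\int\theta(\alpha\,dN-\beta Z\,dt)$).
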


Four remarks are in order.
\begin{enumerate}
\item [(a)] When $g(t)=e^{(\alpha-\beta)t}$ for $t \in [0,T]$, one immediately verifies from \eqref{eq:Ig} that $I_Z(g)=0$. This is consistent
with the functional law of large numbers for $\left\{\frac{1}{n}Z_{t}^n,0\leq t\leq T\right\}$ in \cite{GZ}.
\item [(b)] Note that $g'(t)=-\beta g(t)$ for any $0\leq t\leq T$ corresponds to
$Z^n_{t}=Z^n_{0}e^{-\beta t}=ne^{-\beta t}$ for any $0\leq t\leq T$,
which is equivalent to $N^n_{T}=0$. We can compute that $\mathbb{P}(N^n_{T}=0|Z^n_{0}=n)=e^{-\int_{0}^{T}(\mu+ne^{-\beta t})dt}$, which gives $-\lim_{n\rightarrow\infty}\frac{1}{n}\log\mathbb{P}(Z^n_{t}=ne^{-\beta t},0\leq t\leq T)
=\int_{0}^{T}e^{-\beta t}dt$
which is consistent with $I_{Z}(g)=\int_{0}^{T}e^{-\beta t}dt$
for $g'(t)=-\beta g(t)$ for any $0\leq t\leq T$.
\item [(c)] We have used
$A(t;\theta)$ instead of $A(t)$ to emphasize that $A$ takes value $\theta$ at time zero, and the derivative in \eqref{eq:A-eq} is taken with respect to $t$.
\item [(d)] We have two equivalent expressions for the rate function $J$: the first expression \eqref{JxT1} is directly implied by the sample--path large deviation principle
together with the contraction principle, and the second expression \eqref{JxT2} is obtained via G\"{a}rtner--Ellis Theorem. See Section~\ref{ProofsSec} for more details. In general, there are no analytical formulas for $A$ and the rate function $J$. But one can easily numerically solve the ODE for $A$ (e.g., Runge--Kutta methods) and then solve the optimization problem in \eqref{JxT2} to obtain the rate function $J$. An illustrative example is given in Figure~\ref{fig:J}.
\end{enumerate}

\begin{figure}[h]
\centering     %%% not \center
\subfigure[\text{$J(x;T)$ as a function of $x$. $T=5$ is fixed.}]{\includegraphics[width=.47\linewidth, height=5.35cm]{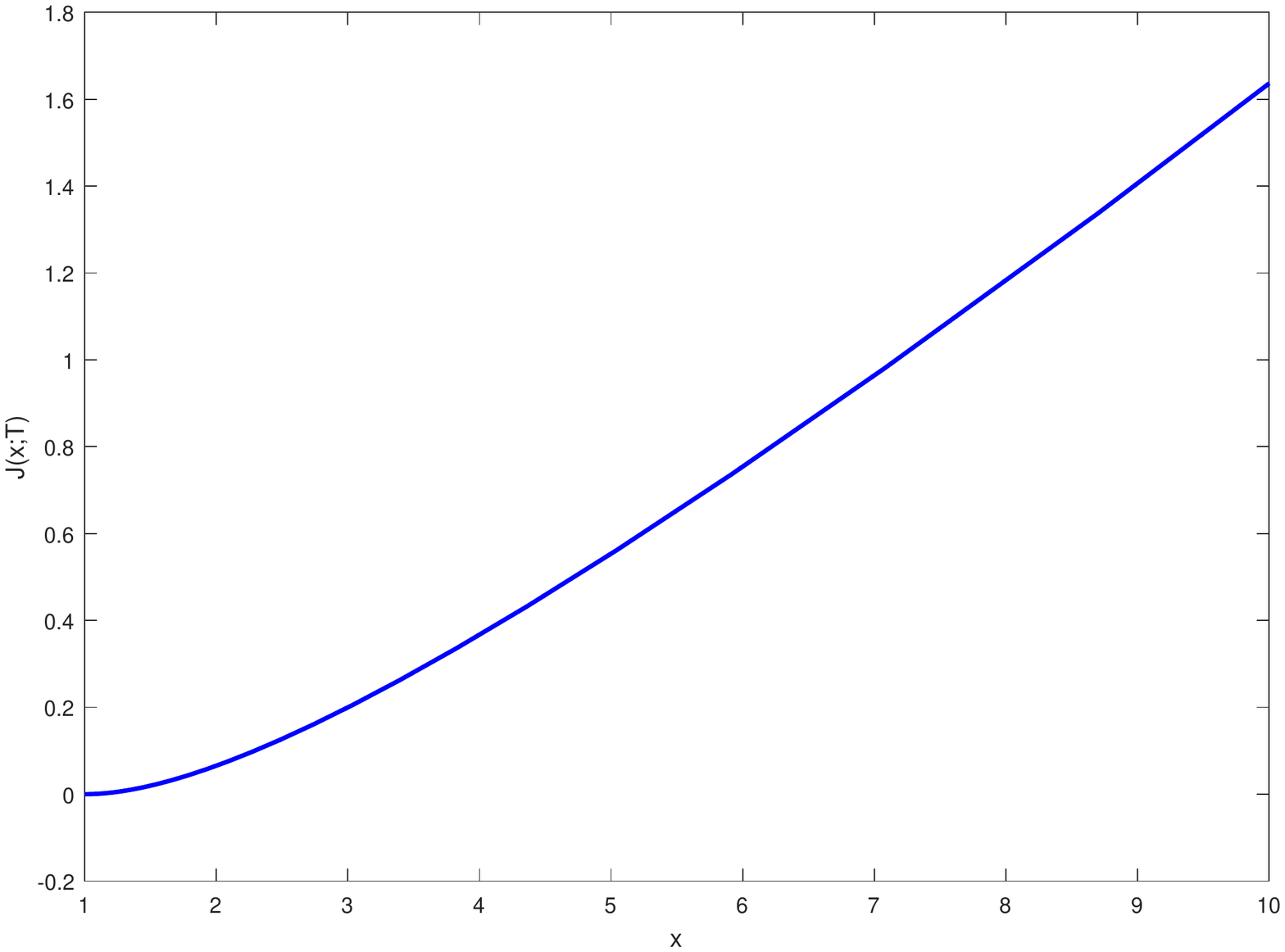}}
\subfigure[\text{$J(x;T)$ as a function of $T$. $x=3$ is fixed.}]{\includegraphics[width=.47\linewidth, height=5.5cm]{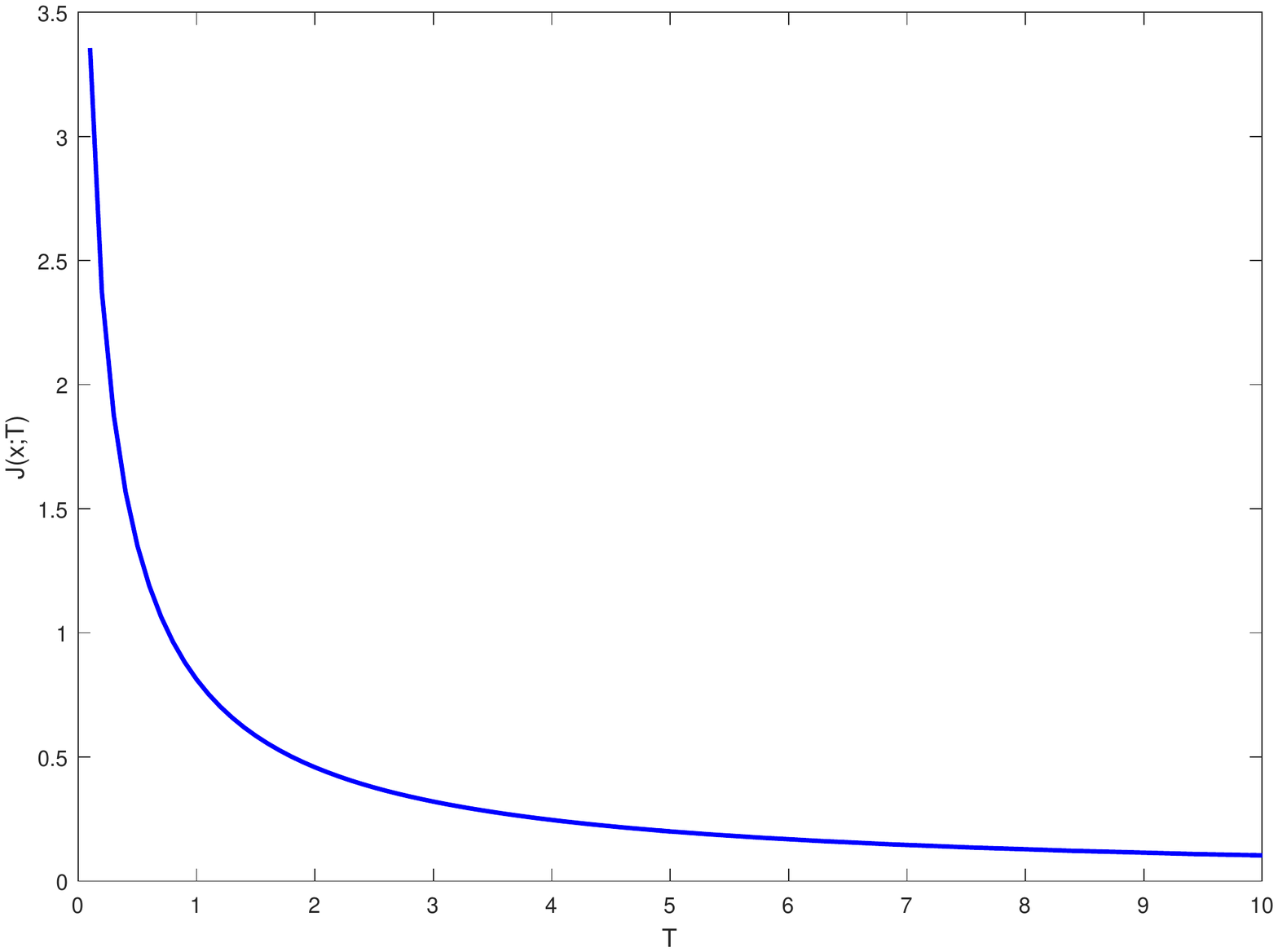}}
\caption{This figure plots the rate function $J(x;T)$ in \eqref{JxT2}. The parameters are given by: $\alpha=\beta =1$.}
\label{fig:J}
\end{figure}

Next we proceed to state a large deviation principle for $\mathbb{P}\left(\left\{\frac{1}{n}N_{t}^n,0\leq t\leq T\right\}\in\cdot\right)$.
To gain some intuition about the result, we note that
\begin{equation*}
dZ_{t}=-\beta Z_{t}dt+\alpha dN_{t},
\end{equation*}
which implies that
\begin{equation*}
N_{t}=\frac{Z_{t}-Z_{0}}{\alpha}+\frac{\beta}{\alpha}\int_{0}^{t}Z_{s}ds.
\end{equation*}
Given $Z_0=n$, equivalently we have
\begin{equation}\label{eq:Z-N}
\frac{1}{n} N_{t}^n=\frac{1}{\alpha} \cdot \left(\frac{Z_{t}^n}{n} -1 \right)+\frac{\beta}{\alpha}\int_{0}^{t} \frac{Z^n_{s}}{n}ds.
\end{equation}
Now if we define for $ t \in [0, T]$,
\[h(t)= \frac{g(t)-1}{\alpha}+\frac{\beta}{\alpha}\int_{0}^{t}g(s)ds, \]
then one readily verifies that the map $g \mapsto h$
is a continuous map from $D[0,T]$ to $D[0,T]$ under the uniform topology.
Therefore, by Theorem~\ref{thm:1} and the contraction principle, we can obtain the following result. The details of the proof is left to Section~\ref{ProofsSec}.

\begin{theorem} \label{thm:2}
$\mathbb{P}\left(\left\{\frac{1}{n}N_{t}^n,0\leq t\leq T\right\}\in\cdot\right)$
satisfies a sample-path large deviation principle on $D[0,T]$ equipped with uniform topology
with the speed $n$ and the good rate function
\begin{align}\label{INRate}
I_{N}(h)
&=\int_{0}^{T}h'(t)\log\left(\frac{h'(t)}{e^{-\beta t}+e^{-\beta t}\int_{0}^{t}\alpha e^{\beta s}h'(s)ds}\right)
\\
&\qquad\qquad\qquad
-\left(h'(t)-e^{-\beta t}-e^{-\beta t}\int_{0}^{t}\alpha e^{\beta s}h'(s)ds\right)dt,
\nonumber
\end{align}
if $h\in\mathcal{AC}_{0}^{+}[0,T]$, and $I_{N}(h)=\infty$ otherwise.
Moreover, $\mathbb{P}(N^n_T/n\in\cdot)$ satisfies
a scalar large deviation principle on $\mathbb{R}_{\geq 0}$ with the good rate function
\begin{eqnarray}
H(x;T)&=&\inf_{h:h(T)=x}I_{N}(h)\label{INxT} \\
&=& \sup_{\theta\in\mathbb{R}}\left\{\theta x-C\left(T;\frac{\theta}{\alpha}\right)+\frac{\theta}{\alpha}\right\}, \label{INxT2}
\end{eqnarray}
where $C(t;\frac{\theta}{\alpha})$ solves the ODE
\begin{eqnarray}
C'\left(t;\frac{\theta}{\alpha}\right)&=&-\beta C\left(t;\frac{\theta}{\alpha}\right) +e^{\alpha \cdot C\left(t;\frac{\theta}{\alpha}\right)}-1+\frac{\beta\theta}{\alpha}, \label{eq:C1}
\\
C\left(0;\frac{\theta}{\alpha}\right)&=&\frac{\theta}{\alpha}. \label{eq:C2}
\end{eqnarray}
\end{theorem}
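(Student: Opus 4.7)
The plan is to derive Theorem~\ref{thm:2} from Theorem~\ref{thm:1} in two stages: first a sample-path LDP for $N^n/n$ via the contraction principle, then a scalar LDP for $N_T^n/n$ using both the contraction principle (for \eqref{INxT}) and the G\"{a}rtner--Ellis theorem (for \eqref{INxT2}), after identifying the limiting logarithmic moment generating function explicitly through a Feynman--Kac ODE.

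For the sample-path part, the identity \eqref{eq:Z-N} suggests introducing the map
\begin{equation*}
F:D[0,T]\to D[0,T],\qquad F(g)(t)=\frac{g(t)-1}{\alpha}+\frac{\beta}{\alpha}\int_{0}^{t}g(s)\,ds.
\end{equation*}
I would first check that $F$ is continuous in the uniform topology (the pointwise formula yields a uniform estimate $\|F(g_1)-F(g_2)\|_\infty\le \alpha^{-1}(1+\beta T)\|g_1-g_2\|_\infty$), then apply the contraction principle to the LDP of Theorem~\ref{thm:1} to conclude a sample-path LDP for $N^n/n$ with rate function $I_N(h)=\inf_{g:F(g)=h}I_Z(g)$. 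On the effective domain of $I_Z$, the map $F$ is invertible: differentiating $h=F(g)$ gives $\alpha h'(t)=g'(t)+\beta g(t)$, which together with $g(0)=1$ yields $g(t)=e^{-\beta t}+e^{-\beta t}\int_{0}^{t}\alpha e^{\beta s}h'(s)\,ds$. The constraints $g\in\mathcal{AC}_{1}[0,T]$ and $g'\ge-\beta g$ correspond exactly to $h\in\mathcal{AC}_{0}^{+}[0,T]$. Substituting $\beta g+g'=\alpha h'$ and the explicit formula for $g$ into \eqref{eq:Ig} immediately gives the claimed expression \eqref{INRate}.

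For the scalar statement, equation \eqref{INxT} follows by one more application of the contraction principle, using continuity of the evaluation map $h\mapsto h(T)$ on $(D[0,T],\|\cdot\|_\infty)$. To obtain the variational expression \eqref{INxT2}, I would use G\"{a}rtner--Ellis, which requires computing
\begin{equation*}
\Lambda(\theta):=\lim_{n\to\infty}\frac{1}{n}\log\mathbb{E}\!\left[e^{\theta N_T^n}\right].
\end{equation*}
Using \eqref{eq:Z-N} with $\eta=\theta/\alpha$, one has $\theta N_T^n=\eta(Z_T^n-n)+\eta\beta\int_0^T Z_s^n\,ds$, so it suffices to analyze $\mathbb{E}[e^{\eta Z_T^n+\eta\beta\int_0^T Z_s^n\,ds}]$. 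Applying Feynman--Kac to the Markov process $Z$, the function $v(t,z)=\mathbb{E}[e^{\eta Z_T+\eta\beta\int_t^T Z_s\,ds}\mid Z_t=z]$ satisfies the integro-differential equation
\begin{equation*}
\partial_t v-\beta z\,\partial_z v+(\mu+z)\bigl(v(t,z+\alpha)-v(t,z)\bigr)+\eta\beta z\, v=0,\qquad v(T,z)=e^{\eta z}.
\end{equation*}
Plugging in the ansatz $v(t,z)=\exp(B(t)z+D(t))$ and matching coefficients of $z$ yields $B'(t)=\beta B(t)-e^{\alpha B(t)}+1-\eta\beta$ with $B(T)=\eta$; reversing time via $C(s;\eta):=B(T-s;\eta)$ produces exactly the ODE \eqref{eq:C1}--\eqref{eq:C2}. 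Since $D$ does not depend on $n$, the constant-in-$z$ part is lower order, giving $\Lambda(\theta)=C(T;\theta/\alpha)-\theta/\alpha$ and hence \eqref{INxT2} after Legendre transform.

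The main obstacle is justifying the G\"{a}rtner--Ellis step rigorously: one must verify that the ansatz produces a true solution to the PIDE, that the resulting ODE for $B$ has a global solution on $[0,T]$ for the relevant range of $\theta$, and that $\Lambda$ is essentially smooth/steep so that the full LDP follows and its rate function coincides with the Legendre transform. A mild regularity check on the ODE \eqref{eq:C1}, combined with the fact that the sample-path LDP already guarantees a scalar LDP with good rate function \eqref{INxT}, removes most of the analytic burden: once the limit $\Lambda(\theta)$ is identified on its effective domain, the two expressions \eqref{INxT} and \eqref{INxT2} must agree by uniqueness of the LDP rate function. Cross-checking boundary cases (e.g.\ $\theta=0$ giving $C(T;0)=0$ and the zero of $H$ consistent with the LLN in \cite{GZ}) provides a useful sanity check before writing up the detailed argument in Section~\ref{ProofsSec}.
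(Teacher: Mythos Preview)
Your proposal is correct and follows essentially the same route as the paper: contraction principle through the map $F(g)(t)=\frac{g(t)-1}{\alpha}+\frac{\beta}{\alpha}\int_0^t g(s)\,ds$ to get the sample-path LDP and the explicit $I_N$, a second contraction for \eqref{INxT}, and then G\"{a}rtner--Ellis via the Feynman--Kac/affine ansatz for \eqref{INxT2}. The ``mild regularity check'' you anticipate is exactly what the paper carries out---it verifies essential smoothness of $\theta\mapsto C(T;\theta/\alpha)-\theta/\alpha$ by differentiating the implicit relation $\int_{\theta/\alpha}^{C(T;\theta/\alpha)}\frac{dC}{-\beta C+e^{\alpha C}-1+\beta\theta/\alpha}=T$ and showing the derivative blows up at the critical $\theta_d(T)$; without this step the ``uniqueness of the rate function'' argument alone would only yield $\Lambda^{*}=H^{**}$, so you do need that check (or a separate convexity argument for $H$) to close the identification.
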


Four remarks are in order.
\begin{enumerate}
\item [(a)] Notice that $I_{N}(h)=\int_{0}^{T}R(h'(t),e^{-\beta t}+e^{-\beta t}\int_{0}^{t}\alpha e^{\beta s}h'(s)ds)dt$,
where $R(x,y):=x\log\left(\frac{x}{y}\right)-x+y$, for any $x,y>0$.
It is easy to see that $R(x,y)\geq 0$ and $R(x,y)=0$ if and only if $x=y$.
Therefore $I_{N}(h)=0$ if and only if $h'(t)=e^{-\beta t}+e^{-\beta t}\int_{0}^{t}\alpha e^{\beta s}h'(s)ds$ for any $0\leq t\leq T$.
Together with $h'(0)=1$, we get $h'(t)=e^{(\alpha-\beta)t}$.
With the initial condition $h(0)=0$, we get $h(t)=\int_{0}^{t}e^{(\alpha-\beta)s}ds=t$ if $\alpha=\beta$ and
$\frac{e^{(\alpha-\beta)t}-1}{\alpha-\beta}$ if $\alpha\neq\beta$.
This is consistent
with the functional law of large numbers for $\left\{\frac{1}{n}N^n_{t},0\leq t\leq T\right\}$ in \cite{GZ}.
\item [(b)] Note that $h\equiv 0$ corresponds to $N^n_{T}=0$. We can compute that $\mathbb{P}(N^n_{T}=0|Z^n_{0}=n)=e^{-\int_{0}^{T}(\mu+ne^{-\beta t})dt}$,
which gives $-\lim_{n\rightarrow\infty}\frac{1}{n}\log\mathbb{P}(N^n_{T}=0|Z^n_{0}=n)=\int_{0}^{T}e^{-\beta t}dt$,
which is consistent with $I_{N}(h)=\int_{0}^{T}e^{-\beta t}dt$
for $h\equiv 0$.
\item [(c)] Similar as in Theorem~\ref{thm:1}, we use
$C(t;\frac{\theta}{\alpha})$ instead of $C(t)$ to emphasize that $C$ takes value $\frac{\theta}{\alpha}$ at time zero. The derivative in \eqref{eq:C1} is taken with respect to $t$.
\item [(d)] Similar as in Theorem~\ref{thm:1}, we have two equivalent expressions for the rate function $H$. In general, there is no analytical formula for $H$. But one can easily numerically solve the ODE for $C$ (e.g., Runge--Kutta methods) and then solve the optimization problem in \eqref{INxT2} to obtain the rate function $H$. An illustrative example is given in Figure~\ref{fig:H}.
\end{enumerate}

\begin{figure}[h]
\centering     %%% not \center
\subfigure[\text{$H(x;T)$ as a function of $x$. $T=5$ is fixed.}]{\includegraphics[width=.47\linewidth, height=5.5cm]{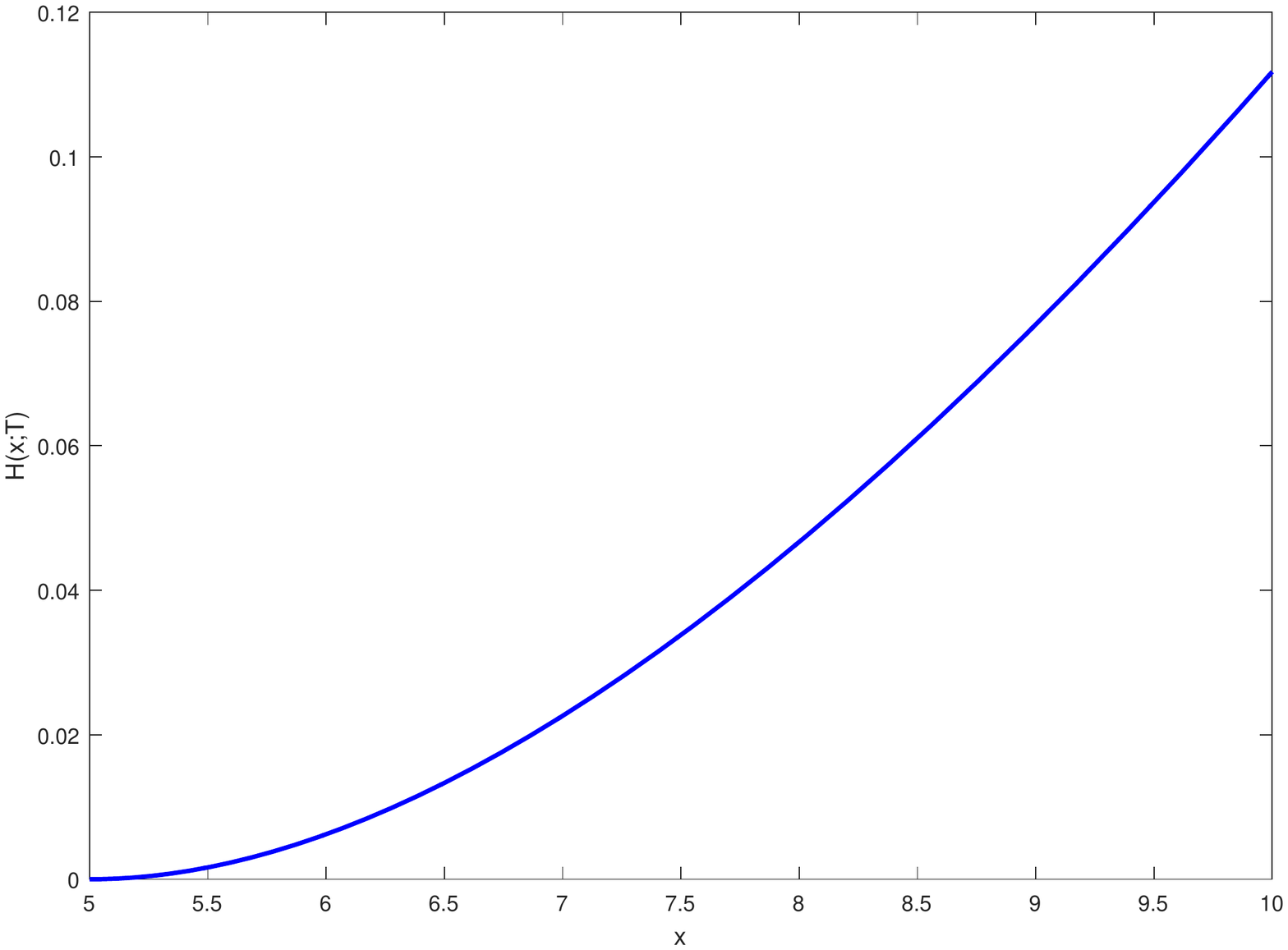}}
\subfigure[\text{$H(x;T)$ as a function of $T$. $x=5$ is fixed.}]{\includegraphics[width=.47\linewidth, height=5.4cm]{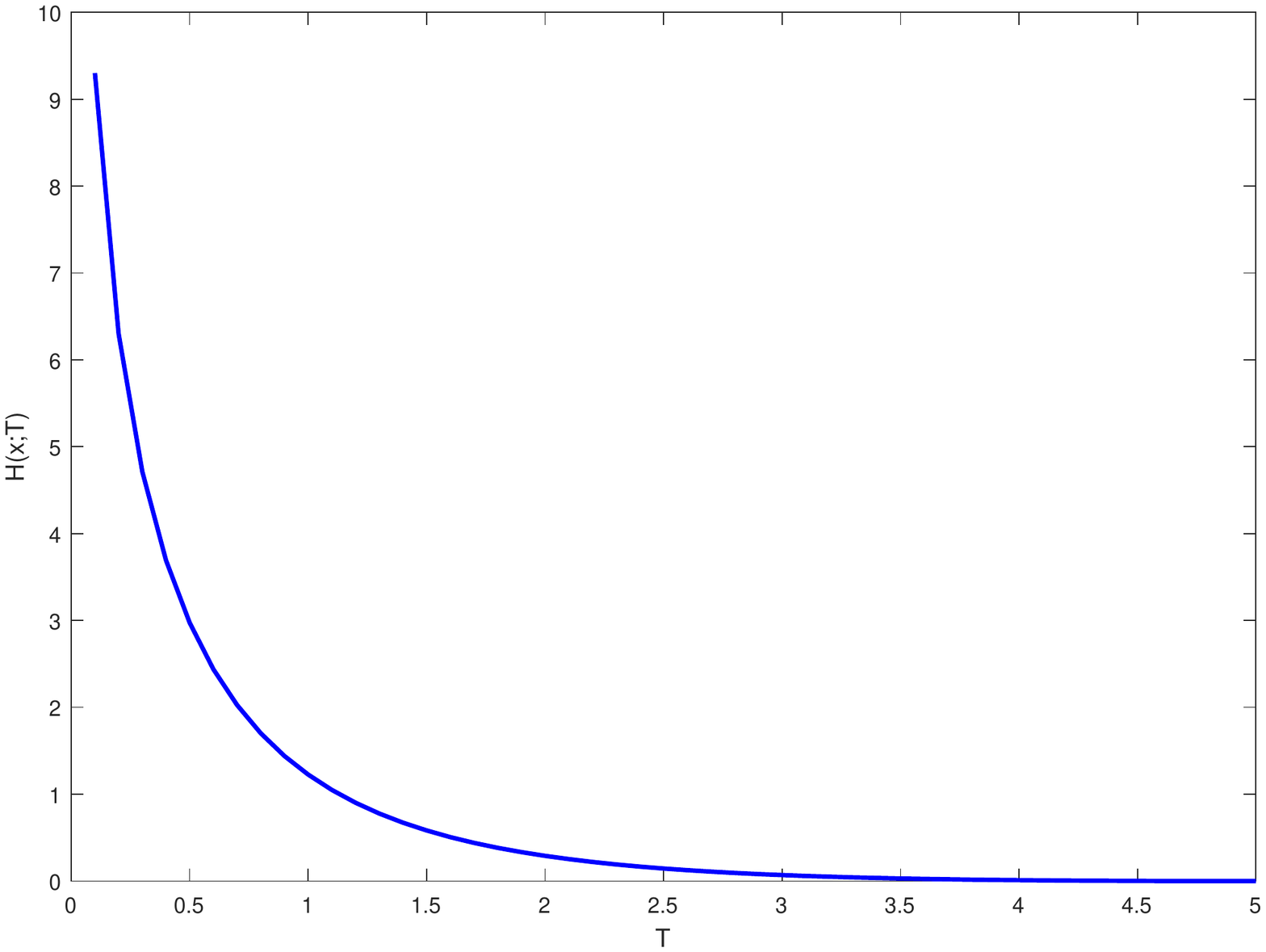}}
\caption{This figure plots the rate function $H(x;T)$ in \eqref{INxT2}. The parameters are given by: $\alpha=\beta =1$.}
\label{fig:H}
\end{figure}

\subsubsection{Most likely paths}\label{sec:path}

In this section, we compute the most likely paths to rare events for Hawkes processes with large initial intensities.
More precisely, we are interested to find the minimizer to the variational problems in \eqref{JxT1} and \eqref{INxT}.

Fix $x\in\mathbb{R}^{+}$. Let $\theta_*$ be the unique maximizer to the optimization problem \eqref{JxT2}.\footnote{It will be clear from the Proof of Theorem~\ref{thm:1} that
$A(T;\theta)=\lim_{n\rightarrow\infty}\frac{1}{n}\log\mathbb{E}[e^{\theta Z_{T}}|Z_0=n]$ if the limit exists. So one readily verifies that $A(T;\theta)$ is convex in $\theta$, and in fact strictly convex in $\theta$
from \eqref{eq:A-eq}.
Hence, there is a unique optimal $\theta_{\ast}$ for the optimization problem \eqref{JxT2}.}

\begin{proposition} \label{prop:1}
The minimizer to the variational problem \eqref{JxT1} is given by
\begin{eqnarray}\label{eq:gstar}
g_*(t) = \exp\left( \int_{0}^t \alpha e^{\alpha A(s; \theta_*)}ds - \beta t \right),
\end{eqnarray}
for $0\leq t\leq T$, where $A(s; \theta_*)$ solves the ODE \eqref{eq:A-eq} with an initial condition $A(0; \theta_*)= \theta_*$.
\end{proposition}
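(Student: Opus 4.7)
The plan is to exploit the dual representation from Theorem~\ref{thm:1}, namely $J(x;T)=\sup_\theta(\theta x-A(T;\theta))=\theta_* x - A(T;\theta_*)$, and to verify directly that the candidate $g_*$ in \eqref{eq:gstar} attains the infimum in the primal characterization $J(x;T)=\inf_{g(T)=x} I_Z(g)$.

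First, I would verify admissibility and the endpoint conditions. By construction $g_*(0)=1$ and $g_*\in\mathcal{AC}_1[0,T]$; moreover $g_*'(t)=g_*(t)(\alpha e^{\alpha A(t;\theta_*)}-\beta)\geq-\beta g_*(t)$, so $I_Z(g_*)<\infty$. For $g_*(T)=x$, I would differentiate the ODE~\eqref{eq:A-eq} with respect to $\theta$: the function $B(t;\theta):=\partial_\theta A(t;\theta)$ solves the linear ODE $B'(t;\theta)=(\alpha e^{\alpha A(t;\theta)}-\beta)B(t;\theta)$ with $B(0;\theta)=1$, so $B(T;\theta_*)=\exp\bigl(\int_0^T(\alpha e^{\alpha A(s;\theta_*)}-\beta)\,ds\bigr)=g_*(T)$. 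By the strict convexity of $\theta\mapsto A(T;\theta)$ noted in the paper and the first-order optimality condition for the supremum in \eqref{JxT2}, we have $x=\partial_\theta A(T;\theta_*)=B(T;\theta_*)=g_*(T)$.

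Next, I would compute $I_Z(g_*)$ explicitly. From $(\beta g_*+g_*')/\alpha=g_* e^{\alpha A(t;\theta_*)}$ we get $\log\bigl((\beta g_*+g_*')/(\alpha g_*)\bigr)=\alpha A(t;\theta_*)$, so the integrand of \eqref{eq:Ig} becomes $g_*\bigl[(\alpha A(t;\theta_*)-1)e^{\alpha A(t;\theta_*)}+1\bigr]$. Substituting the identity $e^{\alpha A(t;\theta_*)}-1=A'(t;\theta_*)+\beta A(t;\theta_*)$ from~\eqref{eq:A-eq} together with $\alpha g_* e^{\alpha A(t;\theta_*)}=g_*'+\beta g_*$, and then integrating by parts using the boundary data $g_*(0)=1$, $g_*(T)=x$, $A(0;\theta_*)=\theta_*$, the integral should collapse to $\theta_* x - A(T;\theta_*)=J(x;T)$. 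Combined with the trivial lower bound $I_Z(g_*)\geq J(x;T)$ from the infimum/duality, this identifies $g_*$ as a minimizer.

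The main obstacle is the algebraic simplification in the second step: carefully tracking the interplay between the ODE~\eqref{eq:A-eq} for $A(\cdot;\theta_*)$ and the structure of $g_*$ so that the integration by parts produces exactly the dual value $\theta_* x - A(T;\theta_*)$ and nothing more. If the direct computation becomes delicate, an alternative route is to write down the Euler--Lagrange equation for the variational problem $\inf_{g(T)=x} I_Z(g)$, note that strict convexity of the Lagrangian in $g'$ guarantees a unique minimizer under the fixed endpoints $g(0)=1$, $g(T)=x$, and verify that the $g_*$ defined by \eqref{eq:gstar} satisfies both this equation and the boundary conditions.
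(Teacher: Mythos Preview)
Your overall strategy---check admissibility and the endpoint $g_*(T)=x$, then compute $I_Z(g_*)$ directly and match it to the dual value $\theta_* x - A(T;\theta_*)$---is exactly the verification the paper carries out, and your endpoint argument via the variational equation for $\partial_\theta A$ is also the paper's.

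However, the integration step you outline will not close as written, and the obstruction is precisely the one you flag as ``the main obstacle.'' If you substitute the \emph{forward} ODE $e^{\alpha A(t;\theta_*)}-1=A'(t;\theta_*)+\beta A(t;\theta_*)$ from \eqref{eq:A-eq} together with $\alpha g_* e^{\alpha A}=g_*'+\beta g_*$ into the integrand $g_*\bigl[(\alpha A-1)e^{\alpha A}+1\bigr]$, you obtain $A\,g_*' - g_*\,A'$, which is \emph{not} an exact derivative; integration by parts gives $A(T;\theta_*)\,x-\theta_*$ plus the nonvanishing remainder $-2\int_0^T A'\,g_*\,dt$, not $\theta_* x - A(T;\theta_*)$. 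The reason is that the Euler--Lagrange equation for this Lagrangian forces
\[
q(t):=\tfrac{1}{\alpha}\log\bigl((\beta g_*+g_*')/(\alpha g_*)\bigr)
\]
to satisfy the \emph{time-reversed} flow $q'=\beta q-(e^{\alpha q}-1)$, so that $q(t)=A(T-t;\theta_*)$ rather than $A(t;\theta_*)$. With this $q$ one has $e^{\alpha q}-1=-q'+\beta q$, the integrand becomes $q\,g_*'+q'\,g_*=(q\,g_*)'$, and the integral collapses to $q(T)\,x-q(0)=\theta_* x-A(T;\theta_*)$. The paper arrives at this by doing Euler--Lagrange first (your ``alternative route''), which is what reveals the time reversal; the displayed formula \eqref{eq:gstar} in the statement should really carry $A(T-s;\theta_*)$ in the exponent, as the paper's own derivation shows. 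At $t=T$ the two integrals $\int_0^T e^{\alpha A(s;\theta_*)}\,ds$ and $\int_0^T e^{\alpha A(T-s;\theta_*)}\,ds$ agree by the substitution $s\mapsto T-s$, which is why your endpoint check $g_*(T)=x$ succeeds either way, but for $0<t<T$ and for the computation of $I_Z(g_*)$ the distinction is essential: the $g_*$ in \eqref{eq:gstar} taken literally does not satisfy the Euler--Lagrange equation and is not the minimizer.
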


Next we consider the variational problem \eqref{INxT}. Let $\hat \theta_*$ be the unique maximizer to the optimization problem \eqref{INxT2}.\footnote{It will be clear from the Proof of Theorem~\ref{thm:2} that
$C(T;\frac{\theta}{\alpha})-\frac{\theta}{\alpha}=\lim_{n\rightarrow\infty}\frac{1}{n}\log\mathbb{E}[e^{\theta N_{T}}|Z_0=n]$
is always convex in $\theta$ if the limit exists. Indeed, from the ODE \eqref{eq:C1}, the limit must be strictly convex.
Hence, there is a unique optimal $\hat{\theta}_{\ast}$ for the optimization problem \eqref{INxT2}.}

\begin{proposition}\label{prop:2}
The minimizer to the variational problem \eqref{INxT} is given by
\begin{eqnarray}\label{eq:hstar}
h_*(t)=\int_{0}^{t}\exp \left(\alpha \cdot C\left(T-s; \frac{\hat \theta_*}{\alpha}\right) + {\alpha\int_{0}^{s}e^{\alpha C(T-u; \frac{\hat \theta_*}{\alpha})}du-\beta s} \right)ds,
\end{eqnarray}
for any $0\leq t\leq T$, where $C(s; \frac{\hat \theta_*}{\alpha})$ solves the ODE \eqref{eq:C1} with the initial condition $C(0; \frac{\hat \theta_*}{\alpha}) = \frac{\hat \theta_*}{\alpha}$.
%\[p(s)= C(T-s; \frac{\hat \theta_*}{\alpha})\]
\end{proposition}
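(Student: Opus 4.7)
The plan is to verify that the candidate path $h_*$ defined in \eqref{eq:hstar} satisfies two properties: (i) it is admissible, with $h_* \in \mathcal{AC}_0^+[0,T]$ and $h_*(T) = x$; and (ii) $I_N(h_*) = \hat\theta_* x - C(T;\hat\theta_*/\alpha) + \hat\theta_*/\alpha$. Since the dual formula \eqref{INxT2} equals the primal infimum \eqref{INxT} and $\hat\theta_*$ is the unique maximizer, (i) and (ii) together force $h_*$ to achieve the infimum in \eqref{INxT}, hence to be the minimizer. Membership of $h_*$ in $\mathcal{AC}_0^+[0,T]$ is immediate from $h_*(0)=0$ and positivity of the integrand in \eqref{eq:hstar}.

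To establish $h_*(T)=x$, I would differentiate the dual objective $\theta \mapsto \theta x - C(T;\theta/\alpha) + \theta/\alpha$ in $\theta$ and use the first-order condition at $\hat\theta_*$ to obtain $\alpha x + 1 = \partial_u C(T;u)|_{u = \hat\theta_*/\alpha}$. Set $\rho(s) := \partial_u C(s;u)|_{u = \hat\theta_*/\alpha}$; differentiating the ODE \eqref{eq:C1} in the initial condition $u$ gives the linear ODE $\rho'(s) = (-\beta + \alpha e^{\alpha C(s;\hat\theta_*/\alpha)})\rho(s) + \beta$ with $\rho(0)=1$. I would solve this via the integrating factor $\Phi(s):= \exp\bigl(\beta s - \alpha\int_0^s e^{\alpha C(r;\hat\theta_*/\alpha)}\,dr\bigr)$ and then, after the substitution $r = T-s$ inside \eqref{eq:hstar}, check algebraically (using $\Phi'/\Phi = \beta - \alpha e^{\alpha C(\cdot;\hat\theta_*/\alpha)}$) that $\rho(T) = 1 + \alpha h_*(T)$. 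Combined with the FOC this yields $h_*(T) = x$.

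For step (ii), let $g_*(s) := e^{-\beta s} + e^{-\beta s}\int_0^s \alpha e^{\beta u}h_*'(u)\,du$ be the auxiliary process appearing inside the logarithm in \eqref{INRate}. Since $g_*$ satisfies $g_*' = -\beta g_* + \alpha h_*'$ with $g_*(0)=1$, a direct computation from \eqref{eq:hstar} gives $g_*(s) = \exp\bigl(-\beta s + \alpha\int_0^s e^{\alpha C(T-u;\hat\theta_*/\alpha)}du\bigr)$, so $h_*'(s)/g_*(s) = e^{\alpha C(T-s;\hat\theta_*/\alpha)}$ and
\begin{equation*}
I_N(h_*) = \int_0^T \alpha C(T-s;\hat\theta_*/\alpha)\,h_*'(s)\,ds - h_*(T) + \int_0^T g_*(s)\,ds.
\end{equation*}
Integrating the first term by parts (with $h_*(0)=0$ and $C(0;\hat\theta_*/\alpha)=\hat\theta_*/\alpha$) produces the boundary contribution $\hat\theta_* x$ plus an integral of $\alpha\,\partial_t C(T-s;\hat\theta_*/\alpha)\,h_*(s)$; substituting the ODE \eqref{eq:C1} for $\partial_t C$ and invoking the coupling $g_*'+\beta g_*=\alpha h_*'$ to collapse the residual $h_*$-integrals simplifies the expression to $\hat\theta_* x - C(T;\hat\theta_*/\alpha) + \hat\theta_*/\alpha$, completing step (ii).

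The main obstacle is the bookkeeping in step (ii): the integration by parts together with substitution of \eqref{eq:C1} produces several integrals that must cancel in precisely the right way, a cancellation forced by the coupled identity $g_*'=-\beta g_*+\alpha h_*'$ and by the time-reversed argument $T-s$ in $C$. Unlike Proposition~\ref{prop:1}, where the terminal constraint $g(T)=x$ is pointwise, here the constraint $h(T)=x$ translates via \eqref{eq:Z-N} to the integral-plus-endpoint constraint $g_*(T)+\beta\int_0^T g_*(s)\,ds = 1+\alpha x$ on the $g$-picture, with $\hat\theta_*$ playing the role of the Lagrange multiplier; this is precisely why a naive calculus-of-variations argument lifted from Proposition~\ref{prop:1} is insufficient and why the dual representation \eqref{INxT2} is the natural tool.
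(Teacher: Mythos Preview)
Your proposal is correct and takes a verification-only route that differs in organization from the paper's. The paper first \emph{derives} the candidate: it changes variable to $f(t)=\int_0^t \alpha e^{\beta s}h'(s)\,ds$, attaches a Lagrange multiplier $\lambda$ to the integral constraint $\int_0^T \alpha^{-1}e^{-\beta t}f'(t)\,dt=x$, and solves the resulting Euler--Lagrange equation together with the transversality condition $\partial L/\partial f'|_{t=T}=0$; this produces $p(t):=\tfrac{1}{\alpha}\log\bigl(f_*'/(\alpha+\alpha f_*)\bigr)=C(T-t;\lambda/\alpha)$. The identification $\lambda=\hat\theta_*$ is then made by exactly the sensitivity-ODE argument you give for $\rho$. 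Your approach skips the derivation entirely and goes straight to verification, which is legitimate since the formula for $h_*$ is stated. For step~(ii), the paper's verification is clean because it can quote the Euler--Lagrange identity: writing the integrand (in the $f$ variable) as a product-rule total derivative collapses everything to two boundary terms. Your integration-by-parts route works too, but the specific scheme you sketch (move the derivative from $h_*'$ onto $C(T-s)$, substitute \eqref{eq:C1}, then invoke $g_*'+\beta g_*=\alpha h_*'$) generates several residual $h_*$-integrals; a tidier execution is to observe directly that the integrand $\alpha C(T-s)h_*'(s)-h_*'(s)+g_*(s)$ equals $\tfrac{d}{ds}\bigl[C(T-s)\,g_*(s)\bigr]+\tfrac{\beta\hat\theta_*}{\alpha}g_*(s)$, after which your constraint $g_*(T)+\beta\int_0^T g_*=1+\alpha x$ finishes the computation in one line. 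Finally, your closing remark slightly overstates the case: a calculus-of-variations argument \emph{is} what the paper uses, just with a Lagrange multiplier and a free-endpoint (transversality) condition rather than the fixed-endpoint setup of Proposition~\ref{prop:1}.
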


The proofs of these two propositions are deferred to the online appendix.

\subsection{Large deviation analysis for large initial intensity and large time}\label{sec:largetime}

This section is devoted to a set of results on large deviations behavior of Markovian Hawkes processes in the asymptotic regime
where both $Z_0 = n$ and the time go to infinity. The proofs of these results are deferred to the online appendix due to space considerations.

When the time is sent to infinity, Hawkes processes behave differently depending on the value of
$\Vert \phi\Vert_{L^{1}}$ (see, e.g., Zhu \cite{ZhuThesis}).
In our case,  the exciting function is exponential: $\phi(t)= \alpha e^{-\beta t}$. So we have the following three different cases: (1) critical: $\alpha= \beta$; (2) super--critical: $\alpha> \beta$; and (3) sub--critical: $\alpha < \beta$.
We study each case separately.

%%%%%%%%%%%%%%%%%%%%%%%%%%%%%%%%%%%%%%%%%%%%%%%%%%%%%%%%%%%%%%%%%%%%%%%%%%%%%%%%%%%%%%%%%%%%%%%%%%%%%%%%%%%%%%%%%%%
\subsubsection{Critical case} \label{sec:critical}

We first consider the critical case, i.e., $\alpha = \beta>0$.
\begin{theorem}\label{thm:critical}
Assume that $\alpha=\beta>0$. Let $t_{n}$ be a positive sequence that goes to infinity as $n \rightarrow \infty$ and $\lim_{n\rightarrow\infty}\frac{t_{n}}{n}=0$.

(i) For any $T>0$, $\mathbb{P}(\frac{Z^n_{t_{n}T}}{n}\in\cdot)$ satisfies
a large deviation principle on $\mathbb{R}$ with the speed $\frac{n}{t_{n}}$ and the rate function
{\begin{equation*}
\hat{I}_{Z}(x)=
\frac{2(\sqrt{x}-1)^{2}}{\alpha^{2}T}, \quad \text{if $x\ge 0,$}
\end{equation*}}
and $+\infty$ otherwise.

(ii) For any $T>0$, $\mathbb{P}(\frac{N^n_{t_{n}T}}{nt_{n}}\in\cdot)$ satisfies
a large deviation principle on $\mathbb{R}$ with the speed $\frac{n}{t_{n}}$ and rate function
\begin{equation*}
\hat{I}_{N}(x)=\sup_{\theta\in\mathbb{R}}\left\{\theta x-\Lambda(\theta)\right\},
\end{equation*}
where
\begin{equation*}
\Lambda(\theta)%:=\lim_{n\rightarrow\infty}\frac{t_{n}}{n}\log\mathbb{E}\left[e^{\frac{\theta}{t_{n}^{2}}N_{t_{n}T}}\right]
=
\begin{cases}
\frac{\sqrt{-2\theta}}{{\alpha}}\tanh\left(\frac{-\alpha}{\sqrt{2}}\sqrt{-\theta}T\right) &\text{if $\theta\leq 0$,}
\\
\frac{\sqrt{2\theta}}{{\alpha}}\tan\left(\frac{\alpha}{\sqrt{2}}\sqrt{\theta}T\right)
&\text{if $\theta>0$.}
\end{cases}
\end{equation*}
\end{theorem}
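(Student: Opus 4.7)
The plan is to apply the Gärtner--Ellis theorem to suitably rescaled logarithmic moment generating functions of $Z^n_{t_nT}$ and $N^n_{t_nT}$. I would first record the exact exponential moment formulas obtained by the same Kolmogorov ansatz that underlies Theorems~\ref{thm:1} and~\ref{thm:2}: substituting $u(t,z)=\exp(zA(t;\vartheta)+B(t;\vartheta))$ into the backward equation $\partial_t u=-\beta z\partial_z u+(\mu+z)(u(z+\alpha)-u(z))$ yields
\begin{equation*}
\mathbb{E}\bigl[e^{\vartheta Z^n_t}\bigm|Z_0=n\bigr]=\exp\Bigl(nA(t;\vartheta)+\mu\!\int_0^t\!\bigl(e^{\alpha A(s;\vartheta)}-1\bigr)ds\Bigr),
\end{equation*}
with $A$ solving \eqref{eq:A-eq}--\eqref{eq:A-eq-initial}, and an analogous ansatz including the counting coordinate gives
\begin{equation*}
\mathbb{E}\bigl[e^{\vartheta N^n_t}\bigm|Z_0=n\bigr]=\exp\Bigl(n\bigl(C(t;\vartheta/\alpha)-\vartheta/\alpha\bigr)+\mu\!\int_0^t\!\bigl(e^{\alpha C(s;\vartheta/\alpha)}-1\bigr)ds\Bigr),
\end{equation*}
with $C$ solving \eqref{eq:C1}--\eqref{eq:C2}.

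For part (i), the natural Gärtner--Ellis limit at speed $n/t_n$ is
\begin{equation*}
\Lambda_Z(\theta)=\lim_{n\to\infty}\frac{t_n}{n}\log\mathbb{E}\bigl[e^{(\theta/t_n)Z^n_{t_nT}}\bigm|Z_0=n\bigr]=\lim_{n\to\infty}t_n A(t_nT;\theta/t_n),
\end{equation*}
since the $\mu$ contribution carries a prefactor $t_n/n\to 0$. The analytic crux is to control the ODE $A'=-\alpha A+e^{\alpha A}-1$ (critical case) launched from the small datum $\theta/t_n$ over the long interval $[0,t_nT]$. I would Taylor-expand $e^{\alpha A}-1-\alpha A=\alpha^2A^2/2+O(A^3)$ to reduce the equation to the explicitly solvable quadratic $\dot A=\alpha^2 A^2/2$, whose solution starting at $\eta$ is $\eta/(1-\alpha^2\eta t/2)$. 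Setting $\eta=\theta/t_n$ and $t=t_nT$ gives $\Lambda_Z(\theta)=\theta/(1-\alpha^2\theta T/2)$ for $\theta<2/(\alpha^2T)$ and $+\infty$ otherwise; a direct Fenchel--Legendre calculation then produces $\hat I_Z(x)=2(\sqrt{x}-1)^2/(\alpha^2T)$ for $x\geq 0$ (and $+\infty$ for $x<0$).

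For part (ii), I would apply the same programme with the scaling $\vartheta=\theta/t_n^2$, which matches the speed $n/t_n$ for the observable $N^n_{t_nT}/(nt_n)$ and leads to $\Lambda(\theta)=\lim_{n\to\infty}t_n C(t_nT;\theta/(\alpha t_n^2))$. The same quadratic expansion turns \eqref{eq:C1} into the Riccati equation $\dot C=\alpha^2 C^2/2+\alpha\eta$ with $\eta=\theta/(\alpha t_n^2)$, explicitly integrable in trigonometric form for $\eta>0$ (producing the $\tan$ branch) and in hyperbolic form for $\eta<0$ (producing the $\tanh$ branch, where the oddness of $\tanh$ matches the sign convention in the statement). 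The rate function $\hat I_N$ is then the Fenchel--Legendre transform of this $\Lambda$.

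The principal technical obstacle is to justify replacing the exponential nonlinearity by its quadratic approximation uniformly on the long interval $[0,t_nT]$. I would handle this by a straightforward ODE comparison: since the leading quadratic solution is $O(1/t_n)$ on $[0,t_nT]$ precisely when $\theta$ stays below the critical threshold, a Gronwall argument applied to the difference $A-\tilde A$ shows that the cubic error contributes only $O(t_n^{-2})$ to $A(t_nT;\theta/t_n)$, so that after multiplication by $t_n$ this discrepancy still vanishes; the analogous estimate treats $C$ in part (ii). A secondary point is verifying essential smoothness for Gärtner--Ellis, which follows from the strict convexity of $\Lambda_Z$ and $\Lambda$ on their domains (inherited from the ODE structure, as noted in the footnotes to Theorems~\ref{thm:1} and \ref{thm:2}) together with the boundary blow-up that supplies steepness at the edge of each domain.
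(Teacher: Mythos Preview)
Your proposal is correct and follows essentially the same route as the paper: compute the rescaled logarithmic moment generating functions via the ODEs for $A$ and $C$, reduce to the quadratic/Riccati approximation in the critical case, and apply G\"artner--Ellis. The only cosmetic difference is that the paper controls the cubic remainder by sandwiching $A$ (respectively $C$) between the solutions of $y'=(\tfrac{\alpha^2}{2}\pm\epsilon)y^2$ (respectively the analogous perturbed Riccati equations) via a nonlinear Gronwall/comparison argument and then sending $\epsilon\to 0$, rather than estimating $A-\tilde A$ directly; the paper also spells out a separate contradiction argument for the super-threshold range $\theta\ge 2/(\alpha^2 T)$ in part~(i) and explicitly checks the $C^1$ matching of the $\tan$ and $\tanh$ branches at $\theta=0$ in part~(ii), both of which fit comfortably within your outline.
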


The proof of this result relies on G\"{a}rtner-Ellis theorem and Gronwall's inequality for nonlinear ODEs (see, e.g., \cite[Theorem~42]{Dragomir}) which arise from the characterization of the moment generating functions of $Z_t$ and $N_t$.
%We defer the details to Section~\ref{sec:proof-critical}.

\subsubsection{Super--critical case} \label{sec:supercritical}
We next state the result for the super--critical case where $\alpha>\beta>0$. {Below, we use the convention that $\infty\cdot 0=0$.} %The proof of the result is similar to that of Theorem~\ref{thm:critical} and we leave the details to Section~\ref{sec:proof-super-critical}.
\begin{theorem}\label{thm:supercritical}
Assume that $\alpha>\beta>0$ and $0<T<1$. Let $t_{n}=\frac{\log n}{\alpha-\beta}$.
Then,

(i) $\mathbb{P}(\frac{Z^n_{t_{n}T}}{n^{1+T}}\in\cdot)$ satisfies
a large deviation principle on $\mathbb{R}^{+}$ with the speed $n^{T}$ and the rate function $\tilde{I}_{Z}(x)={0\cdot} 1_{x=1}+\infty\cdot 1_{x\neq 1}$.

(ii) $\mathbb{P}(\frac{N^n_{t_{n}T}}{n^{1+T}}\in\cdot)$ satisfies
a large deviation principle on $\mathbb{R}_{\geq 0}$ with the speed $n^{T}$ and the rate function $\tilde{I}_{N}(x)={0\cdot} 1_{x=\frac{1}{\alpha-\beta}}+\infty\cdot 1_{x\neq\frac{1}{\alpha-\beta}}$.
\end{theorem}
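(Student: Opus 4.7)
The plan is to apply the G\"{a}rtner--Ellis theorem. Since the two sequences $Z^n_{t_n T}/n^{1+T}$ and $N^n_{t_n T}/n^{1+T}$ are claimed to concentrate at single points, it suffices to show that for every $\theta\in\mathbb{R}$,
\begin{equation*}
\frac{1}{n^T}\log\mathbb{E}\!\left[e^{\theta Z^n_{t_n T}/n}\right]\to \theta
\quad\text{and}\quad
\frac{1}{n^T}\log\mathbb{E}\!\left[e^{\theta N^n_{t_n T}/n}\right]\to \frac{\theta}{\alpha-\beta};
\end{equation*}
the Legendre transform of a linear function is the indicator of a single point, which yields precisely the degenerate rate functions $\tilde I_Z$ and $\tilde I_N$.

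The two MGFs have explicit closed forms coming from the Markov property and the Feynman--Kac ansatz that underlies Theorems~\ref{thm:1} and \ref{thm:2}:
\begin{equation*}
\mathbb{E}\!\left[e^{\theta Z^n_t}\mid Z^n_0=n\right]=\exp\!\left(nA(t;\theta)+\mu\int_{0}^{t}\bigl(e^{\alpha A(s;\theta)}-1\bigr)ds\right),
\end{equation*}
\begin{equation*}
\mathbb{E}\!\left[e^{\theta N^n_t}\mid Z^n_0=n\right]=\exp\!\left(n\bigl(C(t;\theta/\alpha)-\theta/\alpha\bigr)+\mu\int_{0}^{t}\bigl(e^{\alpha C(s;\theta/\alpha)}-1\bigr)ds\right),
\end{equation*}
with $A$ and $C$ the ODE solutions in \eqref{eq:A-eq}--\eqref{eq:A-eq-initial} and \eqref{eq:C1}--\eqref{eq:C2}. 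Plugging in $t=t_n T$ and parameter $\theta/n$ in place of $\theta$, the two limits reduce to asymptotic analysis of $A(t_n T;\theta/n)$ and $C(t_n T;\theta/(n\alpha))$. Since the initial data are of order $1/n$, the solutions remain small on the whole interval $[0,t_n T]$.

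Writing $A(s;\theta/n)=(\theta/n)e^{(\alpha-\beta)s}+r(s)$ and using $e^{\alpha A}-1-\alpha A=O(A^2)$, a bootstrap/Gronwall argument yields $A(t_n T;\theta/n)=\theta n^{T-1}(1+o(1))$ uniformly on compact sets of $\theta$; this step is valid because $T<1$ forces the linearized size $(\theta/n)e^{(\alpha-\beta)t_n T}=\theta n^{T-1}$ to tend to $0$. Multiplying by $n/n^T$ produces the principal contribution $\theta$, while the $\mu$-integral is of order $O(n^{T-1})/n^T=o(1)$. An analogous analysis of the $C$-ODE, whose additional drift $\beta\theta/(n\alpha)$ is also $O(1/n)$, gives $C(t_n T;\theta/(n\alpha))-\theta/(n\alpha)=\frac{\theta(n^T-1)}{n(\alpha-\beta)}(1+o(1))$, so the corresponding limit equals $\theta/(\alpha-\beta)$.

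The main obstacle is the Gronwall bootstrap on the nonlinear ODEs over the long interval $[0,t_n T]$ with $t_n=\log n/(\alpha-\beta)$, where the linearized growth factor is exactly $n^T$. Uniformly for $\theta$ on compact sets (and for both signs), the quadratic remainder $O(A^2)$ from the nonlinear term $e^{\alpha A}-1$ must be shown to stay strictly smaller than the principal $n^{T-1}$ contribution; this is precisely where the assumption $T<1$ enters. Once the two linear limits have been established, the LDP lower bound is automatic (every open neighborhood of the concentration point has probability tending to $1$), and the upper bound follows from the Chernoff inequality by sending $\theta\to\pm\infty$ to produce an arbitrarily negative rate away from the concentration point.
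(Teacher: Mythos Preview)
Your proposal is correct and follows essentially the same approach as the paper: apply the G\"{a}rtner--Ellis theorem after showing that the scaled logarithmic moment generating functions converge to the linear limits $\theta$ and $\theta/(\alpha-\beta)$, which is done by controlling the nonlinear ODEs for $A$ and $C$ via Gronwall-type arguments over $[0,t_nT]$ and checking that the $\mu$-integral terms vanish. The only cosmetic difference is that the paper sandwiches $A$ and $C$ between explicit solutions of Bernoulli and Riccati comparison equations, whereas you linearize around the solution of the variational equation and bound the quadratic remainder; both implementations hinge on exactly the observation you flag, namely that $T<1$ forces the linearized size $|\theta|n^{T-1}$ to tend to zero so that the quadratic correction stays subdominant.
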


We remark that the sequence $\{t_{n}\}$ in Theorem \ref{thm:supercritical} can be taken to be more general.
We choose this particular $\{t_{n}\}$ for the simplicity of notation. Notice that
when $Z_{0}=n \rightarrow \infty$, the initial intensity is $\mu+n$ which is of the same order as $n$, and
assuming $\mu=0$, we can easily compute that $\mathbb{E}[Z^n_{t}]=ne^{(\alpha-\beta)t}$.
Thus choosing $t_{n}=\frac{\log n}{\alpha-\beta}$ gives $\mathbb{E}[Z^n_{t_{n}T}]=n^{1+T}$, which is notation-wise concise.

%%%%%%%%%%%%%%%%%%%%%%%%%%%%%%%%%%%%%%%%%%%%%%%%%%%%%%%%%%%%

\subsubsection{Sub--critical case} \label{sec:subcritical}

Finally, we state the large deviations results for the sub--critical case, i.e., $\beta>\alpha>0$. %The proofs are given in Section~\ref{sec:proof-subcritical}.
Given $Z_{0}=z$ where $z$ is a fixed constant and under the assumption $\beta>\alpha>0$, it is well known
that as $t\rightarrow\infty$, $\frac{N_{t}}{t}\rightarrow\frac{\mu}{1-\frac{\alpha}{\beta}}$ almost surely
and $\mathbb{P}(\frac{N_{t}}{t}\in\cdot)$ satisfies a large deviation principle, see e.g. \cite{Bordenave}.
So for $Z_{0}=n$, it is natural to study the large deviations for $\frac{N^n_{nT}}{n}$. % as $n\rightarrow\infty$.

\begin{theorem}\label{ScalarN}
Assume that $\beta>\alpha>0$. For any $T>0$, $\mathbb{P}(\frac{N^n_{nT}}{n}\in\cdot)$ satisfies
a scalar large deviation principle on $\mathbb{R}$ with the speed $n$ and the rate function
\begin{equation}\label{IConsistent}
I(x)=x\log\left(\frac{\beta x}{\alpha x+1+\mu\beta T}\right)
-x+\frac{\alpha x+1+\mu\beta T}{\beta},
\end{equation}
for $x\geq 0$ and $I(x)=+\infty$ otherwise.
\end{theorem}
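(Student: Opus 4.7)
The plan is to apply the G\"{a}rtner--Ellis theorem to $N^{n}_{nT}/n$ at speed $n$. The first step is to obtain a closed form for the logarithmic moment generating function. Following the same exponential--martingale strategy used in the proof of Theorem~\ref{thm:2}, I introduce $M_{t}=\exp(\theta N_{t}+A(t)Z_{t}+B(t))$ and choose $A,B$ so that the compensator of $M$ vanishes, with terminal conditions $A(nT)=B(nT)=0$. Reversing time via $a(\tau):=A(nT-\tau)$ and $b(\tau):=B(nT-\tau)$ gives the scalar ODEs $a'(\tau)=-\beta a(\tau)+e^{\theta+\alpha a(\tau)}-1$ with $a(0)=0$ and $b'(\tau)=\mu(e^{\theta+\alpha a(\tau)}-1)$ with $b(0)=0$; up to the shift $a=C(\cdot;\theta/\alpha)-\theta/\alpha$ the former is precisely \eqref{eq:C1}, so no new derivation is required. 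The martingale property then yields
\[
\Lambda_{n}(\theta):=\frac{1}{n}\log\mathbb{E}\bigl[e^{\theta N^{n}_{nT}}\mid Z_{0}=n\bigr]=a(nT)+\frac{b(nT)}{n}.
\]

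The core of the proof is the large--time analysis of this ODE in the sub--critical regime. Because $\beta>\alpha$, the right-hand side $F(a):=-\beta a+e^{\theta+\alpha a}-1$ is strictly convex in $a$ and attains its minimum where $\alpha e^{\theta+\alpha a}=\beta$; setting the minimum value equal to zero identifies the critical threshold $\theta_{c}:=\log(\beta/\alpha)-1+\alpha/\beta$. For every $\theta<\theta_{c}$, $F(a)=0$ has two real roots and the smaller root $a^{*}(\theta)$ is asymptotically stable with $F'(a^{*})<0$. A sign check of $F(0)=e^{\theta}-1$ combined with the convex shape of $F$ places $0$ in the basin of attraction of $a^{*}(\theta)$ in both the cases $\theta\geq 0$ and $\theta<0$, so $a(\tau)\to a^{*}(\theta)$ as $\tau\to\infty$. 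Then $b'(\tau)\to \mu(e^{\theta+\alpha a^{*}(\theta)}-1)=\mu\beta\,a^{*}(\theta)$, so a Ces\`{a}ro argument yields $b(nT)/n=T\cdot b(nT)/(nT)\to T\mu\beta\,a^{*}(\theta)$. Combining, $\Lambda(\theta):=\lim_{n\to\infty}\Lambda_{n}(\theta)=(1+\mu\beta T)\,a^{*}(\theta)$ for $\theta<\theta_{c}$, while for $\theta>\theta_{c}$ the ODE for $a$ blows up in finite time (since $F$ stays bounded away from $0$ and grows exponentially) so $\Lambda(\theta)=+\infty$.

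Differentiating the defining identity $\beta a^{*}=e^{\theta+\alpha a^{*}}-1$ in $\theta$ gives $da^{*}/d\theta=(1+\beta a^{*})/[\beta-\alpha(1+\beta a^{*})]$. At $\theta=\theta_{c}$ the critical condition forces $1+\beta a^{*}=\beta/\alpha$, so the denominator vanishes and $\Lambda'(\theta_{c}^{-})=+\infty$; together with the smoothness of $\Lambda$ on $(-\infty,\theta_{c})$ this is the essential smoothness required by G\"{a}rtner--Ellis, which then delivers the scalar LDP with speed $n$ and rate function $I(x)=\sup_{\theta}[\theta x-\Lambda(\theta)]$. To obtain the explicit form \eqref{IConsistent} I reparametrize the supremum using $y=a^{*}(\theta)$ and $\theta=\log(1+\beta y)-\alpha y$; the first-order condition collapses to $1+\beta y=x\beta/(\alpha x+1+\mu\beta T)$, and substituting back recovers the stated formula for $x\geq 0$. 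For $x<0$, $I(x)=+\infty$ is immediate from $N^{n}_{nT}\geq 0$ (equivalently, $a^{*}(\theta)\to-1/\beta$ as $\theta\to-\infty$, so $\theta x-\Lambda(\theta)\uparrow\infty$).

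The main obstacle is the second paragraph: the convex analysis pinpointing $\theta_{c}$, the convergence $a(\tau)\to a^{*}(\theta)$ for every $\theta<\theta_{c}$, the passage to the limit in $b(nT)/n$, and the verification of steepness at $\theta_{c}^{-}$. Once these are in place, the application of G\"{a}rtner--Ellis and the Legendre--transform computation become routine algebra.
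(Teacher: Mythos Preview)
Your proposal is correct and follows essentially the same route as the paper's proof. Both arguments compute the limiting logarithmic moment generating function from the affine ODE system, perform the convex analysis of the right-hand side to identify $\theta_{c}$ and the stable root, verify essential smoothness (steepness at $\theta_{c}^{-}$), apply G\"{a}rtner--Ellis, and then solve the Legendre transform explicitly; the only cosmetic difference is that you work with the shifted variable $a=C(\cdot;\theta/\alpha)-\theta/\alpha$ started at $0$, whereas the paper works with $C$ itself started at $\theta/\alpha$, which yields the equivalent limit $\Lambda(\theta)=(1+\mu\beta T)a^{*}(\theta)=x^{*}(\theta)-\theta/\alpha+(\mu\beta x^{*}(\theta)-\mu\theta\beta/\alpha)T$.
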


The proof of this result rely on G\"{a}rtner-Ellis theorem and asymptotic behavior of the solutions of certain nonlinear ODEs which arise from the characterization of the moment generating function of $N_t$.

\begin{remark}\label{DecompRemark}
We discuss the connections with existing results on large--time large deviations of Hawkes processes here. Since the dependence on the initial condition should be self-evident here, we omit the superscript $n$ for the processes $Z$ and $N$.
As we have discussed in \cite{GZ}, when $Z_0=n$, we can decompose $N_{t}=N_{t}^{(0)}+N_{t}^{(1)}$, where
$N^{(0)}$ is a simple point process with intensity $Z^{(0)}$, where
\begin{equation*}
dZ_{t}^{(0)}=-\beta Z_{t}^{(0)}dt+\alpha dN_{t}^{(0)},
\end{equation*}
with $Z_{0}^{(0)}=n$ and $N^{(1)}$ is a simple point process with intensity
\begin{equation*}
\lambda^{(1)}_{t}:=\mu+\int_{0}^{t}\alpha e^{-\beta(t-s)}dN_{s}^{(1)}.
\end{equation*}
That is, we can decompose the Hawkes process $N$ into the sum of $N^{(0)}$ and $N^{(1)}$,
where $N^{(0)}$ is a linear Markovian Hawkes process with zero base intensity
and initial intensity $Z_{0}^{(0)}=n$
and $N^{(1)}$ is a linear Markovian Hawkes process with nonzero base intensity $\mu>0$ and empty history, i.e., $N^{(1)}(-\infty,0]=0$.
This decomposition is valid due to the immigration-birth representation of linear Hawkes processes \cite{HawkesII}.
One of the key results from the immigration-birth representation is that
the two processes $N^{(0)}$ and $N^{(1)}$ are independent of each other.

By letting $\mu=0$ in Theorem \ref{ScalarN},
$\mathbb{P}(\frac{N_{nT}^{(0)}}{n}\in\cdot)$ satisfies a large deviation principle
with the rate function
\begin{equation*}
I^{(0)}(x)=x\log\left(\frac{\beta x}{\alpha x+1}\right)
-x+\frac{\alpha x+1}{\beta}.
\end{equation*}
On the other hand, from Bordenave and Torrisi \cite{Bordenave},
$\mathbb{P}(\frac{N_{nT}^{(1)}}{n}\in\cdot)$ satisfies a large deviation principle
with the rate function
\begin{equation*}
I^{(1)}(x)=T\left[\frac{x}{T}\log\left(\frac{\frac{x}{T}}{\mu+\frac{x}{T}\frac{\alpha}{\beta}}\right)
-\frac{x}{T}+\frac{x}{T}\frac{\alpha}{\beta}+\mu\right].
\end{equation*}
Since $N^{(0)}$ and $N^{(1)}$ are independent, we conclude that
$\mathbb{P}(\frac{N_{nT}}{n}\in\cdot)$ satisfies a large deviation principle with the rate function
\begin{equation*}
I(x)=\inf_{y+z=x}\{I^{(0)}(y)+I^{(1)}(z)\}.
\end{equation*}
Notice that $I^{(1)}(x)=\mu TI^{(0)}\left(\frac{x}{\mu T}\right) + \mu T \left(1 - \frac{1}{\beta}\right)$
and $I^{(0)}(x)$ is convex in $x$.
Hence, by Jensen's inequality, we conclude that
\begin{align*}
I(x)&=\inf_{0\leq y\leq x}\left\{I^{(0)}(x-y)+\mu TI^{(0)}\left(\frac{y}{\mu T}\right)\right\} + \mu T \left(1 - \frac{1}{\beta}\right)
\\
&=(1+\mu T)\inf_{0\leq y\leq x}
\left\{\frac{1}{1+\mu T}I^{(0)}(x-y)+\frac{\mu T}{1+\mu T}I^{(0)}\left(\frac{y}{\mu T}\right)\right\} + \mu T \left(1 - \frac{1}{\beta}\right)
\nonumber
\\
&=(1+\mu T)I^{(0)}\left(\frac{1}{1+\mu T}(x-y)+\frac{\mu T}{1+\mu T}\frac{y}{\mu T}\right) + \mu T \left(1 - \frac{1}{\beta}\right)
\nonumber
\\
&=(1+\mu T)I^{(0)}\left(\frac{x}{1+\mu T}\right)+ \mu T \left(1 - \frac{1}{\beta}\right),
\nonumber
\end{align*}
which can be easily verified to be consistent with \eqref{IConsistent}.
\end{remark}

The next result is complementary to Theorem \ref{ScalarN}.

\begin{theorem} \label{ScalarN2}
Assume that $\beta>\alpha>0$ and $\mu>0$.
Let $t_{n}$ be a positive sequence that goes to infinity as $n\rightarrow\infty$.

(i) If $\lim_{n\rightarrow\infty}\frac{t_{n}}{n}=0$, then,
for any $T>0$, $\mathbb{P}(\frac{N^n_{t_{n}T}}{n}\in\cdot)$ satisfies
a large deviation principle on $\mathbb{R}_{\geq 0}$ with the speed $n$ and the rate function
\begin{equation*}
I^{(0)}(x)=x\log\left(\frac{\beta x}{\alpha x+1}\right)
-x+\frac{\alpha x+1}{\beta}.
\end{equation*}

(ii) If $\lim_{n\rightarrow\infty}\frac{t_{n}}{n}=\infty$, then,
for any $T>0$, $\mathbb{P}(\frac{N^n_{t_{n}T}}{t_{n}}\in\cdot)$ satisfies
a large deviation principle on $\mathbb{R}_{\geq 0}$ with the speed $t_{n}$ and the rate function
\begin{equation*}
I^{(1)}(x)=T\left[\frac{x}{T}\log\left(\frac{\frac{x}{T}}{\mu+\frac{x}{T}\frac{\alpha}{\beta}}\right)
-\frac{x}{T}+\frac{x}{T}\frac{\alpha}{\beta}+\mu\right].
\end{equation*}
\end{theorem}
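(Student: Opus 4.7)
The backbone is the immigration--birth decomposition $N^n_t = N^{(0),n}_t + N^{(1)}_t$ from Remark~\ref{DecompRemark}, where $N^{(0),n}$ is the zero-base-intensity Hawkes process started from $Z_0 = n$, and $N^{(1)}$ is the independent Hawkes process with base intensity $\mu$ and empty history. By independence the log-MGF splits additively, and the plan is to apply the G\"artner--Ellis theorem in each part at the appropriate speed, exploiting that at speed $n$ only the $N^{(0),n}$ contribution survives while at speed $t_n$ only the $N^{(1)}$ contribution survives.

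The MGFs admit closed-form expressions via Kolmogorov's backward equation. Substituting the ansatz $\mathbb{E}[e^{\theta N^{(0)}_t}\mid Z^{(0)}_0 = z] = e^{zG(t;\theta)}$ yields
\begin{equation*}
\mathbb{E}\bigl[e^{\theta N^{(0),n}_t}\bigr] = e^{nG(t;\theta)},\qquad G'(t;\theta) = -\beta G(t;\theta) + e^{\theta}e^{\alpha G(t;\theta)} - 1,\quad G(0;\theta) = 0,
\end{equation*}
and the analogous ansatz $\mathbb{E}[e^{\theta N^{(1)}_t}] = e^{H(t;\theta)}$ leads to $H'(t;\theta) = \mu(G'(t;\theta) + \beta G(t;\theta))$ with $H(0;\theta) = 0$. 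In the subcritical regime $\beta > \alpha$, a phase-line analysis of the scalar ODE for $G$ shows that for $\theta$ below the threshold $\theta_c := \log(\beta/\alpha) - 1 + \alpha/\beta$ there is a smaller stable fixed point $G^{*}(\theta)$ determined by $\beta G^{*}(\theta) + 1 = e^{\theta + \alpha G^{*}(\theta)}$, with $G(t;\theta) \to G^{*}(\theta)$ monotonically as $t \to \infty$. Consequently $H(t;\theta)/t \to \mu\beta G^{*}(\theta) =: \Gamma(\theta)$, which recovers the Bordenave--Torrisi cumulant generating function in closed form.

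For part (i), independence gives
\begin{equation*}
\frac{1}{n}\log\mathbb{E}\bigl[e^{\theta N^n_{t_n T}}\bigr] = G(t_n T;\theta) + \frac{t_n T}{n}\cdot\frac{H(t_n T;\theta)}{t_n T}.
\end{equation*}
As $n \to \infty$ with $t_n/n \to 0$, the first term converges to $G^{*}(\theta)$ while the second vanishes, since $H(t_n T;\theta)/(t_n T) \to \Gamma(\theta)$ is bounded. The limiting log-MGF is thus $\Lambda_{\mathrm{i}}(\theta) = G^{*}(\theta)$; parametrizing $y = \beta G^{*}(\theta) + 1$ gives $\theta = \log y + \alpha/\beta - \alpha y/\beta$, from which one verifies $\Lambda_{\mathrm{i}}^{*}(x) = I^{(0)}(x)$. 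Part (ii) is analogous but at speed $t_n$: the identity
\begin{equation*}
\frac{1}{t_n}\log\mathbb{E}\bigl[e^{\theta N^n_{t_n T}}\bigr] = \frac{n}{t_n}\,G(t_n T;\theta) + \frac{H(t_n T;\theta)}{t_n}
\end{equation*}
makes the first term vanish because $n/t_n \to 0$, while the second converges to $T\Gamma(\theta) = T\mu\beta G^{*}(\theta)$. Its Legendre transform equals $T\mu\beta\,I^{(0)}(x/(T\mu\beta))$, which coincides with $I^{(1)}(x)$ by direct substitution.

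The main obstacle is verifying the G\"artner--Ellis hypotheses rigorously, in particular the essential smoothness and steepness of the limiting cumulants at the boundary $\theta_c$ of their effective domain, which corresponds to the tangency where the two fixed points of the $G$-ODE coalesce. Implicit differentiation of $\beta G^{*}(\theta) + 1 = e^{\theta + \alpha G^{*}(\theta)}$ gives $G^{*\prime}(\theta) = (\beta G^{*}(\theta) + 1)/(\beta - \alpha(\beta G^{*}(\theta) + 1))$, which diverges as $\theta \uparrow \theta_c$ and supplies the required steepness. The remaining work -- pointwise convergence $G(t_n T;\theta) \to G^{*}(\theta)$ and $H(t_n T;\theta)/(t_n T) \to \Gamma(\theta)$ on compact subsets of $(-\infty,\theta_c)$ via standard ODE stability, and the algebraic identification of the Legendre transforms with $I^{(0)}$ and $I^{(1)}$ -- is routine.
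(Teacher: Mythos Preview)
Your proposal is correct and is, up to a change of variables, the paper's own argument. The paper does not invoke the immigration--birth decomposition in the proof; it works directly with the single moment generating function $\mathbb{E}[e^{\theta N^n_t}] = \exp\{(C(t;\theta/\alpha) - \theta/\alpha)n + D(t;\theta/\alpha)\}$, shows that $C(t;\theta/\alpha) \to x^{\ast}(\theta)$ (the smaller fixed point of $-\beta x + e^{\alpha x} - 1 + \beta\theta/\alpha = 0$) and $D(t;\theta/\alpha)/t \to \mu\beta x^{\ast}(\theta) - \mu\beta\theta/\alpha$, and then applies G\"artner--Ellis at speed $n$ or $t_n$ just as you do. Your functions are a shift of the paper's: $G(t;\theta) = C(t;\theta/\alpha) - \theta/\alpha$ and $H(t;\theta) = D(t;\theta/\alpha)$, so that $G^{\ast}(\theta) = x^{\ast}(\theta) - \theta/\alpha$ and the fixed-point equation, the steepness check at $\theta_c$, and the Legendre transforms are literally the same computations. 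What the decomposition buys you is a transparent probabilistic reason why one term drops out at each speed (the piece of order $t_n$ is negligible at speed $n$, and vice versa), whereas the paper's direct route avoids appealing to independence of $N^{(0)}$ and $N^{(1)}$ but leaves that intuition implicit in the algebra.
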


Let us give some intuition behind the results of Theorem \ref{ScalarN2}.
Recall the decomposition $N_{t}=N_{t}^{(0)}+N_{t}^{(1)}$ from Remark \ref{DecompRemark}.
Notice that $N_{t_{n}T}^{(1)}$ is of order $t_{n}$ and that is because of the large--time law of large numbers of
the linear Hawkes process with a fixed initial intensity $\mu$ and empty history.
Also notice that $N_{t_{n}T}^{(0)}$ is of order $n$.
Let us explain. Notice that from $Z^{(0)}_0=n$ we obtain $\mathbb{E}[N_{t_{n}T}^{(0)}]=\int_{0}^{t_{n}T}\mathbb{E}[Z^{(0)}_{s}]ds
=n\int_{0}^{t_{n}T}e^{(\alpha-\beta)s}ds$.
As $n\rightarrow\infty$, we have $t_{n}T\rightarrow\infty$. But
$\int_{0}^{\infty}e^{(\alpha-\beta)s}ds=\frac{1}{\beta-\alpha}<\infty$ for $\beta>\alpha$.
Thus, $N_{t_{n}T}^{(0)}$ is of order $n$.
Hence, when $\lim_{n\rightarrow\infty}\frac{t_{n}}{n}=0$, $N^{(0)}$ `dominates' and we have result (i),
and when $\lim_{n\rightarrow\infty}\frac{t_{n}}{n}=\infty$, $N^{(1)}$ `dominates' and we obtain (ii).

So far we have discussed the large deviations for the process $N^n$ in the sub-critical case. We next consider the large deviations for the process $Z^n$ in the regime where
$Z_0=n$ and the time are both sent to infinity. Below, we use the convention that $0\cdot\infty=0$.
\begin{theorem}\label{subZLDP}
Assume that $\beta>\alpha>0$, $0<\gamma<1$, and $t_{n}:=\frac{\log n}{\beta-\alpha}$.
For any $0<T<1-\gamma$, $\mathbb{P}(\frac{Z^n_{t_{n}T}}{n^{1-T}}\in\cdot)$ satisfies
a scalar large deviation principle on $\mathbb{R}^{+}$ with the speed $n^{1-\gamma-T}$ and the rate function
\begin{equation*}
\bar {I}_{Z}(x)=0\cdot 1_{x=1}+\infty\cdot 1_{x\neq 1}.
\end{equation*}
\end{theorem}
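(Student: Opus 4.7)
The plan is to apply the Gärtner--Ellis theorem to the rescaled variable $Y_n := Z^n_{t_n T}/n^{1-T}$ at speed $n^{1-\gamma-T}$. The starting point is the affine structure of the Markovian Hawkes process, which yields an exact formula for the moment generating function of $Z_t$. Making the ansatz $\mathbb{E}[e^{\theta Z_t}|Z_0=z]=\exp(a(t)+b(t)z)$ inside the Kolmogorov backward equation $\partial_t u=\mathcal{L}u$ with generator
\[
\mathcal{L}f(z)=-\beta z f'(z)+(\mu+z)\bigl(f(z+\alpha)-f(z)\bigr),
\]
and matching coefficients of $1$ and $z$ forces $b'=-\beta b+e^{\alpha b}-1$, $a'=\mu(e^{\alpha b}-1)$, with $b(0)=\theta$, $a(0)=0$. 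Hence $b(t)=A(t;\theta)$ as in \eqref{eq:A-eq}--\eqref{eq:A-eq-initial}, and
\[
\log\mathbb{E}\bigl[e^{\theta Z_t}\mid Z_0=z\bigr]=z\,A(t;\theta)+\mu\int_0^t\bigl(e^{\alpha A(s;\theta)}-1\bigr)ds.
\]

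Applying this identity with $z=n$, $t=t_n T=T\log n/(\beta-\alpha)$, and $\theta$ replaced by $\theta n^{-\gamma}$, the normalized cumulant of $Y_n$ takes the form
\[
\Lambda_n(\theta)=\frac{n\,A(t_n T;\theta n^{-\gamma})}{n^{1-\gamma-T}}+\frac{\mu}{n^{1-\gamma-T}}\int_0^{t_n T}\bigl(e^{\alpha A(s;\theta n^{-\gamma})}-1\bigr)ds.
\]
Since the initial data $\theta n^{-\gamma}$ is small for large $n$, I would linearize \eqref{eq:A-eq} through the change of variable $\tilde A(s):=e^{(\beta-\alpha)s}A(s;\theta n^{-\gamma})$. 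A direct computation gives $\tilde A'(s)=e^{(\beta-\alpha)s}(e^{\alpha A}-1-\alpha A)\geq 0$, and the convexity bound $e^{\alpha A}-1-\alpha A\leq C A^2$ for bounded $|A|$ converts this into the Riccati-type inequality $\tilde A'(s)\leq C e^{-(\beta-\alpha)s}\tilde A(s)^2$. Comparing $-\frac{d}{ds}(1/\tilde A)$ with $Ce^{-(\beta-\alpha)s}$ and using $\beta>\alpha$ to make the integral finite yields $\tilde A(s)=\theta n^{-\gamma}(1+O(n^{-\gamma}))$ uniformly in $s\geq 0$, hence $A(t_n T;\theta n^{-\gamma})=\theta n^{-\gamma-T}(1+O(n^{-\gamma}))$. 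Consequently the first term of $\Lambda_n(\theta)$ converges to $\theta$; the second, using $|e^{\alpha A}-1|\le C|\theta|n^{-\gamma}e^{-(\beta-\alpha)s}$, is of order $n^{-\gamma}/n^{1-\gamma-T}=n^{T-1}\to 0$ since $T<1-\gamma<1$. Therefore $\Lambda_n(\theta)\to\Lambda(\theta):=\theta$ for every $\theta\in\mathbb{R}$.

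Because $\Lambda$ is finite, convex and differentiable on all of $\mathbb{R}$, the Gärtner--Ellis theorem delivers the scalar LDP for $Y_n$ at speed $n^{1-\gamma-T}$ with rate function
\[
\bar I_Z(x)=\sup_{\theta\in\mathbb{R}}\{\theta x-\theta\}=\sup_{\theta\in\mathbb{R}}\theta(x-1)=0\cdot\mathbf{1}_{x=1}+\infty\cdot\mathbf{1}_{x\neq 1},
\]
which, restricted to $\mathbb{R}^+$ where $Y_n$ lives, is exactly the claimed rate. The main technical obstacle is the uniform control of $A(s;\theta n^{-\gamma})$ on the growing interval $[0,t_n T]$ with $t_n=\log n/(\beta-\alpha)\to\infty$. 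For $\theta>0$ the linearized drift $-(\beta-\alpha)A$ is restoring (owing to $\beta>\alpha$), preventing blow-up and trapping $A$ in $(0,\theta n^{-\gamma})$; for $\theta<0$ the fact that $A\equiv 0$ is a fixed point of \eqref{eq:A-eq} traps $A$ in $(\theta n^{-\gamma},0)$. In both cases the quadratic nonlinearity $e^{\alpha A}-1-\alpha A=O(A^2)$ drives the Riccati comparison to the same conclusion, so the two signs of $\theta$ are handled in parallel.
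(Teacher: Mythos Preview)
Your proof is correct and follows essentially the same route as the paper: compute the normalized log moment generating function via the affine formula \eqref{eq:u}, show that its limit equals $\theta$ for every $\theta\in\mathbb{R}$, and then invoke the G\"artner--Ellis theorem. The paper's own argument (only sketched, referring back to the proof of Theorem~\ref{thm:supercritical}) bounds $A(\cdot;\theta n^{-\gamma})$ by comparing with the explicitly solvable Bernoulli equation $y'=(\alpha-\beta)y\pm Ky^{2}$, whereas you reach the same estimate via the integrating-factor substitution $\tilde A(s)=e^{(\beta-\alpha)s}A(s)$ and a Riccati inequality; these are equivalent packagings of the same quadratic comparison.
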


We remark that similar as in Theorem~\ref{thm:supercritical}, here the sequence $\{t_{n}\}$ in Theorem \ref{subZLDP} can be taken to be more general.
We choose this particular $\{t_{n}\}$ for the simplicity of notation.

%%%%%%%%%%%%%%%%%%%%%%%%%%%%%%%%%%%%%%%%%%%%%%%%

\section{Examples and Applications} \label{sec:app}

This section is devoted to two examples that apply the large deviations principle that we have
developed in the previous sections. The first example is on ruin probabilities in the insurance setting, and the second example is on the finite--horizon maximum of queue lengths in an infinite--server queue. We assume Markovian Hawkes processes can adequately model the clustering behavior of events occurring in each application. While this assumption may not be completely realistic, it enables us to illustrate the potential strength of our large deviations analysis.
Throughout this section, we write $a_n = o(n)$ as $n \rightarrow \infty$ if the sequence of numbers $a_n$ satisfies $\lim_{n \rightarrow \infty } a_n /n =0$.

\subsection{Example 1: Ruin probability in insurance risk theory}
In this example, we apply our large deviations results to approximate the finite horizon ruin probability in a risk model in insurance mathematics.

Hawkes processes have been applied to insurance settings to accommodate
the clustering arrival of claims observed in practice, see, e.g. \cite{DassiosII,Jang,Stabile,ZhuRuin}. When a natural disaster such as an earthquake occurs, the claims typically will not be reported following a constant intensity as in a homogeneous Poisson process. Instead, we expect
clustering effect in the claim arrivals after a catastrophe. In addition, the arrival rate of claims is typically high right after a catastrophe event. So one might use
Hawkes processes with large initial intensities to model such claim arrival processes, and it is of interest to study the finite horizon ruin probability in a risk model where the claim arrivals are modeled by such Hawkes processes. %See, e.g., \cite{Asmussen} for background on ruin theory.

%\footnote{In practice, an insurer can remain solvent after catastrophes by purchasing catastrophe reinsurance or sponsoring catastrophe bonds.}
%As a simple example, consider the probability that the aggregate claim arrivals exceed the level $n x$ by time $T$, i.e.,
%$\mathbb{P}(N^n(T) \ge n x)$. This can be viewed as the ruin probability when the insurance company has initial wealth $nx$,
%receives zero premium and size one claims from its customers.
%To understand the asymptotic behavior of this probability, it would require a large deviation analysis of $N^n$, which we have done.

To study the ruin probability, let us consider the surplus process of the insurance company:
\begin{equation*}
X^{n}_t =X^n_0+\rho t-\sum_{i=1}^{N^{n}_t}Y_{i}.
\end{equation*}
Here,
$N^{n}$ is the claim arrival process modeled as a Hawkes process with an initial intensity $\mu+n$, and an exciting function $\phi(t)=\alpha e^{-\beta t}$; the constant $\rho>0$ is the premium rate, and we assume it is independent of $n$ for simplicity;
$\{Y_{i}\}$ are the non-negative claim sizes which are independent and identically distributed, and $\{Y_{i}\}$ is
independent of $N^{n}$ and $n$. Note that we use $N^n$ to emphasize the dependence on $Z_0=n$.

We are interested in approximating the finite horizon ruin probability $\mathbb{P}(\tau^{n}\leq T)$ for fixed $T>0$ and large $n$, where
$\tau^{n}$ is the ruin time of an insurance company and it is defined as follows:
\begin{equation*}
\tau^{n}:=\inf\{t>0:X^{n}_t\leq 0\}.
\end{equation*}
We assume that the initial surplus at time $0$ is given by
$X^{n}(0)=nx$, which is large, as $n\rightarrow\infty$.
In the usual setting of the finite horizon ruin probability
problem for the classical risk model, the ruin probability
is exponentially small when the initial surplus is large, see e.g. \cite{Asmussen}.
In our example, because
$N^{n}_t$ is of the order $n$, the ruin will occur at a finite time
with probability one.

Notice that $N^{n}$ satisfies a functional law of large numbers, see \cite{GZ},
\begin{equation*}
\sup_{0\leq t\leq T}\left|\frac{N^{n}_t}{n}-\psi(t)\right|\rightarrow 0, \quad \text{almost surely as $n\rightarrow\infty$},
\end{equation*}
where
$\psi(t):=
\frac{e^{(\alpha-\beta)t}-1}{\alpha-\beta}$ for $\alpha\neq\beta$,
and $\psi(t):=t$ for $\alpha=\beta$.
Therefore, as $n\rightarrow\infty$,
\begin{equation*}
\tau^{n}\rightarrow\tau^{\infty}
:=\inf\{t>0:x-\mathbb{E}[Y_{1}]\psi(t)=0\}, \quad \text{almost surely.}
\end{equation*}
It is easy to compute that (assuming that $(\alpha-\beta)\frac{x}{\mathbb{E}[Y_{1}]}+1>0$;
otherwise $\tau^{\infty}$ will be $\infty$.)
\begin{equation*}
\tau^{\infty}=
\begin{cases}
\frac{\log\left((\alpha-\beta)\frac{x}{\mathbb{E}[Y_{1}]}+1\right)}{\alpha-\beta} &\text{for $\alpha\neq\beta$},
\\
\frac{x}{\mathbb{E}[Y_{1}]} &\text{for $\alpha=\beta$}.
\end{cases}
\end{equation*}
{For any $T>\tau^{\infty}$, $\mathbb{P}(\tau^{n} \le T)\rightarrow 1$
as $n\rightarrow\infty$.
For any $T<\tau^{\infty}$, this probability will go to zero
exponentially fast as $n\rightarrow\infty$, and falls into
the large deviations regime. In the following we develop approximations for this probability $\mathbb{P}(\tau^{n} \le T)$.}

Let us assume that $\mathbb{E}[e^{\theta Y_{1}}]<\infty$ for any $\theta<\theta^{+}$
and $\mathbb{E}[e^{\theta Y_{1}}]=\infty$ otherwise,
where $\theta^{+}>0$ and we allow it to be $+\infty$.
We define $\mathcal{V}^{++}$ as the subspace of $D[0,\infty)$,
consisting of unbounded nonnegative increasing functions starting at zero at time zero
with finite variation over finite intervals
equipped with the vague topology,
see \cite{Puhalskii} .
A Mogulskii-type theorem says that, see e.g. Lemma 3.2. \cite{Puhalskii},
$\mathbb{P}\left(\left\{\frac{1}{n}\sum_{i=1}^{\lfloor nt\rfloor}Y_{i}, 0\leq t<\infty\right\}\in\cdot\right)$
satisfies a large deviation principle on $\mathcal{V}^{++}$ with the good rate function
\begin{equation*}
\int_{0}^{\infty}\overline \Lambda(g'_{1}(t))dt+\theta^{+}g_{2}(\infty), \quad \text{if $g=g_{1}+g_{2}\in\mathcal{V}^{++}, \quad g_{1} \in \mathcal{AC}_{0}[0,\infty)$},
\end{equation*}
where 
\begin{equation}\label{eq:lambda-bar}
\overline \Lambda(x):=\sup_{\theta\in\mathbb{R}}\left\{\theta x-\log\mathbb{E}[e^{\theta Y_{1}}]\right\},
\end{equation} 
and $g=g_{1}+g_{2}$ denotes the
Lebesgue decomposition of $g$ with respect to Lebesgue measure, where $g_2$ is the singular component and $g_{2}(\infty)=\lim_{t \rightarrow \infty} g_2(t)$.
Note that if $\theta^{+}=\infty$, then $g_{2}\equiv 0$.
Since $\{Y_{i}\}$ and $N^n$ are independent, then Theorem~\ref{thm:2} implies that
\[\mathbb{P}\left(\left\{\left(\frac{1}{n}N^n_{t},\frac{1}{n}\sum_{i=1}^{\lfloor ns\rfloor}Y_{i}\right),
0\leq t\leq T, 0\leq s<\infty\right\}\in\cdot\right)\]
satisfies a large deviation principle on $D[0,T]\times\mathcal{V}^{++}$\footnote{Here $D[0,T]$ is equipped
with Skorokhod topology. In Theorem~\ref{thm:1} and Theorem~\ref{thm:2} we proved first the large deviation principles
hold in the Skorokhod topology.} with the good rate function $I_{N}(h)+\int_{0}^{\infty}\overline \Lambda(g'_{1}(t))dt+\theta^{+}g_{2}(\infty)$, where the rate function $I_N(h)$ is given in Theorem~\ref{thm:2}.
It is easy to see that $\frac{1}{n}\sum_{i=1}^{N^n_t}Y_{i}=\frac{1}{n}\sum_{i=1}^{\lfloor n\cdot\frac{1}{n}N^n_t\rfloor}Y_{i}$.
Hence, by the continuity of the first-passage-time map, and the contraction principle, for any fixed $0<T<\tau^{\infty}$, we have
\begin{align}
\mathbb{P}(\tau^{n}\leq T)
&=e^{-n \cdot \inf_{h,g: x-g(h(T))\leq 0}\left\{I_{N}(h)+\int_{0}^{\infty}\overline \Lambda(g'_{1}(t))dt+\theta^{+}g_{2}(\infty)\right\}+o(n)}
\nonumber \\
&=e^{-n \cdot \inf_{h,g: x-g(h(T))\leq 0}\left\{I_{N}(h)+\int_{0}^{h(T)}\overline \Lambda(g'_{1}(t))dt+\theta^{+}g_{2}(h(T))\right\}+o(n)}, \label{eq:p-tau}
\end{align}
as $n\rightarrow\infty$.
We can replace $\infty$ by $h(T)$ in \eqref{eq:p-tau}
since $\overline{\Lambda}(x)\geq 0$ for any $x\geq 0$
and it is zero for $x=\mathbb{E}[Y_{1}]$ and $g_{2}$ is also non-decreasing so that $g_{2}(\infty)\geq g_{2}(h(T))$,
and thus the optimal $g$ satisfies $g'_{1}(t)=\mathbb{E}[Y_{1}]$ for $t>h(T)$
so that $\overline{\Lambda}(g'_{1}(t))=0$ for $t>h(T)$ and $g_{2}(\infty)=g_{2}(h(T))$. 

The expression \eqref{eq:p-tau} is not very informative, so we next simplify it to obtain
a more manageable expression which allows efficient numerical computations.
We can first fix $g_{2}(h(T))$ and then optimize over $g_{2}(h(T))$.
By the convexity of $\overline \Lambda(\cdot)$ and using Jensen's inequality, we obtain
\begin{equation*}
\int_{0}^{h(T)} \overline \Lambda(g'_{1}(t))dt
\geq h(T)\overline \Lambda\left(\frac{1}{h(T)}\int_{0}^{h(T)}g'_{1}(t)dt\right)
\geq h(T)\overline \Lambda\left(\frac{x-g_{2}(h(T))}{h(T)}\right),
\end{equation*}
where the second inequality is due to $x-g_{1}(h(T))-g_{2}(h(T))\leq 0$ and $\overline \Lambda(x)$ is non-decreasing
in $x$ for $x>\mathbb{E}[Y_{1}]$.
On the other hand, by considering
$g^{\ast}_{1}(t)=\frac{x-g_{2}(h(T))}{h(T)}t$, we have
\begin{equation*}
\int_{0}^{h(T)}\overline \Lambda((g^{\ast}_{1})'(t))dt
=h(T)\overline \Lambda\left(\frac{x-g_{2}(h(T))}{h(T)}\right).
\end{equation*}
This implies that \eqref{eq:p-tau} can be reduced to the following:
\begin{equation*}
\mathbb{P}(\tau^{n}\leq T)
=
e^{-n \cdot \inf_{h, z\leq x}\left\{I_{N}(h)+h(T) \overline \Lambda\left(\frac{x-z}{h(T)}\right)+\theta^{+}z\right\}+o(n)},
\end{equation*}
as $n\rightarrow\infty$. Therefore, we have
\begin{equation*}
\mathbb{P}(\tau^{n}\leq T)
=
e^{-n \cdot \inf_{y>0,z\leq x}\inf_{h: h(T)=y}\left\{I_{N}(h)+y \overline \Lambda\left(\frac{x-z}{y}\right)+\theta^{+}z\right\}+o(n)}.
\end{equation*}
To further simplify the above expression, we note from Theorem~\ref{thm:2} that $\mathbb{P}(N_T^n/n\in\cdot)$ satisfies
a large deviation principle with the rate function
\begin{eqnarray*}
H(x;T)=\inf_{h:h(T)=x}I_{N}(h)
= \sup_{\theta\in\mathbb{R}}\left\{\theta x-C\left(T;\frac{\theta}{\alpha}\right)+\frac{\theta}{\alpha}\right\},
\end{eqnarray*}
where $C$ solves the nonlinear ODE given in \eqref{eq:C1} and \eqref{eq:C2}.
Hence, we conclude that
\begin{equation*}
\mathbb{P}(\tau^{n}\leq T)
=
\exp(-n \cdot I_{\tau}(x; T)+o(n)) \quad \text{as $n \rightarrow \infty,$}\label{eq:simplify}
\end{equation*}
where
\begin{eqnarray*}
I_{\tau}(x;T) := \inf_{y>0, z\leq x}\left\{H(y;T)+y \overline \Lambda\left(\frac{x-z}{y}\right)+\theta^{+}z\right\}.
\end{eqnarray*}
We remark that the function $H(y;T)+y \overline \Lambda\left(\frac{x-z}{y}\right)+\theta^{+}z$ is convex in $y$. This is because
$H(y;T)$ is convex in $y$ and one can also verify directly from the convexity of $\overline \Lambda$ that $y \overline \Lambda\left(\frac{x}{y}\right)$ in convex in $y$. It is also clear that $H(y;T)+y \overline \Lambda\left(\frac{x-z}{y}\right)+\theta^{+}z$ is convex in $z$.
So we can numerically obtain $I_{\tau}(x; T)$ efficiently.

%Similarly $\Lambda(y)$ is convex in $y$ and we can compute that
%\begin{align}
%\frac{\partial^{2}}{\partial y^{2}}\left[y\Lambda\left(\frac{x}{y}\right)\right]
%&=\frac{\partial}{\partial y}\left\{\Lambda\left(\frac{x}{y}\right)
%-\frac{x}{y}\Lambda'\left(\frac{x}{y}\right)\right\}
%\\
%&=-\frac{x}{y^{2}}\Lambda'\left(\frac{x}{y}\right)
%+\frac{x}{y^{2}}\Lambda'\left(\frac{x}{y}\right)
%+\frac{x^{2}}{y^{3}}\Lambda''\left(\frac{x}{y}\right)
%=\frac{x^{2}}{y^{3}}\Lambda''\left(\frac{x}{y}\right)>0.
%\nonumber
%\end{align}
%Thus, what's inside the infimum is convex.

We now present a numerical example when $Y_{i}$ has a Poisson distribution with rate $1$. Then it is easy to see from \eqref{eq:lambda-bar} that
$\bar \Lambda(v)=v\log v-v+1$ for $v>0$ and $\bar \Lambda(v)=+\infty$ otherwise.
Also in this case $\theta^{+}=\infty$.
%Suppose $Y_i$ has an exponential distribution with rate $1$. Then we have
%\begin{eqnarray}
%\Lambda(v) = v - 1 - \ln (v), \quad \text{for $v>0,$}
%\end{eqnarray}
%and $\Lambda(v) = +\infty$ otherwise.
Hence, we obtain
\begin{eqnarray}\label{eq:I-tau}
 I_{\tau}(x;T) = \inf_{y>0}\left\{H(y;T)
+y \cdot \left( \frac{x}{y} - 1 - \log \left(\frac{x}{y}\right) \right) \right\}.
\end{eqnarray}
See Figure~\ref{fig:I} for a numerical illustration.

%\begin{figure}[h]
%    \centering
%        \includegraphics[width=0.75\textwidth]{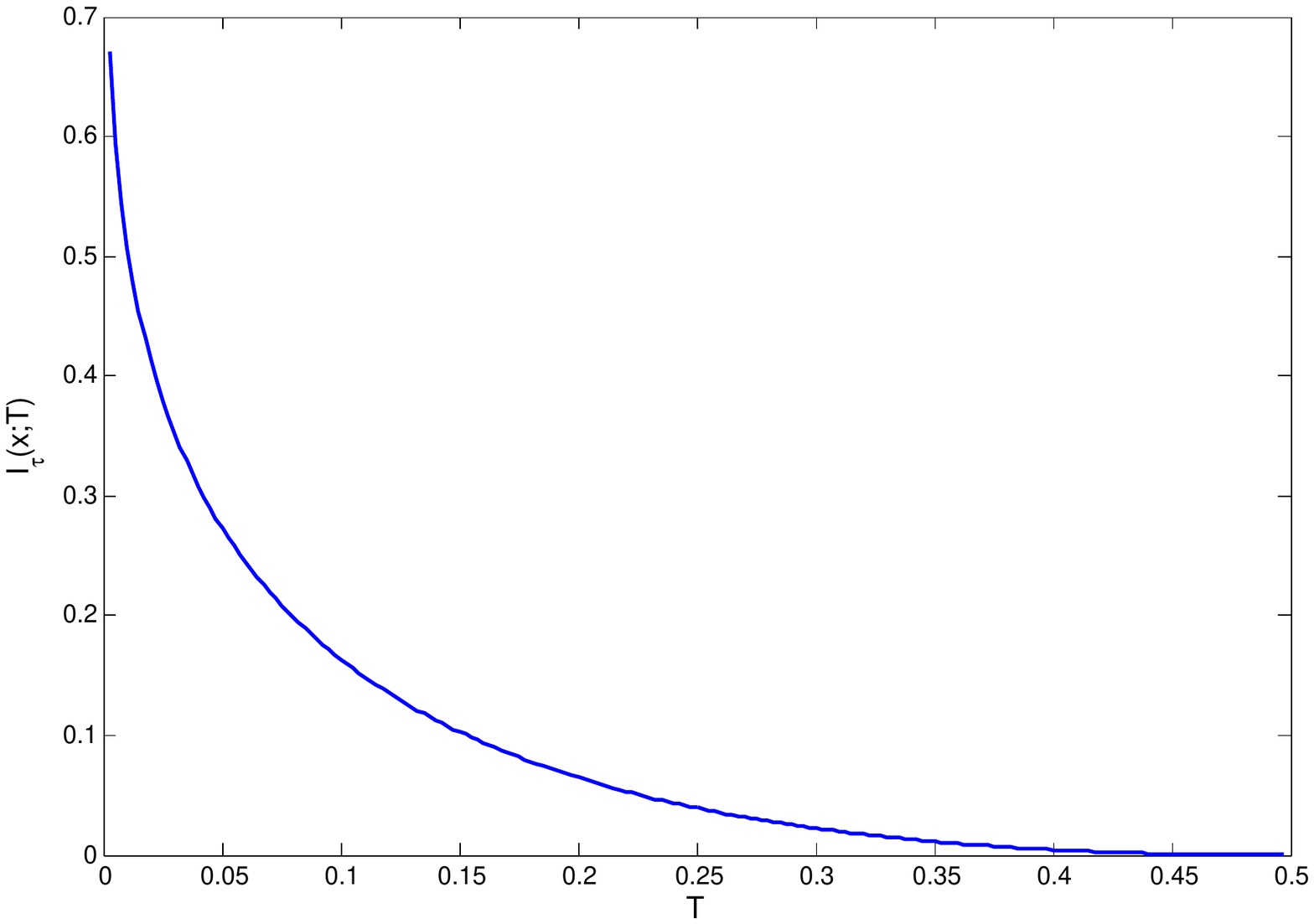}
%        \caption{This figure plots $I_{\tau}(x;T)$ in \eqref{eq:I-tau} as a function of $T \in (0,0.5)$. The other parameters are fixed and given by: $\alpha=\beta=1$, and $x=0.5$.}
%\end{figure}\label{fig:1}

%\begin{figure} [h]
%\centering
%\begin{subfigure}{.45\textwidth}
%  \centering
%  \includegraphics[width=.4\linewidth]{fig1.pdf}
%  \caption{$I_{\tau}(x;T)$ as a function of $T \in (0,0.5)$, for fixed $x=0.5$. }
%  %\label{fig:sub1}
%\end{subfigure}%
%\begin{subfigure}{.45\textwidth}
%  \centering
%  \includegraphics[width=.4\linewidth]{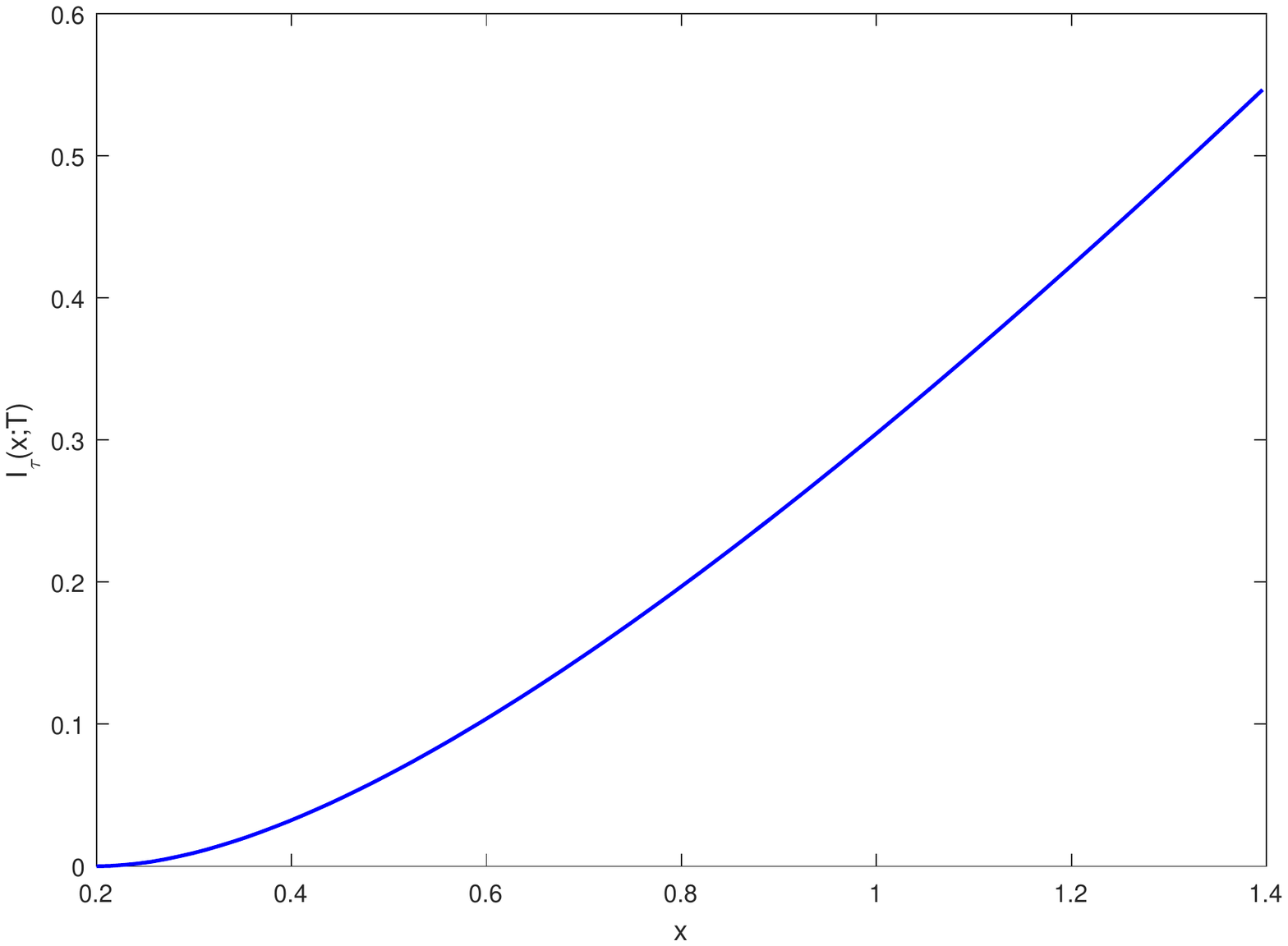}
%  \caption{$I_{\tau}(x;T)$ as a function of $x \in (0.2,1.4)$, for fixed $T=0.2$.}
%  %\label{fig:sub2}
%\end{subfigure}
%\caption{Illustration of $I_{\tau}(x;T)$. The parameters are given by $\alpha=\beta=1$.}
%\label{fig:1}
%\end{figure}

\begin{figure}[h]


\centering     %%% not \center
\subfigure[\text{$I_{\tau}(x;T)$ as a function of $T$. $x=0.5$ is fixed.}]{\includegraphics[width=.47\linewidth, height=5cm]{fig1}}
\subfigure[\text{$I_{\tau}(x;T)$ as a function of $x$. $T=0.2$ is fixed.}]{\includegraphics[width=.47\linewidth, height=5cm]{fig1x}}
\caption{This figure plots $I_{\tau}(x;T)$ in \eqref{eq:I-tau}. The parameters are given by: $\alpha=\beta =1$.}
\label{fig:I}
\end{figure}

%\begin{figure}%
%    \centering
%    \subfloat[label 1]{{\includegraphics[width=5cm]{fig1} }}%
%    \qquad
%    \subfloat[label 2]{{\includegraphics[width=5cm]{fig1x} }}%
%    \caption{2 Figures side by side}%
%    \label{fig:example}%
%\end{figure}

%\begin{figure}[!tbp]
%  \centering
%  \begin{minipage}[b]{0.5\textwidth}
%    \includegraphics[width=\textwidth]{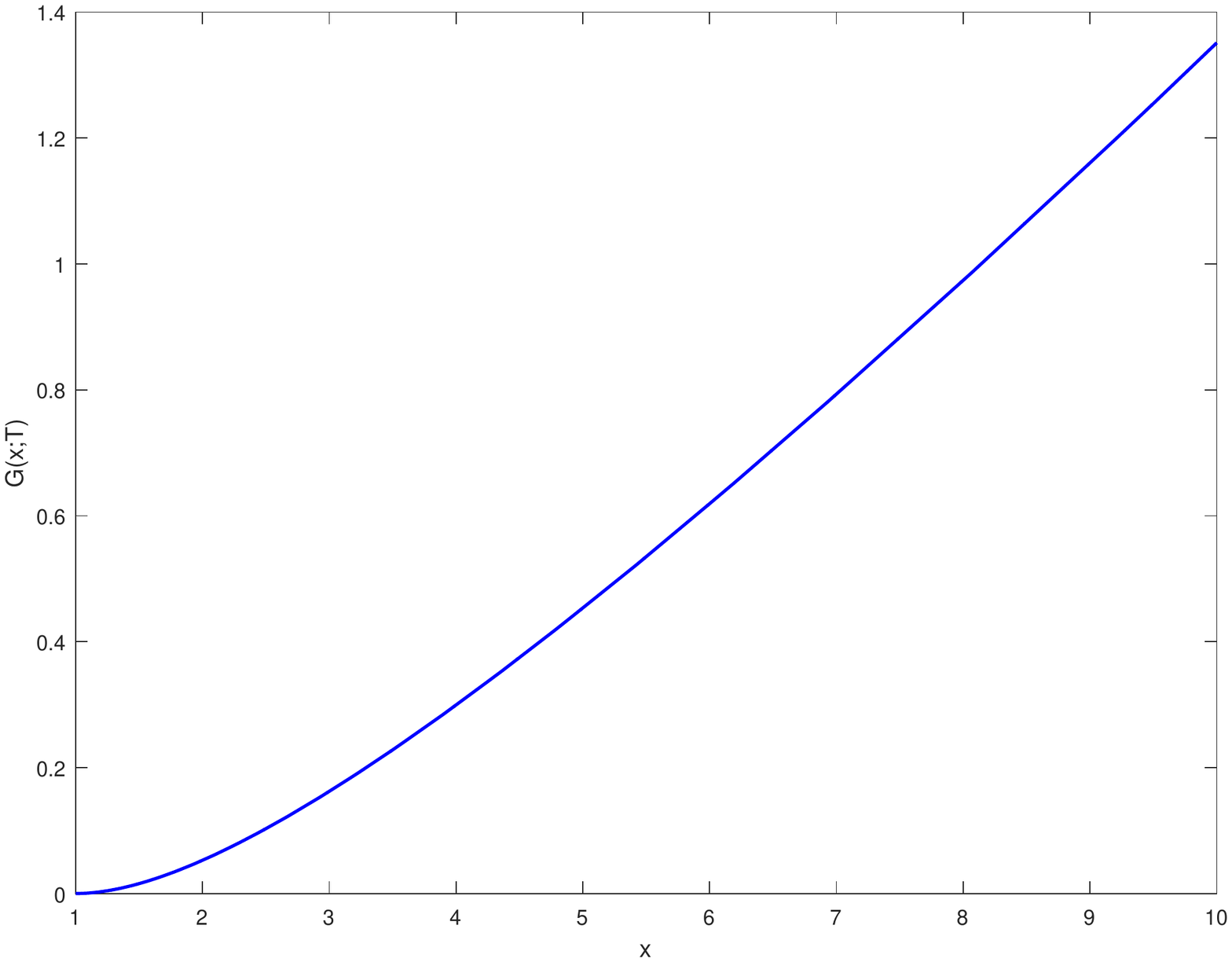}
%    \caption{Flower one.}
%  \end{minipage}
%  \hfill
%  \begin{minipage}[b]{0.5\textwidth}
%    \includegraphics[width=\textwidth]{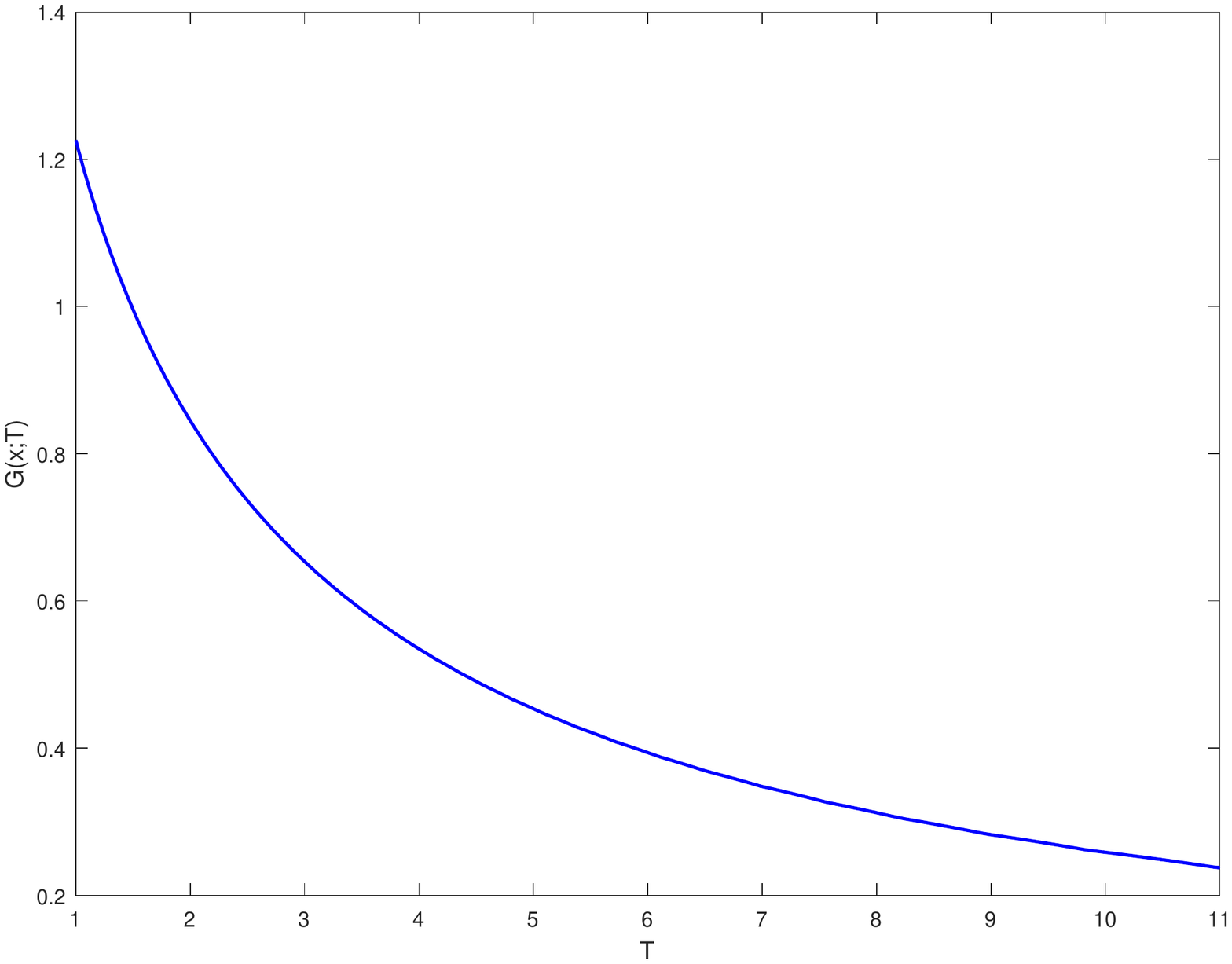}
%    \caption{Flower two.}
%  \end{minipage}
%\end{figure}

%It is clear that $I_{\tau}(x;T)$ is decreasing with respect to $T$.

\subsection{Example 2: Finite--horizon maximum of the queue length process in an infinite--server queue}

In this example, we use our large deviations results to study certain tail probabilities in an infinite--server queue in heavy traffic where the job arrival process is modeled by a Hawkes process with a large initial intensity. Such a queueing system could be relevant for analyzing the performance of large scale service systems with high--volume traffic which exhibits clustering.
For background on infinite--server queues, their engineering applications and related large deviation analysis, see, e.g., \cite{Glynn95, Whitt02, Blanchet14}. %and the references therein.

Consider a sequence of queueing systems indexed by $n$ with infinite number of servers.
Jobs arrive to the $n$-th system according to a Markovian Hawkes process $N^n$ with an initial intensity $\mu + n$, and an exciting function $\phi(t)=\alpha e^{-\beta t}$. We use $N^n$ to emphasize the dependence on $Z_0=n$.
For simplicity, we assume that (a) $n$ is large so the offered load in the system is high; (b) the system is initially empty; (c) the processing time of each job is deterministic given by a constant $c>0$.

We are interested in the finite--horizon maximum of queue length process in such an infinite--serve queue, similarly as in \cite{Blanchet14}. Mathematically, we want to develop large deviations approximations for the probability of the event
\begin{equation}\label{eq:tail}
\max_{0 \le s \le T} Q^n_s \ge n x
\end{equation}
for fixed $T>0$ and sufficiently large $x$, as $n \rightarrow \infty$. Here $Q^n_s$ is number of jobs (or busy servers) in the $n$-th system at time $s$. {For sufficiently large $x$, we note that \eqref{eq:tail} is a rare event.} This event corresponds precisely to the event of observing a loss
in a queue with $nx$ servers, no waiting room, and starting empty.

It is well known that (see, e.g., \cite{Glynn91}) for the $n$-th system with deterministic processing time $c$, the queue length process $Q^n$ can be represented by
\[Q^n_t = N^n_t - N^n_{t-c},\]
where $N^n_t=0$ if $t\le 0$ by convention.
It is easy to see that the function $\Phi$ mapping $y$ to $\tilde y$ where
\[\tilde y(t): = \max_{s \le t} \{y(s) -y({s-c})\},\]
is continuous under the uniform topology. Since Theorem~\ref{thm:2} states that $\mathbb{P}(\frac{1}{n} N^n\in\cdot)$ satisfies a sample path large deviation principle with the good rate function $I_N$, we can apply the contraction principle and obtain:
\begin{align}
&\lim_{n\rightarrow\infty}\frac{1}{n} \log\mathbb{P}\left(\max_{0 \le s \le T} Q^n_s \ge n x\right) \nonumber \\
&=\lim_{n\rightarrow\infty}\frac{1}{n} \log\mathbb{P}\left(\max_{0 \le s \le T} \frac{1}{n} [N^n_s -N^n_{s-c}] \ge  x\right)
\nonumber \\
&= -\inf_h \left\{I_N(h;T): \max_{s \le T} [ h(s) -h({s-c})]  \ge x \right\} ,\label{eq:inf-opt}
\end{align}
where we use the notation $I_N(h;T)$ to emphasize the dependence of $I_N$ on $T$, as can be clearly seen in \eqref{INRate}.

Therefore, to develop large deviations approximations for $\mathbb{P}\left(\max_{0 \le s \le T} Q^n(s) \ge n x\right)$,
it remains to solve the optimization problem in \eqref{eq:inf-opt}. For $T \le c,$ since $h$ is a nondecreasing function, then the infimum in \eqref{eq:inf-opt} is simply
\[\inf_{h(T) \ge x}I_{N}(h;T)=H(x;T).\]
For $T>c$, the infimum in \eqref{eq:inf-opt} is equivalent to:
\begin{equation*}
\min\left\{\inf_{0\leq s\leq c}\inf_{h:h(s)\geq x}I_{N}(h;s),\inf_{c\leq s\leq T}\inf_{h:h(s)-h(s-c)\geq x}I_{N}(h;s)\right\}.
\end{equation*}
Now, let us solve the optimization problem:
\begin{equation*}
\inf_{h:h(t)-h(t-c)\geq x}I_{N}(h;t).
\end{equation*}
Since
\begin{align*}
&\lim_{\epsilon\rightarrow 0}\lim_{n\rightarrow\infty}\frac{1}{n}\log\mathbb{P}(N^{ny}_t/n\in B_{\epsilon}(x)|Z_{0}=ny)
=-y H(x/y;t),
\\
&\lim_{\epsilon\rightarrow 0}\lim_{n\rightarrow\infty}\frac{1}{n}\log\mathbb{P}(Z^n_{t}/n\in B_{\epsilon}(y)|Z_{0}=n)
=-J(y;t),
\end{align*}
and by the Markov property, we get
\begin{align}\label{ContractionApply}
&\lim_{\epsilon\rightarrow 0}\lim_{n\rightarrow\infty}\frac{1}{n}\log\mathbb{P}
\left([N^n_t-N^n_{t-c}]/n\in B_{\epsilon}(x),Z^n_{t-c}/n\in B_{\epsilon}(y)|Z_{0}=n\right)
\\
&=-y H(x/y;c)-J(y;t-c),
\nonumber
\end{align}
and finally for sufficiently large $x$, by \eqref{ContractionApply} and  the contraction principle, we obtain
\begin{align*}
\inf_{h:h(t)-h(t-c)\geq x}I_{N}(h;t)
&=-\lim_{\epsilon\rightarrow 0}\lim_{n\rightarrow\infty}\frac{1}{n}\log\mathbb{P}
\left([N^n_t-N^n_{t-c}]/n\in B_{\epsilon}(x)|Z_{0}=n\right)
\\
&=\inf_{y>0}\left\{y H(x/y;c)+J(y;t-c)\right\}.
\nonumber
\end{align*}
Hence we conclude that the infimum in \eqref{eq:inf-opt} is equivalent to the following expression:
\begin{equation} \label{eq:G-rate}
G(x;T):=\min\left\{\inf_{0\leq s\leq c}H (x;s),\inf_{c\leq s\leq T}\inf_{y>0}\left\{y H(x/y;c)+J(y;s-c)\right\}\right\},
\end{equation}
where $H$ and $J$ are given in Theorem~1 and 2, respectively. This implies the following approximation for $T>c$ and sufficiently large $x$:
\begin{eqnarray*}
\mathbb{P}\left(\max_{0 \le s \le T} Q^n_s \ge n x\right) = \exp(-n \cdot G(x; T) + o(n)), \quad \text{as $n \rightarrow \infty.$}
\end{eqnarray*}
Since one can solve $H$ and $J$ numerically, we can then also obtain $G$ by solving the optimization problem in \eqref{eq:G-rate} numerically.  We present an example in Figure~\ref{fig:G}.

%\begin{figure}[h] \label{fig:2}
%    \centering
%        \includegraphics[width=0.75\textwidth]{fig2increasing.pdf}
%        \caption{This figure plots $G(x;T)$ in \eqref{eq:G-rate} as a function of $x \in [1.5,10]$. The other parameters are fixed and given by: $\alpha=\beta=1, c=1,$ and $T=5$.}
%\end{figure}

%It is clear that $G(x;T)$ is increasing with respect to $x$ for sufficiently large $x$.

%\gaox{what kind of numerical experiments are needed here? Plot x versus the minimum above?}
%\add{I think we can either plot the infimum against $x$ and/or against $T$ and/or against $c$. You decide!}

\begin{figure}[h]


\centering     %%% not \center
\subfigure[\text{$G(x;T)$ as a function of $x$. $T=5$ is fixed.}]{\includegraphics[width=.47\linewidth, height=5cm]{21}}
\subfigure[\text{$G(x;T)$ as a function of $T$. $x=5$ is fixed.}]{\includegraphics[width=.47\linewidth, height=4.9cm]{22}}
\caption{This figure plots $G(x;T)$ in \eqref{eq:G-rate}. We use parameters $\alpha=\beta =1, c=1$.}
\label{fig:G}
\end{figure}

%%%%%%%%%%%%%%%%%%%%%%%%%%%%%%%%%%%%%%%%%%%%%%%%%%%%%%%%%%%%%

%\newpage
\section{Proofs of Theorems~\ref{thm:1} and \ref{thm:2}}\label{ProofsSec}

This section collects the proofs of Theorems~\ref{thm:1} and \ref{thm:2}.

\subsection{Moment generating functions of $Z_t$ and $N_t$}\label{sec:MGF}
In this section we discuss the moment generating functions of $Z_t$ and $N_t$ for fixed $t$, conditioned on knowing the value of $Z_0$. These functions play a critical role in proving our large deviation results.

First, recall from \cite[Section~3.2.1]{GZ} the moment generating function of $Z_t$:
\begin{eqnarray} \label{eq:u}
u(t,z):=\mathbb{E}[e^{\theta Z_{t}}|Z_{0}=z] = e^{A(t; \theta) z + B(t; \theta)},
\end{eqnarray}
where
$A(t;\theta),B(t;\theta)$ satisfy the ODEs:
\begin{align}
&A'(t;\theta)=-\beta A(t;\theta)+e^{\alpha A(t;\theta)}-1, \label{eq:ODE-A}
\\
&B'(t;\theta)=\mu\left(e^{\alpha A(t;\theta)}-1\right), \label{eq:ODE-B}
\end{align}
with initial conditions $A(0;\theta)=\theta$ and $B(0;\theta)=0$. As remarked earlier, we have used
$A(t;\theta)$ instead of $A(t)$ to emphasize that $A$ takes value $\theta$ at time zero, and the derivative in \eqref{eq:ODE-A} is taken with respect to $t$. We also write
$B(t;\theta)$ instead of $B(t)$ to stress that $B$ depends on the initial condition of $A$.

%For any $\theta\in\mathbb{R}$, $u(t,z):=\mathbb{E}[e^{\theta Z_{t}}|Z_{0}=z]$ satisfies the Kolmogorov backward equation
%\begin{equation}
%\frac{\partial u}{\partial t}
%=-\beta z\frac{\partial u}{\partial z}+(\mu+z)[u(t,z+\alpha)-u(t,z)],
%\end{equation}
%with initial condition $u(0,z)=e^{\theta z}$. This is an affine process and $u(t,z)=e^{A(t;\theta)z+B(t;\theta)}$, Thus we have
%\begin{eqnarray} \label{eq:u}
%u(t,z):=\mathbb{E}[e^{\theta Z_{t}}|Z_{0}=z] = e^{A(t; \theta) z + B(t; \theta)}.
%\end{eqnarray}

Next, we compute the moment generating function of $N_t$. Recall that
$N_{t}=\frac{Z_{t}-Z_{0}}{\alpha}+\frac{\beta}{\alpha}\int_{0}^{t}Z_{s}ds$.
Thus, $\mathbb{E}[e^{\theta N_{t}}| Z_0 = z]=e^{-\frac{\theta}{\alpha} z} v(t,z)$, where
\[v(t,z):=\mathbb{E}[e^{\frac{\theta}{\alpha}Z_{t}+\frac{\theta\beta}{\alpha}\int_{0}^{t}Z_{s}ds} | Z_0 =z].\]
Recall that $Z$ is a Markov process with the infinitesimal generator
\begin{equation*}
\mathcal{A}f(z)=-\beta z\frac{\partial f}{\partial z}
+(z+\mu)[f(z+\alpha)-f(z)].
\end{equation*}
By Feynman-Kac formula, $v$ satisfies the equation:
\begin{equation*}
\frac{\partial v}{\partial t}
=-\beta z\frac{\partial v}{\partial z}
+(\mu+z)[v(t,z+\alpha)-v(t,z)]
+\frac{\theta\beta}{\alpha}z v(t,z),
\end{equation*}
with an initial condition $v(0,z)=e^{\frac{\theta}{\alpha}z}$.
Therefore, by the affine structure, see e.g. \cite{Errais}, one deduces that $v(t,z)=e^{C(t;\frac{\theta}{\alpha}) z+D(t;\frac{\theta}{\alpha})}$, where $C(t;\frac{\theta}{\alpha}), D(t;\frac{\theta}{\alpha})$ satisfy the ODEs:
\begin{align}
&C'\left(t;\frac{\theta}{\alpha}\right)=-\beta C\left(t;\frac{\theta}{\alpha}\right)+e^{\alpha C(t;\frac{\theta}{\alpha})}-1
+\beta \cdot C\left(0; \frac{\theta}{\alpha}\right), \label{eq:ODE-C}
\\
&D'\left(t;\frac{\theta}{\alpha}\right)=\mu\left(e^{\alpha C(t;\frac{\theta}{\alpha})}-1\right), \label{eq:ODE-D}
\end{align}
with initial conditions $C(0;\frac{\theta}{\alpha})=\frac{\theta}{\alpha}$ and $D\left(0;\frac{\theta}{\alpha}\right)=0$.
Thus we have
\begin{eqnarray} \label{eq:v}
\mathbb{E}[e^{\theta N_{t}}|Z_{0}=z] = \exp \left\{ \left(C\left(t; \frac{\theta}{\alpha}\right) -C\left(0;\frac{\theta}{\alpha}\right)\right) \cdot z
+ D\left(t; \frac{\theta}{\alpha}\right) \right\}.
\end{eqnarray}

Finally, we remark that there exists some $\Theta> 0$ such that the moment generating functions in \eqref{eq:u} and \eqref{eq:v} are both finite for all $\theta \le \Theta$. See \cite{ZhuCLT}.

\subsection{Proofs of Theorems~\ref{thm:1} and \ref{thm:2}}\label{proofof12}
We prove Theorems~\ref{thm:1} and \ref{thm:2} in this section. For notational convenience, unless specified explicitly, we use $Z$ and $N$ for $Z^{n}$ and $N^{n}$ when $Z_{0}=n$. We also use $\mathbb{E}[\cdot]$ to denote the conditional expectation $\mathbb{E}[\cdot|Z_0=n]$, and $\mathbb{P}(\cdot)$ for the conditional probability $\mathbb{P}(\cdot |Z_0=n)$.

%\subsubsection{Proofs of results in Section~\ref{sec:largeintensity}} \label{sec:proof1}
\begin{proof}[Proof of Theorem~\ref{thm:1}]
%For any $\theta\in\mathbb{R}$, $u(t,z):=\mathbb{E}[e^{\theta Z_{t}}|Z_{0}=z]$ satisfies
%\begin{equation}
%\frac{\partial u}{\partial t}
%=-\beta z\frac{\partial u}{\partial z}+(\mu+z)[u(t,z+\alpha)-u(t,z)],
%\end{equation}
%with initial condition $u(0,z)=e^{\theta z}$.
%
%This is an affine process and $u(t,z)=e^{A(t)z+B(t)}$, where
%$A(t),B(t)$ satisfy the ODEs:
%\begin{align}
%&A'(t)=-\beta A(t)+e^{\alpha A(t)}-1,
%\\
%&B'(t)=\mu\left(e^{\alpha A(t)}-1\right),
%\end{align}
%with initial conditions $A(0)=\theta$ and $B(0)=0$.
%
%Under the assumption $\beta>\alpha>0$, there exists a unique positive value $\theta_{c}$
%such that
%\begin{equation}
%-\beta\theta_{c}+e^{\alpha\theta_{c}}-1=0.
%\end{equation}
%If $A(0)=\theta=\theta_{c}$, then $A(t)=0$ for any $t$.
%If $A(0)=\theta<\theta_{c}$, then $A(t)\rightarrow 0$ as $t\rightarrow\infty$.
%If $A(0)=\theta>\theta_{c}$, then $A(t)\rightarrow\infty$ as $t\rightarrow t_{c}$, where
%\begin{equation}
%\int_{\theta}^{\theta_{c}}\frac{dA}{-\beta A+e^{\alpha A}-1}=t_{c}.
%\end{equation}
The proof is long, so we split it into four steps.

\textbf{Step 1}.
We first establish a scalar large deviation principle for $\mathbb{P}\left(\frac{1}{n}Z_{T}\in\cdot\right)$, using G\"{a}rtner-Ellis theorem.

From \eqref{eq:u} we have
\begin{eqnarray*}
u(t,z):=\mathbb{E}[e^{\theta Z_{t}}|Z_{0}=z] = e^{A(t; \theta) z + B(t; \theta)}.
\end{eqnarray*}
It is easy to see that since $Z_{t}$ process is positive, $u(t,z)$ is monotonically increasing in $\theta$.
Let us recall from Section~\ref{sec:MGF} that
$A(t;\theta),B(t;\theta)$ satisfy the ODEs:
\begin{align*}
&A'(t;\theta)=-\beta A(t;\theta)+e^{\alpha A(t;\theta)}-1,
\\
&B'(t;\theta)=\mu\left(e^{\alpha A(t;\theta)}-1\right),
\end{align*}
with initial conditions $A(0;\theta)=\theta$ and $B(0;\theta)=0$.

Let us first consider the critical and super-critical case, that is, $\alpha\geq\beta$.
When we have $\alpha\geq\beta$, for any $A>0$, $-\beta A+e^{\alpha A}-1>0$ and
thus $A(t;\theta)$ is increasing in $t$. It is clear that for any $\theta>0$,
$\int_{\theta}^{\infty}\frac{dA}{-\beta A+e^{\alpha A}-1}<\infty$.
On the other hand, it is easy to see that $\int_{0}^{\infty}\frac{dA}{-\beta A+e^{\alpha A}-1}=\infty$.
Therefore, for any fixed $T>0$, there exists a unique positive value $\theta_{c}(T)$ such that
\begin{equation}\label{thetacT}
\int_{\theta_{c}(T)}^{\infty}\frac{dA}{-\beta A+e^{\alpha A}-1}=T.
\end{equation}
Hence, we conclude that for any fixed $T>0$, for any $0<\theta<\theta_{c}(T)$,
$A(T;\theta)$ is the unique positive value greater than $\theta$, that satisfies the equation:
\begin{equation}\label{ATtheta}
\int_{\theta}^{A(T;\theta)}\frac{dA}{-\beta A+e^{\alpha A}-1}=T.
\end{equation}
Now let us consider the case $\theta\leq 0$ .
When $\alpha>\beta$, $-\beta A+e^{\alpha A}-1=0$ when $A=0$ or
when $A=A_{c}$, for some unique negative value $A_{c}$.
For $\theta=0$ or $\theta=A_{c}$, $A(t;\theta)=0$ for any $t$.
For $A_{c}<\theta<0$, $A(t;\theta)$ is decreasing in $t$ and $A(T;\theta)$
satisfies the equation \eqref{ATtheta}.
For $\theta<A_{c}$, $A(t;\theta)$ is increasing in $t$ and $A(T;\theta)<0$
and satisfies the equation \eqref{ATtheta}.
When $\alpha=\beta$, $-\beta A+e^{\alpha A}-1>0$ when $A \ne 0$. Thus, for any $\theta<0$,
$A(t;\theta)$ is increasing in $t$ and $A(T;\theta)<0$
and satisfies the equation \eqref{ATtheta}
and also $A(t;0)\equiv 0$.
Also, it is easy to see that for $\theta<\theta_{c}(T)$, $A(t;\theta)$ is continuous and finite in $t$, and
\begin{equation*}
B(T;\theta)=\mu\int_{0}^{T}(e^{\alpha A(t;\theta)}-1)dt
\end{equation*}
is finite.
Therefore, for $\theta<\theta_{c}(T)$
\begin{equation*}
\lim_{n\rightarrow\infty}\frac{1}{n}\log
\mathbb{E}[e^{\theta Z_{T}}]=A(T;\theta).
\end{equation*}
When $\theta\geq\theta_{c}(T)$, this limit is $\infty$.
By differentiating the equation \eqref{ATtheta} with respect to $\theta$, we get
\begin{equation}\label{ATthetaDerivative}
-\frac{1}{-\beta\theta+e^{\alpha\theta}-1}+\frac{1}{-\beta A(T;\theta)+e^{\alpha A(T;\theta)}-1}\frac{d}{d\theta}A(T;\theta)=0.
\end{equation}
It is clear from the equation \eqref{thetacT} and \eqref{ATtheta} that as $\theta\rightarrow\theta_{c}(T)$,
we have $A(T;\theta)\rightarrow\infty$. Therefore, from \eqref{ATthetaDerivative}, we get
\begin{equation*}
\frac{\partial}{\partial\theta}A(T;\theta)
=\frac{-\beta A(T;\theta)+e^{\alpha A(T;\theta)}-1}{-\beta\theta+e^{\alpha\theta}-1}\rightarrow\infty,
\qquad\text{as $\theta\rightarrow\theta_{c}(T)$}.
\end{equation*}
Hence, we verified the essential smoothness condition.
By G\"{a}rtner-Ellis theorem, $\mathbb{P}\left(\frac{1}{n}Z_{T}\in\cdot\right)$
satisfies a large deviation principle on $\mathbb{R}^{+}$ with the rate function
\begin{equation}\label{eq:JxT}
J(x;T)=\sup_{\theta\in\mathbb{R}}\left\{\theta x-A(T;\theta)\right\}.
\end{equation}

Next, let us consider the sub-critical case, that is, $\alpha<\beta$. In this case,
$-\beta A+e^{\alpha A}-1=0$ if and only if $A=0$ or $A=A_{c}$, where $A_{c}$ is a positive constant
and it is unique. For $\theta=0$ or $A_{c}$, $A(t;\theta)=0$ for any $t$.
For $\theta<0$, $A(t;\theta)$ is increasing in $t$, and $A(T,\theta)<0$
satisfies the equation \eqref{ATtheta}. For $0<\theta<A_{c}$, $A(t,\theta)$ is
decreasing in $t$ and satisfies the equation \eqref{ATtheta}.
For $\theta>A_{c}$, $A(t,\theta)$ is increasing in $t$.
For any fixed $T>0$, there exists a unique $\theta_{c}(T)>A_{c}$ satisfying
the equation \eqref{thetacT} so that for any $A_{c}<\theta<\theta_{c}(T)$, $A(T,\theta)$ is the unique
positive value greater than $\theta$ that satisfies the equation \eqref{ATtheta} and for $\theta\geq\theta_{c}(T)$, $A(T,\theta)=\infty$.
We can proceed similarly as before and prove that, $\mathbb{P}\left(\frac{1}{n}Z_{T}\in\cdot\right)$
satisfies a large deviation principle on $\mathbb{R}^{+}$ with the rate function
given in \eqref{eq:JxT}.

\textbf{Step 2}.
Next, we need to prove the exponential tightness before
we proceed to establish the sample path large deviation principle.
To be more precise, we will show that
\begin{equation}\label{ClaimI}
\limsup_{K\rightarrow\infty}\limsup_{n\rightarrow\infty}
\frac{1}{n}\log\mathbb{P}\left(\sup_{0\leq t\leq T}Z_{t}\geq nK\right)=-\infty,
\end{equation}
and for any $\delta>0$,
\begin{equation}\label{ClaimII}
\limsup_{\epsilon\rightarrow 0}\limsup_{n\rightarrow\infty}
\frac{1}{n}\log\mathbb{P}\left(\sup_{|t-s|\leq\epsilon,0\leq t,s\leq T}
|Z_{t}-Z_{s}|\geq\delta n\right)=-\infty.
\end{equation}
We will also show that for any $\eta>0$,
\begin{equation}\label{ClaimIII}
\limsup_{n\rightarrow\infty}
\frac{1}{n}\log\mathbb{P}\left(\sup_{0<t\leq T}|Z_{t}-Z_{t-}|\geq\eta n\right)=-\infty.
\end{equation}
The superexponential estimates \eqref{ClaimI} and \eqref{ClaimII}
will guarantee the exponential tightness on $D[0,T]$ equipped
with the Skorokhod topology, see e.g. Theorem 4.1. in Feng and Kurtz \cite{FK}.
Together with Step 3, it will prove
the large deviation principle for $\mathbb{P}\left(\left\{\frac{1}{n}Z_{t},0\leq t\leq T\right\}\in\cdot\right)$
on $D[0,T]$ equipped with Skorokhod topology.
Next, the equation \eqref{ClaimIII}, i.e. the so-called $C$-exponentially tightness, see
e.g. Definition 4.12. in \cite{FK}
strengthens the large deviation principle for $\mathbb{P}\left(\left\{\frac{1}{n}Z_{t},0\leq t\leq T\right\}\in\cdot\right)$
so that it holds on $D[0,T]$ equipped with uniform topology, see e.g. Theorem 4.14. in \cite{FK}.

Let us first prove \eqref{ClaimI}.
Notice first that $Z_{t}-Z_{0}\leq\alpha N_{t}$ and $Z_{0}=n$.
Therefore, for $K>1$,
\begin{align*}
\mathbb{P}\left(\sup_{0\leq t\leq T}Z_{t}\geq nK\right)
&=\mathbb{P}\left(\sup_{0\leq t\leq T}(Z_{t}-Z_{0})\geq n(K-1)\right)
\\
&\leq\mathbb{P}\left(\sup_{0\leq t\leq T}N_{t}\geq n\frac{K-1}{\alpha}\right)
\nonumber
\\
&=\mathbb{P}\left(\alpha N_{T}\geq n(K-1)\right) \nonumber \\
&\leq \mathbb{E}[e^{\theta N_{T}}]e^{-\theta(K-1)n / \alpha },
\nonumber
\end{align*}
where the last inequality follows from Chebychev's inequality. In conjunction with the moment generating function of $N_T$ in \eqref{eq:v}, we hence obtain
\begin{equation*}
\limsup_{n\rightarrow\infty}\frac{1}{n}\log \mathbb{P}\left(\sup_{0\leq t\leq T}Z_{t}\geq nK\right)
\leq C\left(T;\frac{\theta}{\alpha}\right)-\frac{\theta}{\alpha} -\frac{\theta(K-1)}{\alpha},
\end{equation*}
which goes to $-\infty$ as $K\rightarrow\infty$. Hence, we proved \eqref{ClaimI}.

Next, let us prove \eqref{ClaimII}.
Note that for $s<t$, $\alpha N(s,t]=Z_{t}-Z_{s}+\beta\int_{s}^{t}Z_{u}du$.
Thus, for $s<t$, we have
\begin{equation*}
|Z_{t}-Z_{s}|
\leq\alpha N(s,t]+\beta(t-s)\sup_{s\leq u\leq t}Z_{u}.
\end{equation*}
Therefore,
\begin{align*}
&\mathbb{P}\left(\sup_{|t-s|\leq\epsilon,0\leq t,s\leq T}
|Z_{t}-Z_{s}|\geq\delta n\right)
\\
&\leq
\mathbb{P}\left(\sup_{|t-s|\leq\epsilon,0\leq s\leq t\leq T}
\left(\alpha N(s,t]+\beta(t-s)\sup_{s\leq u\leq t}Z_{u}\right)\geq\delta n\right)
\nonumber
\\
&\leq
\mathbb{P}\left(\sup_{|t-s|\leq\epsilon,0\leq s\leq t\leq T}
\alpha N(s,t]\geq\frac{\delta}{2}n\right)
+\mathbb{P}\left(\sup_{|t-s|\leq\epsilon,0\leq s\leq t\leq T}
\beta(t-s)\sup_{s\leq u\leq t}Z_{u}\geq\frac{\delta}{2}n\right).
\nonumber
\end{align*}
Note that
\begin{align*}
\mathbb{P}\left(\sup_{|t-s|\leq\epsilon,0\leq s\leq t\leq T}
\beta(t-s)\sup_{s\leq u\leq t}Z_{u}\geq\frac{\delta}{2}n\right)
\leq
\mathbb{P}\left(\beta\epsilon\sup_{0\leq u\leq T}Z_{u}\geq\frac{\delta}{2}n\right).
\nonumber
\end{align*}
By \eqref{ClaimI}, we have
\begin{equation*}
\limsup_{\epsilon\rightarrow 0}
\limsup_{n\rightarrow\infty}
\frac{1}{n}\log\mathbb{P}\left(\beta\epsilon\sup_{0\leq u\leq T}Z_{u}\geq\frac{\delta}{2}n\right)=-\infty.
\end{equation*}
Next, notice that without loss of generality we can assume that $\frac{1}{\epsilon}\in\mathbb{N}$ and
\begin{align}\label{eq:bound-P}
\mathbb{P}\left(\sup_{|t-s|\leq\epsilon,0\leq s\leq t\leq T}
\alpha N(s,t]\geq\frac{\delta}{2}n\right)
&\leq\mathbb{P}\left(\exists 1\leq j\leq T/\epsilon:
\alpha N(t_{j-1},t_{j}]\geq\frac{\delta}{4}n\right)\nonumber
\\
&\leq\sum_{j=1}^{T/\epsilon}\mathbb{P}\left(\alpha N(t_{j-1},t_{j}]\geq\frac{\delta}{4}n\right),
\end{align}
where $0=t_{0}<t_{1}<\cdots<t_{T/\epsilon}=T$, where $t_{j}-t_{j-1}=\epsilon$ for any $j$.
In addition, note that for $\theta>0$,
\begin{align} \label{eq:bound-N}
\mathbb{E}\left[e^{\theta\alpha N(t_{j-1},t_{j}]}\right]
&=\mathbb{E}\left[\mathbb{E}\left[e^{\theta\alpha N(t_{j-1},t_{j}]}|Z_{t_{j-1}}\right]\right]
\\
&=\mathbb{E}\left[e^{-\theta Z_{t_{j-1}}}e^{C(t_{j}-t_{j-1};\theta)Z_{t_{j-1}}+D(t_{j}-t_{j-1};\theta)}\right]
\nonumber
\\
&=\exp\big({D(\epsilon; \theta)+A(t_{j-1};C(\epsilon;\theta)-\theta)n+B(t_{j-1};C(\epsilon;\theta)-\theta)}\big),
\nonumber
\end{align}
where we have used the moment generating functions of $Z_t$ and $N_t$ in Section~\ref{sec:MGF}.
Hence, using Chebychev's inequality and combining \eqref{eq:bound-P} and \eqref{eq:bound-N}, we find for fixed $\epsilon>0,$
\begin{eqnarray*}
\lefteqn{\limsup_{n\rightarrow\infty}\frac{1}{n}\log
\mathbb{P}\left(\sup_{|t-s|\leq\epsilon,0\leq s\leq t\leq T}
\alpha N(s,t]\geq\frac{\delta}{2}n\right)}\\
&\leq& \sup_{1\leq j\leq T/\epsilon}
\left\{A(t_{j-1};C(\epsilon;\theta)-\theta)-\theta\frac{\delta}{4}\right\}\\
& \le & \sup_{ 0 \le t \le T}
\left\{A(t;C(\epsilon;\theta)-\theta)\right\}-\theta\frac{\delta}{4}.
\end{eqnarray*}

So in order to prove \eqref{ClaimII}, what remains is to choose $\theta$ that depends on $\epsilon$
so that (i) $\theta\rightarrow\infty$
as $\epsilon\rightarrow 0$; (ii) $A(t;C(\epsilon;\theta)-\theta)$ is uniformly bounded for $t \in [0, T]$ and $\epsilon \rightarrow 0$.
To this end, let us define $y(t) := C(t; \theta) - C(0; \theta)=C(t; \theta) - \theta$. Then $y$ satisfies the ODE:
\begin{align*}
&y'(t)=-\beta y(t)+ e^{\alpha \theta}e^{\alpha y(t)}-1,
\\
&y(0) = 0.
\end{align*}
For $\theta>0$, we have $y'(0) = e^{\alpha \theta} -1 >0,$ which implies $y$ is increasing on $[0, \gamma]$ for some $\gamma>0$. This suggests that
\[0 < y'(t) \le e^{\alpha \theta}e^{\alpha y(t)}, \quad \text{for $t \in [0, \gamma]$}.\]
By Gronwall's inequality for nonlinear ODEs, we obtain
\begin{eqnarray} \label{eq:compare-xt}
0 \le y(t) \le -\frac{1}{\alpha} \cdot \log (1 - \alpha e^{\alpha\theta} t), \quad \text{for $t \in [0, \gamma]$}.
\end{eqnarray}
Let us set $\alpha e^{\alpha\theta}  = \frac{1}{\sqrt{\epsilon}}.$ Then it is clear that $\theta\rightarrow\infty$
as $\epsilon\rightarrow 0$.
In addition, we deduce from \eqref{eq:compare-xt} that for $\epsilon< \gamma,$
\begin{eqnarray} \label{eq:x-eps}
0 \le C(\epsilon;\theta)-\theta = y(\epsilon) \le -\frac{1}{\alpha} \cdot \log (1 - \sqrt{\epsilon}).
\end{eqnarray}

Next we show $\left\{A(t;C(\epsilon;\theta)-\theta)\right\}$ is uniformly bounded for $t \in [0, T]$ and $\epsilon \rightarrow 0$. When $\alpha < \beta$, it is clear that zero is
a stable solution for the ODE of $A$ in \eqref{eq:ODE-A}. Since $A(0;C(\epsilon;\theta)-\theta))= y(\epsilon) \rightarrow 0$ as $\epsilon \rightarrow 0$, so the stability of zero solution implies that when $\epsilon \rightarrow 0$, $\left\{A(t;C(\epsilon;\theta)-\theta)\right\}$ is uniformly small and thus uniformly bounded for all $t \ge 0$. When $\alpha \ge \beta$, since $A(0 ;C(\epsilon;\theta)-\theta)) = y(\epsilon) \ge 0$, one readily checks that $A$ is non--decreasing with respect to time $t$. Hence we obtain
\[\sup_{ 0 \le t \le T}
\left\{A(t;C(\epsilon;\theta)-\theta)\right\}= A(T; y(\epsilon)). \]
We have shown in Step 1 that $A(T; \bar\theta)$ is finite when $\bar \theta<\theta_c(T)$, and $A(T; \bar \theta)$ is continuous as a function of $\bar \theta$. Therefore we deduce from \eqref{eq:x-eps} that $A(T; y(\epsilon))$ is uniformly bounded for $\epsilon \rightarrow 0.$ Thus, we have proved \eqref{ClaimII}.

Finally, the claim in \eqref{ClaimIII} trivially holds since for any $0<t\leq T$, $|Z_{t-}-Z_{t}|=0$
or $\alpha$ with probability $1$.

\textbf{Step 3}.
Next, we establish the sample path large deviation principle.

For any $\epsilon>0$, let $B_{\epsilon}(x)$ denote the open ball centered at $x$ with radius $\epsilon$.
For any $0=:t_{0}<t_{1}<t_{2}<\cdots<t_{k-1}<t_{k}:=T$ and $x_{1},\ldots,x_{k}\in\mathbb{R}^{+}$,
by the Markov property of the process $Z$, we have
\begin{align*}
&\mathbb{P}\left(\frac{1}{n}Z_{t_{1}}\in B_{\epsilon}(x_{1}),\frac{1}{n}Z_{t_{2}}\in B_{\epsilon}(x_{2}),
\ldots,\frac{1}{n}Z_{t_{k}}\in B_{\epsilon}(x_{k})\right)
\\
&=\mathbb{P}\left(\frac{1}{n}Z_{t_{1}}\in B_{\epsilon}(x_{1})\right)
\mathbb{P}\left(\frac{1}{n}Z_{t_{2}}\in B_{\epsilon}(x_{2})\bigg|\frac{1}{n}Z_{t_{1}}\in B_{\epsilon}(x_{1})\right)
\nonumber
\\
&\qquad\qquad\qquad
\cdots
\mathbb{P}\left(\frac{1}{n}Z_{t_{k}}\in B_{\epsilon}(x_{k})\bigg|\frac{1}{n}Z_{t_{k-1}}\in B_{\epsilon}(x_{k-1})\right).
\nonumber
\end{align*}
Hence, we have
\begin{align*}
&\lim_{\epsilon\rightarrow 0}
\lim_{n\rightarrow\infty}\frac{1}{n}\log\mathbb{P}\left(\frac{1}{n}Z_{t_{1}}\in B_{\epsilon}(x_{1}),
\frac{1}{n}Z_{t_{2}}\in B_{\epsilon}(x_{2}),
\ldots,\frac{1}{n}Z_{t_{k}}\in B_{\epsilon}(x_{k})\right)
\\
&=-J(x_{1};t_{1})-x_{1}J\left(\frac{x_{2}}{x_{1}};t_{2}-t_{1}\right)
-\cdots-x_{k-1}J\left(\frac{x_{k}}{x_{k-1}};t_{k}-t_{k-1}\right),
\nonumber
\end{align*}
where $J$ is given in \eqref{eq:JxT}.
Hence, for any $g\in\mathcal{AC}_{1}[0,T]$,
\begin{align*}
&\lim_{\epsilon\rightarrow 0}
\lim_{n\rightarrow\infty}\frac{1}{n}\log\mathbb{P}\left(\frac{1}{n}Z_{t_{1}}\in B_{\epsilon}(g(t_{1})),
\frac{1}{n}Z_{t_{2}}\in B_{\epsilon}(g(t_{2})),
\ldots,\frac{1}{n}Z_{t_{k}}\in B_{\epsilon}(g(t_{k}))\right)
\\
&=-J(g(t_{1});t_{1})-g(t_{1})J\left(\frac{g(t_{2})}{g(t_{1})};t_{2}-t_{1}\right)
-\cdots-g(t_{k-1})J\left(\frac{g(t_{k})}{g(t_{k-1})};t_{k}-t_{k-1}\right).
\nonumber
\end{align*}
For any given positive $g\in\mathcal{AC}_{1}[0,T]$,
we have
\begin{align*}
&J\left(\frac{g(t_{j})}{g(t_{j-1})};t_{j}-t_{j-1}\right)
\\
&=\sup_{\theta\in\mathbb{R}}\left\{\theta\frac{g(t_{j})}{g(t_{j-1})}-A(t_{j}-t_{j-1};\theta)\right\}
\nonumber
\\
&=\sup_{\theta\in\mathbb{R}}\left\{\theta\left(1+\frac{g'(t_{j-1}^{\ast})}{g(t_{j-1})}(t_{j}-t_{j-1})\right)
-\theta-\int_{0}^{t_{j}-t_{j-1}}(-\beta A(s;\theta)+e^{\alpha A(s;\theta)}-1)ds\right\}
\nonumber
\\
&=(t_{j}-t_{j-1})\sup_{\theta\in\mathbb{R}}\left\{\theta\frac{g'(t_{j-1}^{\ast})}{g(t_{j-1})}
-\left(-\beta A(t_{j-1}^{\ast\ast};\theta)+e^{\alpha A(t_{j-1}^{\ast\ast};\theta)}-1\right)\right\},
\nonumber
\end{align*}
where $t_{j-1}^{\ast}\in[t_{j-1},t_{j}]$ is independent of $\theta$
and $t_{j-1}^{\ast\ast}\in[0,t_{j}-t_{j-1}]$ may depend on $\theta$.

It is easy to see that for any given positive $g\in\mathcal{AC}_{1}[0,T]$,
$\frac{g'(t_{j}^{\ast})}{g(t_{j-1})}$, is uniformly bounded in $j$. To see this,
notice that $g$ is positive and continuous so $\inf_{0\leq t\leq T}g(t)>0$, and since $g$ is absolutely continuous, $g'$ exists
almost surely {and we can assume that $g'$ exist for any $t_{j}^{\ast}$.}
And we can also see that $A(t_{j-1}^{\ast\ast};\theta)$ is uniformly bounded in $j$.
Therefore, there exists some constant $K$ that may depend on the given $g$,
such that, uniformly in $j$,
\begin{align*}
&\sup_{\theta\in\mathbb{R}}\left\{\theta\frac{g'(t_{j-1}^{\ast})}{g(t_{j-1})}
-\left(-\beta A(t_{j-1}^{\ast\ast};\theta)+e^{\alpha A(t_{j-1}^{\ast\ast};\theta)}-1\right)\right\}
\\
&=\sup_{|\theta|\leq K}\left\{\theta\frac{g'(t_{j-1}^{\ast})}{g(t_{j-1})}
-\left(-\beta A(t_{j-1}^{\ast\ast};\theta)+e^{\alpha A(t_{j-1}^{\ast\ast};\theta)}-1\right)\right\}.
\nonumber
\end{align*}
Therefore,
\begin{align*}
&\bigg|\sup_{\theta\in\mathbb{R}}\left\{\theta\frac{g'(t_{j-1}^{\ast})}{g(t_{j-1})}
-\left(-\beta A(t_{j-1}^{\ast\ast};\theta)+e^{\alpha A(t_{j-1}^{\ast\ast};\theta)}-1\right)\right\}
\\
&\qquad\qquad
-\sup_{\theta\in\mathbb{R}}\left\{\theta\frac{g'(t_{j-1}^{\ast})}{g(t_{j-1})}
-\left(-\beta\theta+e^{\alpha\theta}-1\right)\right\}\bigg|
\nonumber
\\
&\leq
\sup_{|\theta|\leq K}\sup_{0\leq t\leq t_{j}-t_{j-1}}\left|\left(-\beta A(t;\theta)+e^{\alpha A(t;\theta)}-1\right)
-\left(-\beta\theta+e^{\alpha\theta}-1\right)\right|\rightarrow 0,
\nonumber
\end{align*}
as $t_{j}-t_{j-1}\rightarrow 0$.
Hence, we conclude that
\begin{align*}
&\lim_{\epsilon\rightarrow 0}\lim_{n\rightarrow\infty}\frac{1}{n}\log\mathbb{P}\left(\frac{1}{n}Z_{t}\in B_{\epsilon}(g), 0\leq t\leq T\right)
\\
&=-\int_{0}^{T}g(t)\sup_{\theta\in\mathbb{R}}\left\{\theta\frac{g'(t)}{g(t)}
-(-\beta\theta+e^{\alpha\theta}-1)\right\}dt
\nonumber
\\
&=-\sup_{\theta(t):0\leq t\leq T}\int_{0}^{T}\left\{\theta(t)g'(t)
-(-\beta\theta(t)+e^{\alpha\theta(t)}-1)g(t)\right\}dt.
\nonumber
\end{align*}

Together with the superexponential estimates \eqref{ClaimI} and \eqref{ClaimII},
we have proved that, $\mathbb{P}\left(\left\{\frac{1}{n}Z_{t},0\leq t\leq T\right\}\in\cdot\right)$
satisfies a large deviation principle with the rate function
\begin{equation*}
I_Z(g)=\sup_{\theta(t):0\leq t\leq T}\int_{0}^{T}\left\{\theta(t)g'(t)
-(-\beta\theta(t)+e^{\alpha\theta(t)}-1)g(t)\right\}dt,
\end{equation*}
if $g\in\mathcal{AC}_{1}[0,T]$.
Note that the maximization problem
\begin{equation*}
\sup_{x}\left\{xg'-(-\beta x+e^{\alpha x}-1)g\right\}
\end{equation*}
has its maximum achieved at
$x=\frac{1}{\alpha}\log\left(\frac{\beta+\frac{g'}{g}}{\alpha}\right)$,
provided that $g'\geq-\beta g$. Otherwise, the maximum is $+\infty$.
Therefore, we conclude that
\begin{equation*}
I_Z(g)=\int_{0}^{T}\frac{\beta g(t)+g'(t)}{\alpha}\log\left(\frac{\beta g(t)+g'(t)}{\alpha g(t)}\right)
-\left(\frac{\beta g(t)+g'(t)}{\alpha}-g(t)\right)dt,
\end{equation*}
for any $g\in\mathcal{AC}_{1}[0,T]$ and $g'\geq-\beta g$ and $I_{Z}(g)=+\infty$ otherwise.

\textbf{Step 4}.
Finally let us show that the rate function $I_{Z}(g)$ is good.
That is, we need to show that for any fixed $m>0$, the level set
\begin{equation}
K_{m}:=\left\{g\in\mathcal{AC}_{1}[0,T]:I_{Z}(g)\leq m\right\}
\end{equation}
is compact.

Since $Z_{t}\geq Z_{0}e^{-\beta t}$, we have $g(t)\geq g(0)e^{-\beta t}=e^{-\beta t}$
for any $t$. Therefore, for any $g\in K_{m}$,
\begin{equation}
e^{-\beta T}\int_{0}^{T}\Lambda^{\ast}\left(\frac{\beta g(t)+g'(t)}{\alpha g(t)}\right)dt\leq m,
\end{equation}
where $\Lambda^{\ast}(x):=x\log x-x+1$ is strictly convex and non-negative.
Thus, for any $g\in K_{m}$,
\begin{equation}
\int_{0}^{T}\Lambda^{\ast}\left(\frac{\beta}{\alpha}+\frac{1}{\alpha}\frac{g'(t)}{g(t)}\right)dt
\leq me^{\beta T}.
\end{equation}
Let us define $f(t)=\frac{\beta}{\alpha}t+\frac{1}{\alpha}\log g(t)$. Then
$f(0)=0$ and $f'(t)=\frac{\beta}{\alpha}+\frac{1}{\alpha}\frac{g'(t)}{g(t)}$.
From the proof that the rate function for Mogulskii's theorem is good, see e.g. Page 183 in Dembo and Zeitouni \cite{Dembo},
it follows that the set
\begin{equation}
\left\{f\in\mathcal{AC}_{0}[0,T]:\int_{0}^{T}\Lambda^{\ast}\left(f'(t)\right)dt
\leq me^{\alpha T}\right\}
\end{equation}
is a bounded set of equicontinuous functions. Since $g(t)=e^{\alpha f(t)-\beta t}$,
it follows that the set $K_{m}$ is a bounded set of equicontinuous functions.
By Arzel\`{a}-Ascoli theorem, the set $K_{m}$ is compact. Hence, $I_{Z}(g)$ is a good rate function.
The proof is complete.
\end{proof}

%%%%%%%%%%%%%%%%%%%%%%%%%%%%%%%%%%%%%%%%%%%%%%%%%%%%%%%%%%%

\begin{proof}[Proof of Theorem~\ref{thm:2}]
We apply Theorem~\ref{thm:1} and the contraction principle. One then readily obtains from \eqref{eq:Z-N} that $\mathbb{P}\left(\left\{\frac{1}{n}N_{t},0\leq t\leq T\right\}\in\cdot\right)$
satisfies a large deviation principle with the good rate function
\begin{equation}\label{eq:Ih}
I_{N}(h)=
\inf_{h(t)=\frac{g(t)-1}{\alpha}+\frac{\beta}{\alpha}\int_{0}^{t}g(s)ds, 0\leq t\leq T}I_Z(g).
\end{equation}
Observe that differentiating the integral equation
$h(t)=\frac{g(t)-1}{\alpha}+\frac{\beta}{\alpha}\int_{0}^{t}g(s)ds$, we get
\begin{equation*}
h'(t)=\frac{1}{\alpha}g'(t)+\frac{\beta}{\alpha}g(t),
\end{equation*}
which is a first-order linear ODE for $g(t)$ with initial condition $g(0)=1$.
Thus, we can solve this ODE and get
\begin{equation*}
g(t)=e^{-\beta t}+e^{-\beta t}\int_{0}^{t}\alpha e^{\beta s}h'(s)ds.
\end{equation*}
Hence, we infer from \eqref{eq:Ih} and the expression of $I_Z(g)$ in \eqref{eq:Ig} that
\begin{align*}
I_{N}(h)
&=\int_{0}^{T}h'(t)\log\frac{h'(t)}{g(t)}-(h'(t)-g(t))dt
\\
&=\int_{0}^{T}h'(t)\log\left(\frac{h'(t)}{e^{-\beta t}+e^{-\beta t}\int_{0}^{t}\alpha e^{\beta s}h'(s)ds}\right)
\nonumber
\\
&\qquad\qquad\qquad
-\left(h'(t)-e^{-\beta t}-e^{-\beta t}\int_{0}^{t}\alpha e^{\beta s}h'(s)ds\right)dt. \nonumber
\end{align*}
Using this sample path large deviations result and applying the contraction principle, we can also obtain that, $\mathbb{P}(N_T/n\in\cdot)$ satisfies
a scalar large deviation principle on $\mathbb{R}^{+}$ with the good rate function
\begin{eqnarray} \label{eq:H}
H(x;T)&=&\inf_{h:h(T)=x}I_{N}(h).
\end{eqnarray}

Next, we prove that the rate function $H$ in \eqref{eq:H} can be equivalently given by \eqref{INxT2}. Recall the moment generating function of $N_t$ in \eqref{eq:v},
\begin{eqnarray*}
\mathbb{E}[e^{\theta N_{t}}|Z_{0}=n] = \exp \left\{ \left(C\left(t; \frac{\theta}{\alpha}\right) -\frac{\theta}{\alpha}\right) n
+ D\left(t; \frac{\theta}{\alpha}\right) \right\},
\end{eqnarray*}
where
\begin{equation*}
C'\left(t;\frac{\theta}{\alpha}\right)=-\beta C\left(t;\frac{\theta}{\alpha}\right)
+e^{\alpha C\left(t;\frac{\theta}{\alpha}\right)}-1+\frac{\beta\theta}{\alpha},
\qquad
C\left(0;\frac{\theta}{\alpha}\right)=\frac{\theta}{\alpha}.
\end{equation*}

Let us first consider the critical and super-critical case, that is, $\alpha\geq\beta$.
When we have $\alpha\geq\beta$, for any $C>0$ and $\theta>0$, $-\beta C+e^{\alpha C}-1+\frac{\beta\theta}{\alpha}>0$ and
thus $C(t;\frac{\theta}{\alpha})$ is increasing in $t$. It is clear that for any $\theta>0$,
$\int_{\frac{\theta}{\alpha}}^{\infty}\frac{dC}{-\beta C+e^{\alpha C}-1+\frac{\beta\theta}{\alpha}}<\infty$.
On the other hand, it is easy to see that $\int_{0}^{\infty}\frac{dC}{-\beta C+e^{\alpha C}-1}=\infty$.
Therefore, for any fixed $T>0$, there exists a unique positive value $\theta_{d}(T)$ such that
\begin{equation}\label{thetacTII}
\int_{\frac{\theta_{d}(T)}{\alpha}}^{\infty}\frac{dC}{-\beta C+e^{\alpha C}-1+\frac{\beta\theta_{d}(T)}{\alpha}}=T.
\end{equation}
Hence, we conclude that for any fixed $T>0$, for any $0<\theta<\theta_{d}(T)$,
$C(T;\frac{\theta}{\alpha})$ is the unique positive value greater than $\frac{\theta}{\alpha}$, that satisfies the equation:
\begin{equation}\label{CTtheta}
\int_{\frac{\theta}{\alpha}}^{C(T;\frac{\theta}{\alpha})}\frac{dC}{-\beta C+e^{\alpha C}-1+\frac{\beta\theta}{\alpha}}=T.
\end{equation}
The case for $\theta\leq 0$ is similar.
Also, it is easy to see that for $\theta<\theta_{d}(T)$, $C(t;\frac{\theta}{\alpha})$ is continuous and finite in $t$, and
\begin{equation*}
D\left(T;\frac{\theta}{\alpha}\right)=\mu\int_{0}^{T}(e^{\alpha C(t;\frac{\theta}{\alpha})}-1)dt
\end{equation*}
is finite.
Therefore, for $\theta<\theta_{d}(T)$,
\begin{equation*}
\lim_{n\rightarrow\infty}\frac{1}{n}\log
\mathbb{E}[e^{\theta N_{T}}]=C\left(T;\frac{\theta}{\alpha}\right)-\frac{\theta}{\alpha}.
\end{equation*}
When $\theta\geq\theta_{d}(T)$, this limit is $\infty$.
By differentiating the equation \eqref{CTtheta} with respect to $\theta$, we get
\begin{align}\label{CTthetaDerivative}
&-\frac{1}{e^{\theta}-1}
-\frac{\beta}{\alpha}
\int_{\frac{\theta}{\alpha}}^{C(T;\frac{\theta}{\alpha})}\frac{dC}{(-\beta C+e^{\alpha C}-1+\frac{\beta\theta}{\alpha})^{2}}
\\
&\qquad
+\frac{1}{-\beta C(T;\frac{\theta}{\alpha})+e^{\alpha C(T;\frac{\theta}{\alpha})}-1+\frac{\beta\theta}{\alpha}}
\frac{d}{d\theta}C\left(T;\frac{\theta}{\alpha}\right)=0.
\nonumber
\end{align}
It is clear from the equation \eqref{thetacTII} and \eqref{CTtheta} that as $\theta\rightarrow\theta_{d}(T)$,
we have $C(T;\frac{\theta}{\alpha})\rightarrow\infty$. Therefore, from \eqref{CTthetaDerivative}, we get
\begin{align*}
\frac{\partial}{\partial\theta}C
\left(T;\frac{\theta}{\alpha}\right)
&=\left(-\beta C\left(T;\frac{\theta}{\alpha}\right)+e^{\alpha C(T;\frac{\theta}{\alpha})}-1+\frac{\beta\theta}{\alpha}\right)
\\
&\qquad
\cdot\left(\frac{1}{e^{\theta}-1}
+\frac{\beta}{\alpha}
\int_{\frac{\theta}{\alpha}}^{C(T;\frac{\theta}{\alpha})}\frac{dC}{(-\beta C+e^{\alpha C}-1+\frac{\beta\theta}{\alpha})^{2}}\right)\rightarrow\infty,
\nonumber
\end{align*}
as $\theta\rightarrow\theta_{d}(T)$.
Hence, we verified the essential smoothness condition. By G\"{a}rtner-Ellis theorem, we get
the desired result. The proof for the sub--critical case is similar and is omitted here.
\end{proof}

%%%%%%%%%%%%%%%%%%%%%%%%%%%%%%%%%%%%%%%%%%%%%%%%%%%%%

\section*{Acknowledgements}
The authors are grateful to the editor and an anonymous referee
for their helpful suggestions that greatly improved the quality of the paper.
Xuefeng Gao acknowledges support from Hong Kong RGC ECS Grant 2191081
and CUHK Direct Grants for Research with project codes 4055035 and 4055054.
Lingjiong Zhu is grateful to the support from NSF Grant DMS-1613164.
We thank Xiang Zhou for help with the figures.

%%%%%%%%%%%%%%%%%%%%%%%%%%%%%%%%%%%%%%%%%%%%%%%%%%%%%%%%%%%%%%%%%%%%%%%%%%%%%%%%%%%%%%%%%%%%%%%%%%%%%%%%%%%%%%

\appendix

\section{Additional proofs}

We prove results in Sections~\ref{sec:path} and \ref{sec:largetime} in this appendix. For notational convenience, unless specified explicitly, we use $Z$ and $N$ for $Z^{n}$ and $N^{n}$ when $Z_{0}=n$. We also use $\mathbb{E}[\cdot]$ to denote the conditional expectation $\mathbb{E}[\cdot|Z_0=n]$, and $\mathbb{P}(\cdot)$ for the conditional probability $\mathbb{P}(\cdot |Z_0=n)$.

\subsection{Proofs of Propositions~\ref{prop:1} and \ref{prop:2}}
\begin{proof} [Proof of Proposition~\ref{prop:1}]
Let
\begin{equation*}
L(g,g'):=\frac{\beta g+g'}{\alpha}\log\left(\frac{\beta g+g'}{\alpha g}\right)
-\left(\frac{\beta g+g'}{\alpha}-g\right).
\end{equation*}
Then the variational problem \eqref{JxT1} becomes
\begin{equation*}
\inf_{g(0)=1,g(T)=x}\int_{0}^{T}L(g(t),g'(t))dt.
\end{equation*}
Applying the Euler-Lagrange equation $\frac{\partial L}{\partial g}-\frac{d}{dt}\frac{\partial L}{\partial g'}=0$, we deduce that
the optimal sample path $g_*$ satisfies the following equation:
\begin{equation}\label{Euler}
\frac{\beta}{\alpha}\log\left(\frac{\beta g+g'}{\alpha g}\right)
-\frac{\beta g+g'}{\alpha g}+1
-\frac{d}{dt}\left[\frac{1}{\alpha}\log\left(\frac{\beta g+g'}{\alpha g}\right)\right]=0.
\end{equation}
Define
\begin{eqnarray}\label{eq:qt}
q(t)=\frac{1}{\alpha}\log\left(\frac{\beta g_*(t)+g_*'(t)}{\alpha g_*(t)}\right).
\end{eqnarray}
Then the Equation \eqref{Euler} reduces to
\begin{equation}\label{eq:ODE-qt}
\frac{d}{dt}q(t)=\beta q(t)-(e^{\alpha q(t)}-1).
\end{equation}
Set $q(T) = \theta$, then we obtain from \eqref{eq:ODE-qt}, \eqref{eq:A-eq} and the uniqueness of ODE solutions that
\[q(t)= A(T-t; \theta), \quad \text{for $t \in [0, T]$}.\]
Note from \eqref{eq:qt} and $g_*(0)=1$, we have
\begin{eqnarray*}
g_*(t) = \exp\left( \int_{0}^t \alpha e^{\alpha q(s)}ds - \beta t \right).
\end{eqnarray*}
So what is remaining is to find the parameter $\theta$ such that $g_*(T) =x.$ We claim that the correct parameter $\theta$ is simply $\theta_*$, the maximizer to the optimization problem \eqref{JxT2}. To see this, we define $\gamma(t):= \frac{\partial}{ \partial \theta} A(t; \theta)$.
That is, $\gamma$ is the derivative of $A$ with respect to the initial condition. Then it follows from \eqref{eq:A-eq} and \cite[Chapter~V]{Hartman} that
\begin{eqnarray*}
\frac{d}{dt} \gamma(t)& = &(-\beta + \alpha e^{\alpha  A(t; \theta)}) \cdot \gamma(t),
\\
\gamma(0) &=&1,
\end{eqnarray*}
which immediately yields
\begin{eqnarray*}
\gamma(T) = \exp \left(\int_{0}^T (-\beta + \alpha e^{\alpha  A(t; \theta)}) dt \right) =g_*(T) =x.
\end{eqnarray*}
Now from \eqref{JxT2}, it is clear that
the optimal $\theta_*$ satisfies
\[x = \frac{\partial}{ \partial \theta} A(T; \theta)|_{\theta = \theta_*}.\]
Therefore, when $\theta = \theta_*,$ we have $g_*(T)=x$, and
the path $g_*$ solves the Euler-Lagrange equation \eqref{Euler}, which is a necessary condition for optimality.

It remains to check $g_*$ given by \eqref{eq:gstar} is indeed the optimal sample path for the variational problem \eqref{JxT1}.
It suffices to note that
\begin{align}
I_Z(g_*)&= \int_{0}^{T}L(g_*(t),g_*'(t))dt \nonumber\\
&=\int_{0}^{T}\left\{q(t)g_*'(t)
-(-\beta q(t)+e^{\alpha q(t)}-1)g_*(t)\right\}dt \nonumber
\\
&=\int_{0}^{T}\left\{q(t)g_*'(t)+q'(t)g_*(t)\right\}dt
\nonumber
\\
&=q(T) x-q(0)
\nonumber
\\
&=\theta_* \cdot x - A(T; \theta_*).
\nonumber \\
&=\sup_{\theta\in\mathbb{R}}\left\{\theta x-A(T;\theta)\right\},  \nonumber
\end{align}
where we have used the fact that $q(t)= A(T-t; \theta_*)$, {for $t \in [0, T]$}. Therefore, $g_*$ is indeed the optimal sample path. The proof is complete.
%We are now ready to verify that the two expressions \eqref{JxT1} and \eqref{JxT2} for the \add{good} rate function $J$ are equivalent.
\end{proof}

\begin{proof} [Proof of Proposition~\ref{prop:2}]
Let us recall that
\begin{equation*}
H(x;T)=\inf_{h(0)=0,h(T)=x}I_{N}(h),
\end{equation*}
where $I_N$ is given in \eqref{INRate}.
By considering $f(t)=\int_{0}^{t}\alpha e^{\beta s}h'(s)ds$, we get
\begin{eqnarray*}
\lefteqn{H(x;T)}\\
&=\inf_{f(0)=0,\int_{0}^{T}\frac{f'(t)}{\alpha e^{\beta t}}dt=x}
\int_{0}^{T}\frac{1}{\alpha}e^{-\beta t}\left[f'(t)\log\left(\frac{f'(t)}{\alpha+\alpha f(t)}\right)
-f'(t)+\alpha+\alpha f(t)\right]dt.
\end{eqnarray*}
This is a constrained optimization problem, so we introduce the Lagrange multiplier $\lambda$ and define
\begin{equation*}
L(t,f,f')=\frac{1}{\alpha}e^{-\beta t}\left[f'(t)\log\left(\frac{f'(t)}{\alpha+\alpha f(t)}\right)
-f'(t)+\alpha +\alpha f(t)\right]- \lambda\frac{f'(t)}{\alpha e^{\beta t}}.
\end{equation*}
We consider the modified problem
\begin{equation}\label{eq:modified}
\inf_{f(0)=0,\int_{0}^{T}\frac{f'(t)}{\alpha e^{\beta t}}dt=x} \int_{0}^{T} L(t,f(t),f'(t)) dt.
\end{equation}
The Euler-Lagrange equation $\frac{\partial L}{\partial f}-\frac{d}{dt}\frac{\partial L}{\partial f'}=0$ yields that the optimal $f_*$ for \eqref{eq:modified} satisfies
\begin{equation}\label{euler2}
\frac{1}{\alpha}e^{-\beta t}\left[\frac{-f_*'(t)}{1+f_*(t)}+\alpha\right]
-\frac{d}{dt}\frac{1}{\alpha}e^{-\beta t}\left[\log\left(\frac{f_*'(t)}{\alpha+\alpha f_*(t)}\right)-\lambda\right]=0.
\end{equation}
In addition, $f_*$ satisfies the following transversality condition:
\begin{eqnarray} \label{eq:boundary}
0=\frac{\partial L}{\partial f'} \bigg|_{t=T} = \frac{1}{\alpha}e^{-\beta T}\left[\log\left(\frac{f_*'(T)}{\alpha+\alpha f_*(T)}\right)-\lambda\right].
\end{eqnarray}
Let us define
\begin{equation}\label{eq:pt}
p(t)= \frac{1}{\alpha} \log\left(\frac{f_*'(t)}{\alpha+\alpha f_*(t)} \right).
\end{equation}
Then, the Equations~\eqref{euler2} and \eqref{eq:boundary} become
\begin{eqnarray*}
\frac{dp(t)}{dt}&=&\beta p(t) -e^{\alpha p(t)} + 1 - \frac{\beta \lambda}{\alpha},\\
p(T)& =& \frac{\lambda}{\alpha}.
\end{eqnarray*}
Hence, $p$ solves a first order ODE with terminal constraint. Comparing with \eqref{eq:C1} and \eqref{eq:C2}, we infer from the uniqueness of solutions of such ODEs that
\begin{eqnarray*}
p(t)= C\left(T-t; \frac{\lambda}{\alpha}\right).
\end{eqnarray*}
Note that we can deduce from \eqref{eq:pt} that
\begin{equation*}\label{fstarp}
f_*(t)=e^{\alpha\int_{0}^{t}e^{\alpha p(s)}ds}-1.
\end{equation*}
Recall that $f_*'(t)=\alpha e^{\beta t}h_*'(t)$ and $h_*(0)=0$. Thus, we get
\begin{equation}\label{eq:h}
h_*(t)=\int_{0}^{t}\frac{f_*'(s)}{\alpha e^{\beta s}}ds=\int_{0}^{t}e^{\alpha p(s)} \cdot e^{\alpha\int_{0}^{s}e^{\alpha p(u)}du-\beta s}ds,
\end{equation}
which depends on $\lambda$.

Next, we claim that, to satisfy the constraint $h_*(T)=x$, the correct Lagrange multiplier $\lambda$ is simply $\hat \theta_*$, the maximizer to the optimization problem \eqref{INxT2}.
To see this, we set
\begin{eqnarray*}
w_{\theta}(t) = C\left(t; \frac{\theta}{\alpha}\right) - \frac{\theta}{\alpha}, \quad \text{and} \quad r(t) = \frac{\partial}{ \partial \theta} w_{\theta}(t).
\end{eqnarray*}
One readily checks from \eqref{eq:C1} and \eqref{eq:C2} that $r$ solves the ODE
\begin{eqnarray*}
\frac{d}{dt} r(t) &=& \left( -\beta + \alpha \cdot e^{\alpha C(t; \frac{\theta}{\alpha})} \right) r(t)
+ \exp\left(\alpha C\left(t; \frac{\theta}{\alpha}\right)\right) ,\\
r(0)&=&0,
\end{eqnarray*}
which implies
\begin{eqnarray*}
r(T) = \int_{0}^T   e^{\alpha C(t; \frac{\theta}{\alpha})}\cdot \exp\left( \int_{t}^T \left( \alpha e^{\alpha C(s; \frac{\theta}{\alpha})} - \beta\right) ds   \right) dt.
\end{eqnarray*}
Since $\hat \theta_*$ is the maximizer to the optimization problem \eqref{INxT2}, we deduce that
\begin{eqnarray}\label{eq:x-rT}
x = r(T)|_{\theta=\hat \theta_* }= \int_{0}^T   e^{\alpha C(T-u; \frac{\hat \theta_*}{\alpha})}\cdot \exp\left( \int_{0}^u \left( \alpha e^{\alpha C(T-v; \frac{\hat \theta_*}{\alpha})} - \beta\right) dv   \right) du,
\end{eqnarray}
where we have applied the change of variable formula. Then it is clear from \eqref{eq:h} and \eqref{eq:x-rT} that when $p(t)= C(T-t; \frac{\hat \theta_*}{\alpha})$, we have $h_*(T)=x$.

Finally, we verify that $h_*$ given by \eqref{eq:hstar} is indeed optimal for the variational problem \eqref{INxT}.
It suffices to note that
\begin{align}
I_{N}(h_{*})
&=\hat{\theta}_{*}\cdot x+\int_{0}^{T}\frac{e^{-\beta t}}{\alpha}
\left[f'_{*}(t)\log\left(\frac{f'_{*}(t)}{\alpha+\alpha f_{*}(t)}\right)
-f'_{*}(t)+\alpha+\alpha f_{*}(t)-\hat{\theta}_{*}f'_{*}(t)\right]dt
\nonumber \\
&=\hat{\theta}_{*}\cdot x+
\int_{0}^{T}\frac{e^{-\beta t}}{\alpha}f'_{*}(t)\left[\log\left(\frac{f'_{*}(t)}{\alpha+\alpha f_{*}(t)}\right)-\hat{\theta}_{*}\right]dt
\nonumber
\\
&\qquad\qquad\qquad\qquad\qquad
+\int_{0}^{T}\frac{e^{-\beta t}}{\alpha}\left[\frac{-f'_{*}(t)}{1+f_{*}(t)}+\alpha\right](1+f_{*}(t))dt
\nonumber
\\
&=\hat{\theta}_{*}\cdot x+
\int_{0}^{T}\frac{d}{dt}(1+f_{*}(t))\frac{e^{-\beta t}}{\alpha}\left[\log\left(\frac{f'_{*}(t)}{\alpha+\alpha f_{*}(t)}\right)-\hat{\theta}_{*}\right]dt
\nonumber
\\
&\qquad\qquad\qquad\qquad\qquad
+\int_{0}^{T}\frac{d}{dt}\left(\frac{e^{-\beta t}}{\alpha}\left[\log\left(\frac{f'_{*}(t)}{\alpha+\alpha f_{*}(t)}\right)-\hat{\theta}_{*}\right]\right)(1+f_{*}(t))dt
\nonumber
\\
&=\hat{\theta}_{*} \cdot x
+(1+f_{*}(T))\frac{e^{-\beta T}}{\alpha}\left[\log\left(\frac{f'_{*}(T)}{\alpha+\alpha f_{*}(T)}\right)-\hat{\theta}_{*}\right]
\nonumber
\\
&\qquad\qquad\qquad\qquad\qquad
-(1+f_{*}(0))\frac{1}{\alpha}\left[\log\left(\frac{f'_{*}(0)}{\alpha+\alpha f_{*}(0)}\right)-\hat{\theta}_{*}\right],
\nonumber
\end{align}
where the first equality is due to $f_*'(t)=\alpha e^{\beta t}h_*'(t)$, $h_*(0)=0$ and $h_*(T)=x$, the third equality is due to \eqref{euler2}, and other equalities follow from direct computation. Now by \eqref{eq:boundary}, \eqref{eq:pt} and $f_{*}(0)=0$, and the fact that $p(t)=C(T-t;\frac{\hat{\theta}_{*}}{\alpha})$ for any $0\leq t\leq T$, we obtain
\begin{align}
I_{N}(h_{*})
&=\hat{\theta}_{*} \cdot x - p(0)+\frac{\hat{\theta}_{*}}{\alpha}
\nonumber
\\
&=\hat{\theta}_{*} \cdot x - C\left(T; \frac{\hat{\theta}_{*}}{\alpha}\right)+\frac{\hat{\theta}_{*}}{\alpha}
\nonumber
\\
&=\sup_{\theta\in\mathbb{R}}\left\{\theta x - C\left(T; \frac{\theta}{\alpha}\right)+\frac{\theta}{\alpha}\right\},
\nonumber
\end{align}
Therefore, $h_*$ is indeed the optimal sample path. The proof is complete.
\end{proof}

%%%%%%%%%%%%%%%%%%%%%%%%%%%%%%%%%%%%%%%%%%%%%%%%%%%%%%%%%%%%%%%%%%%%%%%%%%%%%%%%%%%%%%%%%%%

\subsection{Proofs of results in Section~\ref{sec:critical} } \label{sec:proof-critical}
\begin{proof}[Proof of Theorem~\ref{thm:critical}]
We want to apply G\"{a}rtner-Ellis theorem
to obtain the large deviations principle. Since the moment generating functions of $Z_t$ and $N_t$ involves nonlinear ODEs, the key idea in the proof is
to use Gronwall's inequality for nonlinear ODEs to obtain estimates for the ODE solutions. Below we prove part (i) and (ii) separately.

(i) We first prove part (i).
Suppose that we can show
\begin{eqnarray}\label{eq:z-lim-crit}
\lim_{n\rightarrow\infty}\frac{t_{n}}{n}\log\mathbb{E}\left[e^{\frac{\theta}{t_{n}}Z_{t_{n}T}}\right] =
\begin{cases}
\frac{\theta}{1-\frac{1}{2}\alpha^{2}\theta T} \quad \text{for any $\theta<\frac{2}{\alpha^{2}T}$, }\\
\infty \quad\qquad\quad \text{otherwise.}
\end{cases}
\end{eqnarray}
It is easy to check that $\frac{\theta}{1-\frac{1}{2}\alpha^{2}\theta T}$ is differentiable in $\theta$
for any $\theta<\frac{2}{\alpha^{2}T}$, and $\frac{\partial}{\partial\theta}\left[\frac{\theta}{1-\frac{1}{2}\alpha^{2}\theta T}\right]
=\frac{1}{(1-\frac{1}{2}\alpha^{2}\theta T)^{2}}\rightarrow\infty$ as $\theta\rightarrow\frac{2}{\alpha^{2}T}$. Thus,
we verified the essential smoothness condition.
By G\"{a}rtner-Ellis Theorem, $\mathbb{P}(\frac{Z_{t_{n}T}}{n}\in\cdot)$ satisfies
a large deviation principle with the speed $\frac{n}{t_{n}}$ and the good rate function
\begin{equation*}
\hat{I}_{Z}(x)=\sup_{\theta<\frac{2}{\alpha^{2}T}}\left\{\theta x-\frac{\theta}{1-\frac{1}{2}\alpha^{2}\theta T}\right\}
=\frac{2(\sqrt{x}-1)^{2}}{\alpha^{2}T},
\end{equation*}
if $x\geq 0$ and $+\infty$ otherwise.

Therefore, it suffices to prove \eqref{eq:z-lim-crit}. We focus on the case $\theta \ne 0$ since the proof is trivial for the case $\theta =0$.

From the moment generating function of $Z_t$ in \eqref{eq:u}, one readily obtains
%Recall that $\mathbb{E}[e^{\frac{\theta}{t_{n}}Z_{t_{n}T}}]
%=e^{nA(t_{n}T;\frac{\theta}{t_{n}})+B(t_{n}T;\frac{\theta}{t_{n}})}$, where
%$A(t;\theta),B(t;\theta)$ satisfy the ODEs:
%\begin{align}
%&A'(t)=-\alpha A(t)+e^{\alpha A(t)}-1, \label{eq:A-ODE}
%\\
%&B'(t)=\mu\left(e^{\alpha A(t)}-1\right), \label{eq:B-ODE}
%\end{align}
%with initial conditions $A(0)=\theta$ and $B(0)=0$. Then we obtain
\begin{eqnarray*}
\frac{t_{n}}{n}\log\mathbb{E}\left[e^{\frac{\theta}{t_{n}}Z_{t_{n}T}}\right] = t_n \cdot A\left(t_{n}T;\frac{\theta}{t_{n}}\right) + \frac{t_{n}}{n} \cdot B\left(t_{n}T;\frac{\theta}{t_{n}}\right),
\end{eqnarray*}
where $A$ are $B$ are solutions to the ODEs in \eqref{eq:ODE-A} and \eqref{eq:ODE-B}. Here the initial conditions are given by $A\left(0; \frac{\theta}{t_{n}}\right) = \frac{\theta}{t_{n}} $ and $B\left(0;\frac{\theta}{t_{n}}\right)=0$.
So in order to show \eqref{eq:z-lim-crit}, it suffices to show
\begin{eqnarray} \label{eq:lim-A}
\lim_{n \rightarrow \infty} t_n \cdot A\left(t_{n}T;\frac{\theta}{t_{n}}\right) =
\begin{cases}
\frac{\theta}{1-\frac{1}{2}\alpha^{2}\theta T}, & \text{for $\theta< \frac{2}{\alpha ^2 T}$},\\
+\infty, & \text{for $\theta \ge \frac{2}{\alpha ^2 T}$,}
\end{cases}
\end{eqnarray}
and
\begin{eqnarray} \label{eq:lim-B}
\begin{cases}
\lim_{n \rightarrow \infty} \frac{t_{n}}{n} \cdot B\left(t_{n}T;\frac{\theta}{t_{n}}\right) =
0, & \text{for $\theta< \frac{2}{\alpha ^2 T}$},\\
\liminf_{n \rightarrow \infty} \frac{t_{n}}{n} \cdot B\left(t_{n}T;\frac{\theta}{t_{n}}\right) \ge
0, & \text{for $\theta \ge \frac{2}{\alpha ^2 T}$.}
\end{cases}
\end{eqnarray}

We first prove \eqref{eq:lim-A}. The idea is to use Gronwall's inequality for nonlinear ODEs to obtain estimates for $A$.
Write $g(x) = e^{\alpha x} - \alpha x -1$. Then the ODE in \eqref{eq:ODE-A} becomes $A'(t) = g(A(t))$ in the critical case $\alpha=\beta$.
Given small $\epsilon, \eta>0$, there exists some $\delta>0$ such that $  (\frac{\alpha^2}{2} - \epsilon) x^2 \le g(x) \le  (\frac{\alpha^2}{2} + \epsilon) x^2 $ and $|g(x)| \le \eta |x|$
when $|x| \le \delta$. If we write
\[c_n= \sup \left\{t \ge 0: \left|A\left(s;\frac{\theta}{t_{n}}\right)\right| \le \delta, \quad\text{for all $s \le t$}\right\},\] then we obtain for $n$ large,
\begin{eqnarray} \label{eq:gronwall-1}
\left(\frac{\alpha^2}{2} - \epsilon\right) A^2\left(t;\frac{\theta}{t_{n}}\right) \le A'\left(t;\frac{\theta}{t_{n}}\right) \le \left(\frac{\alpha^2}{2} + \epsilon\right) A^2\left(t;\frac{\theta}{t_{n}}\right), \quad \text{for all $t \in [0, c_n]$}.
\end{eqnarray}
Note that the solution to the ODE
\begin{eqnarray*} \label{eq:y}
y'(t) &=& \left(\frac{\alpha^2}{2} + \epsilon\right) y^2(t), \\
y(0) &=& y_{0},
\end{eqnarray*}
is given by
\begin{equation} \label{eq:y-sol}
y(t) = \left( \frac{1}{y_0} - \left(\frac{\alpha^2}{2} + \epsilon\right) t \right)^{-1},
\end{equation}
which is clearly an non-decreasing function when $y$ is properly defined. We next discuss three cases to prove \eqref{eq:lim-A}.

\textbf{Case 1: $\theta<0$.}

When $\alpha = \beta$, it is clear from \eqref{eq:ODE-A} that $A$ is non-decreasing in $t$. So for $n$ large such that $0>A\left(0; \frac{\theta}{t_{n}}\right) = \frac{\theta}{t_{n}} > -\delta$, one readily checks that $0>A(t; \frac{\theta}{t_{n}})> -\delta$ for all $t \ge 0$.
That is, $c_n=+\infty$. Hence, we deduce from \eqref{eq:gronwall-1}--\eqref{eq:y-sol} and Gronwall's inequality for nonlinear ODEs that
\begin{eqnarray*}
\left( \frac{t_{n}}{\theta} - \left(\frac{\alpha^2}{2} - \epsilon\right) t \right)^{-1} \le A\left(t; \frac{\theta}{t_{n}}\right) \le \left( \frac{t_{n}}{\theta}  - \left(\frac{\alpha^2}{2} + \epsilon\right) t \right)^{-1} , \quad \text{for all $t \ge 0$}.
\end{eqnarray*}
This implies
\begin{align}
\left( \frac{1}{\theta}  - \left(\frac{\alpha^2}{2} - \epsilon\right) T \right)^{-1}
&\le \liminf_{n \rightarrow \infty} t_n \cdot A\left(t_{n}T;\frac{\theta}{t_{n}}\right) \nonumber
\\
&\le \limsup_{n \rightarrow \infty} t_n \cdot A\left(t_{n}T;\frac{\theta}{t_{n}}\right) \le \left( \frac{1}{\theta}  - \left(\frac{\alpha^2}{2} + \epsilon\right) T \right)^{-1} . \label{eq:liminfsupA}
\end{align}
Letting $\epsilon \rightarrow 0$, we obtain that for any $\theta< 0$,
\begin{eqnarray}\label{eq:limit-A}
\lim_{n \rightarrow \infty} t_n  A\left(t_{n}T;\frac{\theta}{t_{n}}\right) = \left( \frac{1}{\theta}  - \frac{\alpha^2}{2} T \right)^{-1} = \frac{\theta}{1-\frac{1}{2}\alpha^{2}\theta T} .
\end{eqnarray}

\textbf{Case 2: $0<\theta<\frac{2}{\alpha ^2 T}$.}

In this case, we have $\theta T ( \frac{\alpha^2}{2} + \epsilon ) < 1$ for $\epsilon$ small enough. This implies that given $y(0)=A(0;\frac{\theta}{t_{n}})= \frac{\theta}{t_n}$, the ODE solution $y$ in \eqref{eq:y-sol} is properly defined for $t= t_nT$, and its value is given by
\begin{eqnarray*}
y(t_n T) = \left( \frac{t_n}{\theta} - \left(\frac{\alpha^2}{2} + \epsilon\right) t_nT \right)^{-1} = \frac{1}{t_n} \cdot \frac{\theta}{1- \theta T (\frac{\alpha^2}{2} + \epsilon)}.
\end{eqnarray*}
Hence from the non-decreasing property of $y$, we have $0 <y(t) < \delta$ for all $t \in [0, t_n T]$ when $t_n$ is large. This implies $t_n T \le c_n$. Following the proof of Case 1, we can deduce from \eqref{eq:gronwall-1}--\eqref{eq:y-sol} and Gronwall's inequality for nonlinear ODEs that \eqref{eq:limit-A} holds for $0<\theta<\frac{2}{\alpha ^2 T}$ as well.

\textbf{Case 3: $\theta \ge \frac{2}{\alpha ^2 T}$.}

When $\theta \ge \frac{2}{\alpha ^2 T}$,  we prove \eqref{eq:lim-A} by contradiction. Suppose
\[\liminf_{n \rightarrow \infty} t_n \cdot A\left(t_{n}T;\frac{\theta}{t_{n}}\right) = M< +\infty. \]
Then there is a subsequence $\{n_k\}$ such that
\begin{eqnarray} \label{eq:contra-1}
t_{n_k}  \cdot A\left(t_{n_k}T;\frac{\theta}{t_{n_k}}\right) \le 2M, \quad \text{for $n_k$ large}.
\end{eqnarray}
This implies when $n_k$ is large, we have $0 < A\left(t_{n_k}T;\frac{\theta}{t_{n_k}}\right) \le \delta$, which further implies $c_{n_k} \ge t_{n_k}T.$ Similar as before, we can use \eqref{eq:gronwall-1} and apply
Gronwall's inequality for nonlinear ODEs and obtain that
\begin{eqnarray*}
A\left(t_{n_k}T;\frac{\theta}{t_{n_k}}\right) \ge \frac{1}{t_{n_k}} \cdot \frac{\theta}{1- \theta T (\frac{\alpha^2}{2} - \epsilon)} \ge \frac{1}{t_{n_k}} \cdot \frac{1}{\epsilon T},
\end{eqnarray*}
where the last inequality is due to the fact that $\theta \ge \frac{2}{\alpha ^2 T}$. However, this is a contradiction with \eqref{eq:contra-1} since we can choose $\epsilon$ arbitrarily small. Hence, for $\theta \ge \frac{2}{\alpha ^2 T}$, we have
\begin{eqnarray*}
\lim_{n \rightarrow \infty} t_n  A\left(t_{n}T;\frac{\theta}{t_{n}}\right) = +\infty .
\end{eqnarray*}
Therefore, we have proved \eqref{eq:lim-A}.

We next prove \eqref{eq:lim-B}. When $\theta < \frac{2}{\alpha ^2 T}$, we have shown that $t_n T \le c_n$ for $n$ large and thus $|A(t; \frac{\theta}{t_n})| \le \delta$ for all $t \in [0, t_n T]$.
together with the fact that $|e^{\alpha x} - 1 -\alpha x| \le \eta |x|$ when $|x| \le \delta$, we deduce from \eqref{eq:ODE-B}
that for $\theta< \frac{2}{\alpha ^2 T}$,
\begin{eqnarray*}
\left|B\left(t_{n}T;\frac{\theta}{t_n}\right)\right| & = & \mu \left|\int_{0}^{t_{n}T} \left(e^{\alpha A\left(s;\frac{\theta}{t_n}\right)} -1 \right) ds \right|\\
&\le& \mu (\alpha + \eta) \int_{0}^{t_{n}T} \left| A\left(s;\frac{\theta}{t_n}\right) \right| ds.
\end{eqnarray*}
In conjunction with the inequality \eqref{eq:liminfsupA}, it is readily verified that for $\theta< \frac{2}{\alpha ^2 T}$,
\begin{eqnarray*}
\lim_{n \rightarrow \infty} (t_n/n) \cdot B\left(t_{n}T;\frac{\theta}{t_{n}}\right) = 0 .
\end{eqnarray*}
For $\theta \ge\frac{2}{\alpha ^2 T}$, it is clear that $A$ is always positive for all $t$. Thus we infer from \eqref{eq:ODE-B} that $B$ is nonnegative and
\begin{eqnarray*}
\liminf_{n \rightarrow \infty} (t_n/n) \cdot B\left(t_{n}T;\frac{\theta}{t_{n}}\right) \ge 0 .
\end{eqnarray*}
Therefore, we have proved \eqref{eq:lim-B} and thus \eqref{eq:z-lim-crit} holds. The proof of part (i) is complete.

(ii) We next prove part (ii). Recall the moment generating function of $N_t$ in \eqref{eq:v}. Since $Z_0=n$, we infer that
\begin{equation}\label{eq:mgf-n-scaled}
\frac{t_{n}}{n}\log\mathbb{E}\left[e^{\frac{\theta}{t_{n}^{2}}N_{t_{n}T}}\right]
= \frac{t_{n}}{n} \left( -\frac{\theta}{\alpha t_{n}^{2}}n
+ C\left(t_{n}T;\frac{\theta}{\alpha t_{n}^{2}}\right)n+D\left(t_{n}T;\frac{\theta}{\alpha t_{n}^{2}}\right)\right),
\end{equation}
where $C, D$ solve the ODEs in \eqref{eq:ODE-C} and \eqref{eq:ODE-D}
with initial condition $C(0; \frac{\theta}{\alpha t_{n}^{2}})=\frac{\theta}{\alpha t_{n}^{2}} $ and $D(0;\frac{\theta}{\alpha t_{n}^{2}})=0$.

To study its limiting behavior as $n \rightarrow \infty$, we first prove
\begin{equation} \label{eq:limit-C}
\lim_{n\rightarrow\infty} t_n \cdot C\left(t_{n}T;\frac{\theta}{\alpha t_{n}^{2}}\right)
=
\begin{cases}
\frac{\sqrt{-\theta}}{\frac{\alpha}{\sqrt{2}}}\tanh\left(\frac{-\alpha}{\sqrt{2}}\sqrt{-\theta}T\right) &\text{if $\theta\leq 0$}
\\
\frac{\sqrt{\theta}}{\frac{\alpha}{\sqrt{2}}}\tan\left(\frac{\alpha}{\sqrt{2}}\sqrt{\theta}T\right)
&\text{if $\theta>0$}
\end{cases}.
\end{equation}
We focus on $\theta \ne 0$ since the case $\theta= 0$ is trivial to prove. Similar as in the proof of part (i), we obtain for $n$ large, and {for all $t \in [0, d_n]$}
\begin{align} \label{eq:compare1}
\left(\frac{\alpha^2}{2} - \epsilon\right) C^2\left(t;\frac{\theta}{ \alpha t^2_{n}}\right) + \alpha C\left(0;\frac{\theta}{ \alpha t^2_{n}}\right)
&\le C'\left(t;\frac{\theta}{ \alpha t^2_{n}}\right)
\\
&\le \left(\frac{\alpha^2}{2} + \epsilon\right) C^2\left(t;\frac{\theta}{ \alpha t^2_{n}}\right) + \alpha C\left(0;\frac{\theta}{ \alpha t^2_{n}}\right),\nonumber
\end{align}
where
\[d_n= \sup \left\{t \ge 0: \left|C\left(s;\frac{\theta}{ \alpha t^2_{n}}\right)\right| \le \delta, \quad\text{for all $s \le t$}\right\}.\]
We again want to use Gronwall's inequality for nonlinear ODEs to obtain estimates for $C$. To this end, we first study the solution $z_k$ to the Riccati equation
\begin{eqnarray}
z'(t) &=& k z^2(t) + \alpha z_0, \label{eq:z}\\
z(0) &=& z_0 = \frac{\theta}{ \alpha t^2_{n}} \label{eq:z0},
\end{eqnarray}
where we are interested in $k=\frac{\alpha^2}{2} + \epsilon$ and $k=\frac{\alpha^2}{2} -\epsilon$.
We discuss the cases $\theta>0$ and $\theta <0$ separately.

When $\theta>0$, the solution to the Riccati equation \eqref{eq:z} and \eqref{eq:z0} is given by
\begin{eqnarray*}
z_k(t) = \frac{1}{k} \cdot \frac{ \frac{\sqrt{k\theta}}{t_n} \sin\left(\frac{\sqrt{k\theta}}{t_n} t\right) + \frac{k\theta}{\alpha t_n^2} \cos\left(\frac{\sqrt{k\theta}}{t_n} t\right) }{\cos\left(\frac{\sqrt{k\theta}}{t_n} t\right) -  \frac{\sqrt{k\theta}}{t_n} \sin\left(\frac{\sqrt{k\theta}}{t_n} t\right)}.
\end{eqnarray*}
It follows from \eqref{eq:compare1} and Gronwall's inequality for nonlinear ODEs that
\begin{eqnarray} \label{eq:compare2}
z_{\frac{\alpha^2}{2} - \epsilon}(t) \le C\left(t;\frac{\theta}{ \alpha t^2_{n}}\right)
\le z_{\frac{\alpha^2}{2} + \epsilon}(t), \quad \text{for $t \in [0, d_n]$}.
\end{eqnarray}
For $k= \frac{\alpha^2}{2} + \epsilon$ or $k= \frac{\alpha^2}{2} - \epsilon$, it is clear that $|z_k(t_nT)|= O(t_n^{-1})$ as $t_n \rightarrow \infty$, from which one can verify that $t_n T \le d_n$. Together with \eqref{eq:compare2}, we obtain
\begin{equation*}
\limsup_{n\rightarrow\infty} t_n \cdot C\left(t_{n}T;\frac{\theta}{\alpha t_{n}^{2}}\right) \le  \lim_{n\rightarrow\infty} t_n \cdot z_{\frac{\alpha^2}{2} + \epsilon}(t_{n}T)
= \frac{\sqrt{\theta}}{\sqrt{\frac{\alpha^2}{2} + \epsilon}}\tan\left(\sqrt{\left(\frac{\alpha^2}{2} + \epsilon\right)\theta}T\right).
\end{equation*}
Setting $\epsilon \rightarrow 0$, we find
\begin{equation*}
\limsup_{n\rightarrow\infty} t_n \cdot C\left(t_{n}T;\frac{\theta}{\alpha t_{n}^{2}}\right) \le \frac{\sqrt{\theta}}{\frac{\alpha}{\sqrt{2}}}\tan\left(\frac{\alpha}{\sqrt{2}}\sqrt{\theta}T\right).
\end{equation*}
A similar argument leads to
\begin{equation*}
\liminf_{n\rightarrow\infty} t_n \cdot C\left(t_{n}T;\frac{\theta}{\alpha t_{n}^{2}}\right) \ge \frac{\sqrt{\theta}}{\frac{\alpha}{\sqrt{2}}}\tan\left(\frac{\alpha}{\sqrt{2}}\sqrt{\theta}T\right).
\end{equation*}
So we have proved \eqref{eq:limit-C} when $\theta>0$.

When $\theta<0$, we can similarly solve the Riccati equation \eqref{eq:z} and \eqref{eq:z0} and obtain
\begin{eqnarray*}
z_k(t) = -\frac{1}{k} \cdot  \frac{\sqrt{k|\theta|}}{t_n} \cdot \frac{ \exp\left( \frac{\sqrt{k|\theta|}}{t_n}\right) - r \cdot\exp\left( -\frac{\sqrt{k|\theta|}}{t_n}\right) }{\exp\left( \frac{\sqrt{k|\theta|}}{t_n}\right) + r \cdot\exp\left( -\frac{\sqrt{k|\theta|}}{t_n}\right)},
\end{eqnarray*}
where $r:= \frac{1- \sqrt{\frac{{k|\theta|}}{\alpha t_n} }}{ 1 + \sqrt{\frac{{k|\theta|}}{\alpha t_n} }}$. Thus we infer from \eqref{eq:compare2} that for $k=\frac{\alpha^2}{2} + \epsilon$,
\begin{equation*}
\limsup_{n\rightarrow\infty} t_n \cdot C\left(t_{n}T;\frac{\theta}{\alpha t_{n}^{2}}\right) \le  \lim_{n\rightarrow\infty} t_n \cdot z_k(t_{n}T)
= - \frac{\sqrt{|\theta|}}{\sqrt{k}}\tanh \left(\sqrt{k|\theta|}T\right).
\end{equation*}
Setting $\epsilon \rightarrow 0$, we find for $\theta<0$,
\begin{equation*}
\limsup_{n\rightarrow\infty} t_n \cdot C\left(t_{n}T;\frac{\theta}{\alpha t_{n}^{2}}\right) \le -\frac{\sqrt{|\theta|}}{\frac{\alpha}{\sqrt{2}}}\tanh \left(\frac{\alpha}{\sqrt{2}}\sqrt{|\theta|}T\right).
\end{equation*}
A similar argument leads to
\begin{equation*}
\liminf_{n\rightarrow\infty} t_n \cdot C\left(t_{n}T;\frac{\theta}{\alpha t_{n}^{2}}\right) \ge -\frac{\sqrt{|\theta|}}{\frac{\alpha}{\sqrt{2}}}\tanh \left(\frac{\alpha}{\sqrt{2}}\sqrt{|\theta|}T\right).
\end{equation*}
So we have proved \eqref{eq:limit-C} when $\theta<0$.

We next show
\begin{equation*} \label{eq:limit-D}
\lim_{n\rightarrow\infty} (t_n/n)  \cdot D\left(t_{n}T;\frac{\theta}{\alpha t_{n}^{2}}\right) =0.
\end{equation*}
Since we have shown $|C(t; \frac{\theta}{\alpha t^2_n })| \le \delta$ for all $t \in [0, t_n T]$,
together with the fact that $|e^{\alpha x} - 1 -\alpha x| \le \eta |x|$ when $|x| \le \delta$, we deduce that
\begin{eqnarray*}
\left|D \left(t_{n}T;\frac{\theta}{\alpha t^2_n } \right)\right| & = & \mu \left|\int_{0}^{t_{n}T} \left(e^{\alpha C\left(s;\frac{\theta}{\alpha t^2_n } \right)} -1 \right) ds \right|\nonumber\\
&\le& \mu (\alpha + \eta) \int_{0}^{t_{n}T} \left| C\left(s;\frac{\theta}{\alpha t^2_n } \right) \right| ds\nonumber\\
&=& \mu (\alpha + \eta) \int_{0}^{T} \left| t_n \cdot C\left(s t_n;\frac{\theta}{\alpha t^2_n } \right) \right| ds.\label{CDBound}
\end{eqnarray*}
Given \eqref{eq:limit-C} and \eqref{CDBound}, we obtain
\begin{eqnarray*}
\limsup_{n \rightarrow \infty}\left| D\left(t_{n}T;\frac{\theta}{\alpha t^2_n }\right)\right| \le K, \quad \text{for some constant $K<\infty$}.
\end{eqnarray*}
Since $t_n/n \rightarrow 0,$ we then deduce that
\begin{eqnarray*}
\lim_{n \rightarrow \infty} (t_n/n) \cdot D\left(t_{n}T;\frac{\theta}{\alpha t^2_n }\right) = 0 .
\end{eqnarray*}
Hence, we infer from \eqref{eq:mgf-n-scaled} that
\begin{equation*}\label{eq:2}
\Lambda(\theta):=\lim_{n\rightarrow\infty}\frac{t_{n}}{n}\log\mathbb{E}\left[e^{\frac{\theta}{t_{n}^{2}}N_{t_{n}T}}\right]
=
\begin{cases}
\frac{\sqrt{-\theta}}{\frac{\alpha}{\sqrt{2}}}\tanh\left(\frac{-\alpha}{\sqrt{2}}\sqrt{-\theta}T\right) &\text{if $\theta\leq 0$}
\\
\frac{\sqrt{\theta}}{\frac{\alpha}{\sqrt{2}}}\tan\left(\frac{\alpha}{\sqrt{2}}\sqrt{\theta}T\right)
&\text{if $\theta>0$}
\end{cases}.
\end{equation*}
It is easy to verify that $\Lambda(\theta)$ is differentiable in $\theta<0$ and
\begin{equation*}
\frac{d}{d\theta}\Lambda(\theta)=\frac{1}{2\sqrt{\theta}}\frac{1}{\frac{\alpha}{\sqrt{2}}}\tan\left(\frac{\alpha}{\sqrt{2}}\sqrt{\theta}T\right)
+\frac{T}{2}\sec^{2}\left(\frac{\alpha}{\sqrt{2}}\sqrt{\theta}T\right)
\qquad\text{for $\theta<0$},
\end{equation*}
and $\Lambda(\theta)$ is differentiable in $\theta>0$ and
\begin{equation*}
\frac{d}{d\theta}\Lambda(\theta)=\frac{-1}{2\sqrt{-\theta}}\frac{1}{\frac{\alpha}{\sqrt{2}}}\tanh\left(\frac{-\alpha}{\sqrt{2}}\sqrt{-\theta}T\right)
+\frac{T}{2}\mbox{sech}^{2}\left(\frac{-\alpha}{\sqrt{2}}\sqrt{-\theta}T\right)
\qquad\text{for $\theta>0$}.
\end{equation*}
Thus, it is easy to see that
\begin{equation*}
\Lambda'(0+)=\Lambda'(0-)=T,
\end{equation*}
which implies that $\Lambda(\theta)$ is also differentiable at $\theta=0$. An application of G\"{a}rtner-Ellis Theorem yields the result.
%
%Hence, by, $\mathbb{P}(\frac{N_{t_{n}T}}{nt_{n}}\in\cdot)$ satisfies
%a large deviation principle with the speed $\frac{n}{t_{n}}$ and the \add{good} rate function
%\begin{equation}
%\hat{I}_{N}(x)=\sup_{\theta\in\mathbb{R}}\left\{\theta x-\Lambda(\theta)\right\}.
%\end{equation}
%The proof is thus complete.
\end{proof}

%%%%%%%%%%%%%%%%%%%%%%%%%%%%%%%%%%%%%%%%%%%%%%%%%%%
\subsection{Proofs of results in Section~\ref{sec:supercritical}} \label{sec:proof-super-critical}
\begin{proof}[Proof of Theorem~\ref{thm:supercritical}]
The approach is similar as in the proof of Theorem~\ref{thm:critical}: use Gronwall's inequality for nonlinear ODEs to obtain estimates, and then apply G\"{a}rtner-Ellis theorem
to obtain the large deviations principle.

(i) We first prove part (i). We claim that for $Z_0=n$ and any $\theta\in\mathbb{R}$,
\[\lim_{n\rightarrow\infty}\frac{1}{n^{T}}\log\mathbb{E}\left[e^{\frac{\theta}{n}Z_{t_{n}T}}\right] =  \lim_{n\rightarrow\infty}\frac{1}{n^{T}}\cdot  \left({nA\left(t_{n}T;\frac{\theta}{n}\right)+B\left(t_{n}T;\frac{\theta}{n}\right)} \right) = \theta.\]

When $\theta=0$, the above holds trivially. So in the following we focus on $\theta \ne 0$.

We first show that
\[ \lim_{n\rightarrow\infty}n^{1-T} \cdot A\left(t_{n}T;\frac{\theta}{n}\right) = \theta .  \]
Write $g(x) = e^{\alpha x} - \alpha x -1$.
Then given any small $\eta>0$, there exists some $\delta>0$ and $K>0$ such that $|g(x)| \le \min\{\eta |x|, K x^2\}$ when $|x| < \delta$. Recall from \eqref{eq:ODE-A} that $A$ solves the ODE:
\begin{align}
&A'\left(t;\frac{\theta}{n}\right) =(\alpha - \beta)A\left(t;\frac{\theta}{n}\right)+ g\left(A\left(t;\frac{\theta}{n}\right)\right), \label{eq:ODE-A1}
\\
&A\left(0;\frac{\theta}{n}\right)= \frac{\theta}{n}. \nonumber
\end{align}
Suppose that for $n$ large, we have $\left|A\left(t; \frac{\theta}{n}\right)\right| \le \delta$ for all $t \in [0, t_n T]$. Then we obtain
\begin{eqnarray*}
(\alpha - \beta)A\left(t;\frac{\theta}{n}\right) - K A^2\left(t;\frac{\theta}{n}\right) \le A'\left(t;\frac{\theta}{n}\right) \le (\alpha - \beta)A\left(t;\frac{\theta}{n}\right) + K A^2\left(t;\frac{\theta}{n}\right).
\end{eqnarray*}
The solution to the Bernoulli equation
\begin{eqnarray*}
y'(t) &=& (\alpha - \beta)y(t) + K y^2(t), \\
y(0)&=& A(0) = \frac{\theta}{n},
\end{eqnarray*}
is given by
\[y(t) = \left( \left(\frac{1}{y(0)} + \frac{K}{\alpha - \beta}\right) \cdot e^{(\beta - \alpha) t} - \frac{K}{\alpha - \beta} \right)^{-1}.\]
Hence we have
\[y(t_n T) = \left( \frac{n^{1-T}}{\theta} + \frac{K}{\alpha - \beta} \cdot n^{-T} - \frac{K}{\alpha - \beta} \right)^{-1}.  \]
It is clear that $y$ is a monotone function, and we thus deduce that $|y(t)| \le \delta$ for all $t \in [0, t_n T]$ when $n$ is large.
Note that $L(y):=(\alpha - \beta)y + K y^2$ is Lipshitz continuous when $|y| \le \delta$.
Then by Gronwall's inequality for nonlinear ODEs, we obtain
\[A(t) \le y(t), \quad \text{for $t \in [0, t_n T]$},\]
which further implies that
\[ A(t_n T ) \le y(t_n T) = \left( \frac{n^{1-T}}{\theta} + \frac{K}{\alpha - \beta} \cdot n^{-T} - \frac{K}{\alpha - \beta} \right)^{-1}. \]
Similarly, we can find
\[ A(t_n T ) \ge  \left( \frac{n^{1-T}}{\theta} + \frac{-K}{\alpha - \beta} \cdot n^{-T} - \frac{-K}{\alpha - \beta} \right)^{-1} . \]
Thus we get
\[ \lim_{n\rightarrow\infty}n^{1-T} \cdot A\left(t_{n}T;\frac{\theta}{n}\right) = \theta .  \]

So the only remaining step is to show for $n$ large, we have $|A(t; \frac{\theta}{n})| \le \delta$ for all $t \in [0, t_n T]$.
To this end, we first define, with a slight abuse of notation (see also Section~\ref{sec:proof-critical})
\[c_n = \sup \left\{t \ge0 : \left|A \left(s; \frac{\theta}{n}\right)\right| \le \delta, \quad \text{for all $s\le t$} \right\},\]
and note that $\left|A\left(0; \frac{\theta}{n}\right)\right| =\left| \frac{\theta}{n}\right| < \delta $ for all large $n$. %Hence we have
%\[ \left|A\left(t; \frac{\theta}{n}\right)\right| \le \delta, \quad \text{for all $t \in [0, c_n]$}.  \]
So it suffices to show $t_n T \le c_n$.
We note from the ODE in \eqref{eq:ODE-A1} that
\[A\left(t; \frac{\theta}{n}\right) = A\left(0; \frac{\theta}{n}\right) \cdot e^{(\alpha - \beta)t}
+ \int_{0}^t e^{(\alpha - \beta) (t-s)} g\left(A\left(s; \frac{\theta}{n}\right)\right) ds. \]
%which further implies that
%\[ e^{-(\alpha - \beta)t} |A(t)| \le |A(0)|   +  \int_{0}^t e^{-(\alpha - \beta)s} |g(A(s))| ds.  \]
Note that $\left|A\left(t; \frac{\theta}{n}\right)\right| < \delta$ for all $ t \in [0, c_n]$. Then we have $g\left(A\left(t; \frac{\theta}{n}\right)\right) \le \eta \left|A(t; \frac{\theta}{n})\right|$ for all $t \in [0,c_n]$.
Hence we obtain for $t \in [0, c_n]$
\[ e^{-(\alpha - \beta)t} \left|A\left(t; \frac{\theta}{n}\right)\right| \le \left|A\left(0; \frac{\theta}{n}\right)\right|
+ \eta \int_{0}^t e^{-(\alpha - \beta)s} \left|A\left(s; \frac{\theta}{n}\right)\right| ds.  \]
Gronwall's inequality then implies
\begin{eqnarray} \label{eq:bound-A:2}
\left|A\left(t; \frac{\theta}{n}\right)\right| \le \left|A\left(0; \frac{\theta}{n}\right)\right| \cdot e^{(\alpha - \beta)t} \cdot e^{\eta t}, \quad \text{for $t \in [0,c_n]$}.
\end{eqnarray}
Now notice that for $A\left(0; \frac{\theta}{n}\right) = \theta/n$ and $t = t_nT$ where $T<1$,
the right--hand--side of the above inequality becomes
\[ |\theta| \cdot n^{T(1+ \frac{\eta}{\alpha - \beta} ) -1} \]
which is smaller than $\delta$ when $n$ is large and $\eta$ is set sufficiently small. Hence when $n$ is large, we have $t_nT \le c_n$ and thus $\left|A\left(t; \frac{\theta}{n}\right)\right| \le \delta$ for all $t \in [0, t_n T]$.

Next we prove
\begin{eqnarray} \label{eq:B}
\lim_{n\rightarrow\infty}\frac{1}{n^{T}}\cdot B\left(t_{n}T;\frac{\theta}{n}\right) = 0.
\end{eqnarray}
%Recall that $B$ solves the ODE:
%\begin{align}
%&B(0)=0,
%\\
%&B'(t)=\mu\left(e^{\alpha A(t)}-1\right),
%\end{align}
%which implies that
Recall from the ODE for $B$ that
\begin{equation*}
B\left(t_{n}T;\frac{\theta}{n}\right)  = \mu \int_{0}^{t_{n}T} \left(e^{\alpha A\left(s;\frac{\theta}{n}\right)} -1 \right) ds.
\end{equation*}
Note that for $n$ large, we have $|A(t; \frac{\theta}{n})| \le \delta$ for all $t \in [0, t_n T]$.
Together with the fact that $|e^{\alpha x} - 1 -\alpha x| \le \eta |x|$ when $|x| \le \delta$, we deduce that
\begin{eqnarray*}
\left|B\left(t_{n}T;\frac{\theta}{n}\right)\right| &\le& \mu (\alpha + \eta) \int_{0}^{t_{n}T} \left| A\left(s;\frac{\theta}{n}\right) \right| ds\\
& \le & \mu (\alpha + \eta) \cdot \frac{|\theta|}{n} \cdot  \int_{0}^{t_{n}T}  e^{(\alpha - \beta+ \eta)t} dt\\
& =&  \mu (\alpha + \eta) \cdot \frac{|\theta|}{n} \cdot \left( n^{T(1+ \frac{\eta}{\alpha - \beta} )} -1  \right).
\end{eqnarray*}
where in the second inequality we have used \eqref{eq:bound-A:2} and $A(0; \frac{\theta}{n})= \frac{\theta}{n}$.
Thus \eqref{eq:B} readily follows. So the proof of part (i) is complete after applying
the G\"{a}rtner-Ellis Theorem.%, and $\mathbb{P}(\frac{Z_{t_{n}T}}{n^{1+T}}\in\cdot)$ satisfies
%a large deviation principle with the speed $n^{T}$ and \add{good} rate function $\tilde{I}_{Z}(x)=0\cdot 1_{x=1}+\infty\cdot 1_{x\neq 1}$.

(ii) We next prove part (ii). The proof is similar to that of part (i), so we only outline the key steps.
Recall that
\begin{equation*}
\mathbb{E}\left[e^{\frac{\theta}{n}N_{t_{n}T}}\right]=e^{-\frac{\theta}{\alpha n}n}
e^{C(t_{n}T;\frac{\theta}{\alpha n})n+D(t_{n}T;\frac{\theta}{\alpha n})},
\end{equation*}
where $C, D$ solve the ODEs in \eqref{eq:ODE-C} and \eqref{eq:ODE-D}
with initial condition $C(0; \frac{\theta}{\alpha n})=\frac{\theta}{\alpha n} $ and $D(0;\frac{\theta}{\alpha n})=0$.

It suffices to show
\begin{eqnarray}
\lim_{n\rightarrow\infty}{n^{1-T}} C\left(t_{n}T;\frac{\theta}{\alpha n}\right) &= & \frac{1}{\alpha-\beta}\theta, \label{eq:C-lim}\\
\lim_{n\rightarrow\infty}{n^{-T}} D\left(t_{n}T;\frac{\theta}{\alpha n}\right) &= & 0, \label{eq:D-lim}
\end{eqnarray}
When $C(0; \frac{\theta}{n \alpha})= \frac{\theta}{n \alpha} $, we look at the Riccati equation below to obtain estimates for $C$:
\begin{align}
&z'(t)= (\alpha-\beta) z(t)+ K z^2(t)+ \frac{\beta\theta}{n \alpha},\label{eq:recatti-z}
\\
&z(0)= \frac{\theta}{n \alpha}. \nonumber
\end{align}
When $n$ is large, we have $(\alpha-\beta)^2 > 4 K \cdot \frac{\beta\theta}{n \alpha} $. This implies the constant function
\begin{eqnarray*}
\bar z := \frac{1}{2K} \left( \sqrt{(\alpha-\beta)^2 - 4 K \cdot \frac{\beta\theta}{n \alpha}} - (\alpha -\beta) \right)
\end{eqnarray*}
is a particular solution to the Riccati equation \eqref{eq:recatti-z}. In addition, an application of Taylor expansion yields
\begin{eqnarray*}
\bar z = \frac{-\beta}{\alpha -\beta} \cdot \frac{\theta}{n \alpha} + O(n^{-2}), \quad \text{as $n \rightarrow \infty.$}
\end{eqnarray*}
Write $v(t)=z(t)-\bar z$. Then it is clear that
\begin{eqnarray} \label{eq:v0}
v(0)= z(0)- \bar z = \frac{1}{\alpha - \beta} \cdot \frac{\theta}{n} + O(n^{-2}) .
\end{eqnarray}
Moreover, one readily verifies that $v$ satisfies the Bernoulli equation
\[v'(t)=(\alpha- \beta + 2K \bar z) v(t) + K v^2(t),\]
which implies that
\begin{eqnarray}\label{eq:vt}
v(t) = \left( \left(\frac{1}{v(0)} + \frac{K}{\alpha - \beta + 2K \bar z}\right) \cdot e^{(\beta - \alpha - 2K \bar z) t} - \frac{K}{\alpha - \beta + 2K \bar z} \right)^{-1}.
\end{eqnarray}
Suppose that for $n$ large, we have $|C(t; \frac{\theta}{\alpha n})| \le \delta$ for all $t \in [0, t_n T]$. Then we obtain from Gronwall's inequality for nonlinear ODEs that
\[C(t_nT) \le z(t_n T) =v(t_n T) + \bar z.  \]
Together with \eqref{eq:v0} and \eqref{eq:vt}, we deduce that
\begin{eqnarray*}
\limsup_{n\rightarrow\infty}{n^{1-T}} C\left(t_{n}T;\frac{\theta}{\alpha n}\right) \le \limsup_{n\rightarrow\infty}{n^{1-T}} (v(t_n T) + \bar z)=  \frac{1}{\alpha-\beta}\theta.
\end{eqnarray*}
A similar argument leads to
\begin{eqnarray*}
\liminf_{n\rightarrow\infty}{n^{1-T}} C\left(t_{n}T;\frac{\theta}{\alpha n}\right) \ge \frac{1}{\alpha-\beta}\theta.
\end{eqnarray*}
Hence the proof of \eqref{eq:C-lim} is complete if we can verify that $|C(t; \frac{\theta}{\alpha n})| \le \delta$ for all $t \in [0, t_n T]$. Similarly as before, this can be done using the facts that $|g(x)| \le \eta x$ for $x$ small and applying Gronwall's inequality to obtain the following bound on the function $C$:
\begin{eqnarray*} \label{eq:bound-C}
\left|C\left(t;\frac{\theta}{n \alpha}\right)\right| \le \left|C\left(0;\frac{\theta}{n \alpha}\right)\right|
\cdot \left(1+ \beta \int_{0}^t e^{(\beta-\alpha)s }ds\right) \cdot e^{(\alpha - \beta)t} \cdot e^{\eta t}, \quad \text{for $t \in [0,d_n]$}.
\end{eqnarray*}
where $d_n = \sup\{t \ge0 : |C(t; \frac{\theta}{\alpha n})| \le \delta, \quad \text{for all $s \le t$} \}$.
In addition,
the proof of \eqref{eq:D-lim} follows similarly as for the proof of \eqref{eq:B}.
The proof is complete after applying the G\"{a}rtner-Ellis Theorem.%, and
%$\mathbb{P}(\frac{N_{t_{n}T}}{n^{1+T}}\in\cdot)$ satisfies
%a large deviation principle with the speed $n^{T}$ and \add{good} rate function $\tilde{I}_{N}(x)={0\cdot }1_{x=\frac{1}{\alpha-\beta}}+\infty\cdot 1_{x\neq\frac{1}{\alpha-\beta}}$.
\end{proof}

%%%%%%%%%%%%%%%%%%%%%%%%%%%%%%%%%%%%%%%%%%%%%%%
\subsection{Proofs of results in Section~\ref{sec:subcritical}}\label{sec:proof-subcritical}
\begin{proof}[Proof of Theorem~\ref{ScalarN}]  %\add{This has been rewritten}
%Recall that
%$N_{t}=\frac{Z_{t}-Z_{0}}{\alpha}+\frac{\beta}{\alpha}\int_{0}^{t}Z_{s}ds$.
%Thus, $\mathbb{E}[e^{\theta N_{t}}]=e^{-\frac{\theta}{\alpha}n}u(t,z)$,
%where $u(t,z):=\mathbb{E}[e^{\frac{\theta}{\alpha}Z_{t}+\frac{\theta\beta}{\alpha}\int_{0}^{t}Z_{s}ds}]$ satisfies
%\begin{equation}
%\frac{\partial u}{\partial t}
%=-\beta z\frac{\partial u}{\partial z}
%+(\mu+z)[u(t,z+\alpha)-u(t,z)]
%+\frac{\theta\beta}{\alpha}zu(t,z),
%\end{equation}
%with initial condition $u(0,z)=e^{\frac{\theta}{\alpha}z}$.
%Therefore, $u(t,z)=e^{C(t)z+D(t)}$,
We apply G\"{a}rtner-Ellis theorem. The key idea is to study asymptotic behavior of the solutions of the ODEs \eqref{eq:ODE-C} and \eqref{eq:ODE-D} that characterize the moment generating function of $N_t$.

%From Section~\ref{sec:MGF} we know that
%\begin{eqnarray}
%\mathbb{E}[e^{\theta N_{t}}|Z_{0}=z] = \exp \left\{ \left(C\left(t; \frac{\theta}{\alpha}\right) -\frac{\theta}{\alpha}\right) z
%+ D\left(t; \frac{\theta}{\alpha}\right) \right\},
%\end{eqnarray}
%where $C(t;\theta),D(t;\theta)$ satisfy the ODEs:
%\begin{align}
%&C'(t;\theta)=-\beta C(t;\theta)+e^{\alpha C(t;\theta)}-1+\frac{\theta\beta}{\alpha},
%\\
%&D'(t;\theta)=\mu\left(e^{\alpha C(t;\theta)}-1\right) ,
%\end{align}
Recall from \eqref{eq:v} that for $Z_0=n$, we have
\begin{eqnarray}\label{eq:ntN}
\frac{1}{n}\log\mathbb{E}[e^{\theta N_{nT}}]
= C\left(nT; \frac{\theta}{\alpha}\right) - \frac{\theta}{\alpha} + \frac{1}{n} D\left(nT; \frac{\theta}{\alpha}\right),
\end{eqnarray}
with initial condition $C(0;\frac{\theta}{\alpha})=\frac{\theta}{\alpha}$ and $D(0;\frac{\theta}{\alpha})=0$.
Hence to study the limit of \eqref{eq:ntN} as $n\rightarrow\infty$ and then apply G\"{a}rtner-Ellis theorem, we need to look at the asymptotic behavior of the
ODE solutions $C$ and $D$ as $t \rightarrow \infty$.

To this end, let
\begin{equation*}
F(x):=-\beta x+e^{\alpha x}-1+\frac{\theta\beta}{\alpha}.
\end{equation*}
Then Equation~\eqref{eq:ODE-C} becomes $C'(t;\frac{\theta}{\alpha})=F(C(t;\frac{\theta}{\alpha}))$. It is clear that
$F$ is a convex function and $F(\pm\infty)=\infty$. In addition,
$F(x)$ achieves its minimum at $x=\frac{1}{\alpha}\log\frac{\beta}{\alpha}$ at which $F'(x)=0$
and
\begin{equation*}
F\left(\frac{1}{\alpha}\log\frac{\beta}{\alpha}\right)
=-\frac{\beta}{\alpha}\log\frac{\beta}{\alpha}+\frac{\beta}{\alpha}-1+\frac{\theta\beta}{\alpha}.
\end{equation*}
Thus, $\min_{x}F(x)\leq 0$ if and only if $\theta\leq\theta_{c}:=\frac{\alpha}{\beta}-\log\frac{\alpha}{\beta}-1$.
When $\theta\leq\theta_{c}$, since $\alpha< \beta$, one readily verifies that $F'(C(0;\frac{\theta}{\alpha})) <0$. Hence the ODE solution
$C(t;\frac{\theta}{\alpha})$ converges to the smaller solution $x^{\ast}(\theta)$ of the equation
\begin{equation*}
F(x)=-\beta x+e^{\alpha x}-1+\frac{\theta\beta}{\alpha}=0,
\end{equation*}
as $t\rightarrow\infty$. Note from \eqref{eq:ODE-C} and \eqref{eq:ODE-D} we have
\begin{equation}\label{eq:D-C}
D\left(t;\frac{\theta}{\alpha}\right)=\mu\cdot \left(C\left(t;\frac{\theta}{\alpha}\right)-C\left(0;\frac{\theta}{\alpha}\right)\right)
+\mu\beta\int_{0}^{t}C\left(s;\frac{\theta}{\alpha}\right)ds-\frac{\mu\theta\beta}{\alpha}t.
\end{equation}
Therefore, $\frac{D(t;\frac{\theta}{\alpha})}{t}\rightarrow\mu\beta x^{\ast}(\theta)-\frac{\mu\theta\beta}{\alpha}$
as $t\rightarrow\infty$ and for any $\theta\leq\theta_{c}$,
\begin{eqnarray*}
\lim_{n\rightarrow\infty}\frac{1}{n}\log\mathbb{E}[e^{\theta N_{nT}}]
&=& \lim_{n\rightarrow\infty} \left[ C\left(nT; \frac{\theta}{\alpha}\right) - \frac{\theta}{\alpha} + \frac{1}{n} D\left(nT; \frac{\theta}{\alpha}\right)   \right] \\
&=& -\frac{\theta}{\alpha}+x^{\ast}(\theta)+\left(\mu\beta x^{\ast}(\theta)-\frac{\mu\theta\beta}{\alpha}\right)T,
\end{eqnarray*}
and $\lim_{n\rightarrow\infty}\frac{1}{n}\log\mathbb{E}[e^{\theta N_{nT}}]=+\infty$ otherwise.
For $\theta=\theta_{c}$, we get $-\beta x^{\ast}(\theta_{c})+e^{\alpha x^{\ast}(\theta_{c})}-\frac{\beta}{\alpha}\log\frac{\alpha}{\beta}
-\frac{\beta}{\alpha}=0$, which implies that $x^{\ast}(\theta_{c})=\frac{\log(\beta/\alpha)}{\alpha}$.
By differentiating the equation $-\beta x^{\ast}(\theta)+e^{\alpha x^{\ast}(\theta)}-1+\frac{\theta\beta}{\alpha}=0$
with respect to $\theta$, we get
\begin{equation*}
\frac{d}{d\theta}x^{\ast}(\theta)=\frac{(\beta/\alpha)}{\beta-\alpha e^{\alpha x^{\ast}(\theta)}}\rightarrow\infty,
\qquad\text{as $\theta\rightarrow\theta_{c}$}.
\end{equation*}
Thus, we verified the essential smoothness condition.
By G\"{a}rtner-Ellis theorem,
$\mathbb{P}(\frac{N_{nT}}{n}\in\cdot)$ satisfies
a large deviation principle with the rate function
\begin{equation}\label{PlugVI}
I(x)=\sup_{\theta\in\mathbb{R}}\left\{\theta x
+\frac{\theta}{\alpha}-x^{\ast}(\theta)-\left(\mu\beta x^{\ast}(\theta)-\frac{\mu\theta\beta}{\alpha}\right)T\right\}.
\end{equation}

We next solve the optimization problem in \eqref{PlugVI} and simplify the rate function above.
At the optimal $\theta$ in \eqref{PlugVI}, we have
\begin{equation*}
x+\frac{1}{\alpha}-\frac{d}{d\theta}x^{\ast}(\theta)
-\mu\beta\frac{d}{d\theta}x^{\ast}(\theta)T+\frac{\mu\beta}{\alpha}T=0,
\end{equation*}
which implies that
\begin{equation}\label{PlugI}
\frac{d}{d\theta}x^{\ast}(\theta)
=\frac{x+\frac{1}{\alpha}+\frac{\mu\beta}{\alpha}T}{1+\mu\beta T}.
\end{equation}
Recall that $x^{\ast}(\theta)$ satisfies
\begin{equation}\label{PlugIII}
-\beta x^{\ast}(\theta)+e^{\alpha \cdot x^{\ast}(\theta)}-1+\frac{\theta\beta}{\alpha}=0.
\end{equation}
Differentiating with respect to $\theta$, we get
\begin{equation}\label{PlugII}
-\beta\frac{d}{d\theta}x^{\ast}(\theta)+\alpha\frac{d}{d\theta}x^{\ast}(\theta)e^{\alpha x^{\ast}}
+\frac{\beta}{\alpha}=0.
\end{equation}
Plugging \eqref{PlugI} into \eqref{PlugII}, we get
\begin{equation}\label{PlugV}
x^{\ast}(\theta)=\frac{1}{\alpha}\log\left(\frac{\beta x}{\alpha x+1+\mu\beta T}\right).
\end{equation}
Plugging this into \eqref{PlugIII}, we get the optimal $\theta$ in \eqref{PlugVI} is given by
\begin{equation}\label{PlugIV}
\theta=\log\left(\frac{\beta x}{\alpha x+1+\mu\beta T}\right)-\frac{\alpha x}{\alpha x+1+\mu\beta T}+\frac{\alpha}{\beta}.
\end{equation}
Finally, substituting \eqref{PlugV} and \eqref{PlugIV} into \eqref{PlugVI}, we get
\begin{equation*}
I(x)=x\log\left(\frac{\beta x}{\alpha x+1+\mu\beta T}\right)
-x+\frac{\alpha x+1+\mu\beta T}{\beta}.
\end{equation*}
The proof is therefore complete.
\end{proof}

%%%%%%%%%%%%%%%%%%%%%%%%%%%%%%%%%%%%%%%%%%%%%%%%%%%%%%%%%%%%%%%%%%%%%%%%%%%%%%%%%%%%%%%%%%%%%%%%%%%%%%%%
\begin{proof}[Proof of Theorem~\ref{ScalarN2}] The proof is similar to that of Theorem \ref{ScalarN}, so we only provide a sketch.
From the proof of Theorem \ref{ScalarN}, we have
\begin{equation*}
\mathbb{E}\left[e^{\theta N_{t_{n}T}}\right]
=e^{-\frac{\theta}{\alpha}n}e^{C(t_{n}T; \frac{\theta}{\alpha})n+D(t_{n}T;\frac{\theta}{\alpha})},
\end{equation*}
where for $\theta \le \theta_c:=\frac{\alpha}{\beta}-\log\frac{\alpha}{\beta}-1$, $\lim_{n\rightarrow\infty}C(t_{n}T;\frac{\theta}{\alpha})=x^{\ast}(\theta)$, the smaller solution to the equation
$-\beta x+e^{\alpha x}-1+\frac{\theta\beta}{\alpha}=0$. In addition, we infer from \eqref{eq:D-C} that
\begin{equation*}
D\left(t_{n}T; \frac{\theta}{\alpha} \right)=\mu\left(C\left(t_{n}T;\frac{\theta}{\alpha} \right)-\frac{\theta}{\alpha}\right)
+\mu\beta\int_{0}^{t_{n}T}C\left(s;\frac{\theta}{\alpha} \right)ds-\frac{\mu\theta\beta}{\alpha}t_{n}T.
\end{equation*}
Therefore, if $\lim_{n\rightarrow\infty}\frac{t_{n}}{n}=0$, we have for $\theta \le \theta_c$,
\begin{eqnarray*}
\lim_{n\rightarrow\infty}\frac{1}{n}\log\mathbb{E}\left[e^{\theta N_{t_{n}T}}\right]
&=& \lim_{n\rightarrow\infty} \left[ C\left(t_n T; \frac{\theta}{\alpha}\right) - \frac{\theta}{\alpha} + \frac{1}{n} D\left(t_nT; \frac{\theta}{\alpha}\right)   \right] \\
&=& -\frac{\theta}{\alpha}+x^{\ast}(\theta),
\end{eqnarray*}
and $\lim_{n\rightarrow\infty}\frac{1}{n}\log\mathbb{E}\left[e^{\theta N_{t_{n}T}}\right] = \infty$ otherwise.
Similarly, if $\lim_{n\rightarrow\infty}\frac{t_{n}}{n}=\infty$, we have for $\theta \le \theta_c$,
\begin{eqnarray*}
\lim_{n\rightarrow\infty}\frac{1}{t_{n}}\log\mathbb{E}\left[e^{\theta N_{t_{n}T}}\right]
&=& \lim_{n\rightarrow\infty} \left[ \frac{n}{t_n} \cdot \left(C\left(t_n T; \frac{\theta}{\alpha}\right) - \frac{\theta}{\alpha} \right) + \frac{1}{t_n} D\left(t_nT; \frac{\theta}{\alpha}\right)   \right] \\
&=& \mu\beta x^{\ast}(\theta)T-\frac{\mu\theta\beta}{\alpha}T,
\end{eqnarray*}
and $\lim_{n\rightarrow\infty}\frac{1}{t_{n}}\log\mathbb{E}\left[e^{\theta N_{t_{n}T}}\right] = \infty$ otherwise.
We can also check that $\frac{d}{d\theta}x^{\ast}(\theta)\rightarrow\infty$ as $\theta\rightarrow\theta_{c}$.
Therefore, by G\"{a}rtner-Ellis theorem and following the proof of Theorem \ref{ScalarN}, we have
proved the desired results.
\end{proof}

%%%%%%%%%%%%%%%%%%%%%%%%%%%%%%%%%%%%%%%%%%%

\begin{proof}[Proof of Theorem \ref{subZLDP}]
The proof is similar as the proof of Theorem \ref{thm:supercritical}, so we only provide a sketch.
From \eqref{eq:u} we have
\begin{equation*}
\mathbb{E}\left[e^{\frac{\theta}{n^{\gamma}}Z_{t_{n}T}}|Z_{0}=n\right] = e^{A(t_{n}T; \frac{\theta}{n^{\gamma}}) n + B(t_{n}T; \frac{\theta}{n^{\gamma}})}.
\end{equation*}
Similar as in the proof of Theorem \ref{thm:supercritical}, we can show that
\begin{equation*}
\lim_{n\rightarrow\infty}\frac{1}{n^{1-\gamma-T}}\log\mathbb{E}[e^{\frac{\theta}{n^{\gamma}}Z_{t_{n}T}}|Z_{0}=n]=\theta.
\end{equation*}
The result then follows from G\"{a}rtner-Ellis theorem.
\end{proof}

\end{document}